\newcommand{\func}{\mathsf{func}}
\newcommand{\pMC}{\mathsf{pMC}}
\newcommand{\fpt}{\mathsf{fpt}}
\newcommand{\Cfactor}{\mathsf{pF}[\mathsf{C}]}
\newcommand{\Cfactorleq}{\mathsf{pF}_{\leq}[\mathsf{C}]}
\newcommand{\Cks}{\mathsf{pKS}[\mathsf{C}]}
\newcommand{\Cksleq}{\mathsf{pKS}_{\leq}[\mathsf{C}]}
\newcommand{\Csss}{\mathsf{pSSS}[\mathsf{C}]}
\newcommand{\Csssr}{\mathsf{pSSS}^{\neq}[\mathsf{C}]}
\newcommand{\Csssleq}{\mathsf{pSSS}_{\leq}[\mathsf{C}]}
\newcommand{\Csssrleq}{\mathsf{pSSS}_{\leq}^{\neq}[\mathsf{C}]}
\newcommand{\tmfactor}{\mathsf{pF}[\mathsf{T}_*]}
\newcommand{\lfpt}{\leq^{\fpt}}
\newcommand{\sgfactor}{\mathsf{pF}[\mathsf{S}_*]}
\newcommand{\sgfactorleq}{\mathsf{pF}_{\leq}[\mathsf{S}_*]}
\newcommand{\sgks}{\mathsf{pKS}[\mathsf{S}_*]}
\newcommand{\sgsss}{\mathsf{pSSS}[\mathsf{S}_*]}
\newcommand{\sgsssr}{\mathsf{pSSS}^{\neq}[\mathsf{S}_*]}
\newcommand{\cpgfactor}{\mathsf{pF}[\mathsf{CycPG}]}
\newcommand{\cpgksleq}{\mathsf{pKS}_{\leq}[\mathsf{CycPG}]}
\newcommand{\cpgsss}{\mathsf{pSSS}[\mathsf{CycPG}]}
\newcommand{\fcgks}{\mathsf{pKS}[\mathsf{FCycG}]}
\newcommand{\fcgksleq}{\mathsf{pKS}_{\leq}[\mathsf{FCycG}]}
\newcommand{\fcgsss}{\mathsf{pSSS}[\mathsf{FCycG}]}
\newcommand{\apgks}{\mathsf{pKS}[\mathsf{AbPG}]}
\newcommand{\apgsss}{\mathsf{pSSS}[\mathsf{AbPG}]}
\newcommand{\afgks}{\mathsf{pKS}[\mathsf{FAbG}]}
\newcommand{\afgsss}{\mathsf{pSSS}[\mathsf{FAbG}]}
\newcommand{\Zsss}{\mathsf{pSSS}[\integers]}
\newcommand{\Zksleq}{\mathsf{pKS}_{\leq}[\integers]}
\newcommand{\Zsssr}{\mathsf{pSSS}^{\neq}[\integers]}
\newcommand{\Zsssleq}{\mathsf{pSSS}_{\leq}[\integers]}
\newcommand{\Znks}{\mathsf{pKS}[\integers^*]}
\newcommand{\Znksleq}{\mathsf{pKS}_{\leq}[\integers^*]}
\newcommand{\Znsss}{\mathsf{pSSS}[\integers^*]}
\newcommand{\Znsssr}{\mathsf{pSSS}^{\neq}[\integers^*]}
\newcommand{\Nks}{\mathsf{pKS}[\naturals]}
\newcommand{\Nksleq}{\mathsf{pKS}_{\leq}[\naturals]}
\newcommand{\Nsss}{\mathsf{pSSS}[\naturals]}
\newcommand{\Nsssr}{\mathsf{pSSS}^{\neq}[\naturals]}
\newcommand{\Nsssleq}{\mathsf{pSSS}_{\leq}[\naturals]}
\newcommand{\Nsssleqr}{\mathsf{pSSS}_{\leq}^{\neq}[\naturals]}
\newcommand{\Nnsss}{\mathsf{pSSS}[\naturals^*]}
\newcommand{\Nnsssr}{\mathsf{pSSS}^{\neq}[\naturals^*]}
\newcommand{\Nnks}{\mathsf{pKS}[\naturals^*]}
\theoremstyle{plain}
\newtheorem{theorem}{Theorem}[section]
\newtheorem{problem}[theorem]{Problem}
\newtheorem{definition}[theorem]{Definition}
\newtheorem{remark}[theorem]{Remark}
\newtheorem{lemma}[theorem]{Lemma}
\newtheorem{corollary}[theorem]{Corollary}
\newtheorem{newclaim}[theorem]{Claim}
\DeclareMathOperator\ord{ord}
\DeclareMathOperator\lcm{lcm}
\DeclareMathOperator\poly{poly}
\newcommand{\llbracket}{[\![}
\newcommand{\rrbracket}{]\!]}
\author{Markus Lohrey \and Andreas Rosowski\thanks{Supported by the DFG research project LO 748/12-2.}}
\title{Parameterized complexity of factorization problems}
\affiliation{University of Siegen, Department ETI, Siegen, Germany}
\keywords{parameterized complexity, knapsack problems, change-making problems}
\begin{document}

\maketitle
\begin{abstract}
We study the parameterized complexity of the following factorization problem:
given elements  $\pi,\pi_1, \ldots, \pi_m$ of a monoid and a parameter $k$,
can $\pi$ be written as the product of at most (or exactly) $k$ elements from $\pi_1, \ldots, \pi_m$?
Also, two restricted variants of this problem (subset sum and knapsack) are considered.
For these problems we show several new upper complexity bounds and fpt-equivalences 
for various classes of monoids.
Finally, some new upper complexity bounds for variants of the parameterized change-making problems are shown.
\end{abstract}

\section{Introduction}

This paper deals with the parameterized complexity of factorization problems in monoids. In the most generic setting,
we have a fixed class $\mathsf{C}$ of monoids. The input consists of a monoid $M \in \mathsf{C}$, elements $a, a_1,\dots,a_m \in M$ and
a positive integer $k$. The question is whether $a$ can be written as a product of at most $k$ elements from the list $a_1,\dots,a_m$ (every
$a_i$ may occur several times in this product). In the parameterized setting, $k$ is the parameter. This problem has been studied most intensively
for the case, where $\mathsf{C}$ consists of the class of all symmetric groups $S_n$ ($S_n$ is the group of all permutations on the sets $\{1,2,\ldots,n\}$). 
The non-parameterized variant of this problem has been introduced by Even and Goldreich under the name MGS (for minimum generator sequence) \cite{EvenG81}, where the problem
was shown to be ${\sf NP}$-complete if $k$ is given in unary notation. This results even holds for the case, when $\mathsf{C}$ is the class of abelian groups
$\integers_2^n$ for $n \geq 1$. For the case, where the factorization length $k$ is given in binary notation, Jerrum \cite{Jerrum85} proved that MGS is {\sf PSPACE}-complete.

Let us now come to the parameterized factorization problem for permutations, where the parameter is the factorization length $k$. We denote this problem
by $\sgfactor$ (the $*$ indicates that the degree $n$ of the symmetric group $S_n$ is part of the input).
It was shown by \cite[Chapter 11]{DoFe99}\footnote{In \cite{DoFe99} the problem $\sgfactor$ is called
\textsc{Permutation Group Factorization}.}
 that $\sgfactor$ is $\mathsf{W}[1]$-hard and hence unlikely to be fixed parameter tractable.
On the other hand, no good upper bound is known.
Interestingly, the parameterized factorization problem for transformation monoids $T_n$ 
($T_n$ is the monoid consisting of all mappings on the finite set $\{1,2,\ldots,n\}$), which is denoted by $\tmfactor$, is $\mathsf{W}[2]$-hard \cite{caichendowneyfellows}.
It was shown in \cite{fernaubruchertseifer} that $\tmfactor$ is equivalent under fixed parameter reductions
to the problem whether a given deterministic automaton has a synchronizing word of length at
most $k$, where $k$ is the parameter.  The latter problem belongs to the intersection of the parameterized classes
$\mathsf{A}[2]$, $\mathsf{W}[\mathsf{P}]$, and {\sf WNL} (see Section~\ref{sec-para-ct} for a definition). In particular, $\tmfactor$
also belongs to the classes $\mathsf{A}[2]$, $\mathsf{W}[\mathsf{P}]$, and {\sf WNL}.  Our first result improves on the membership in $\mathsf{A}[2]$
by showing that $\tmfactor$ belongs to the class $\mathsf{W}^\textsf{func}[2] \subseteq  \mathsf{A}[2]$.
The classes $\mathsf{W}^\textsf{func}[2]$ and $\mathsf{W}[2]$ are both defined by parameterized model-checking problems for certain fragments of first-order
logic (see Section~\ref{sec-para-ct} for details);
the only difference is that $\mathsf{W}[2]$ restricts to formulas without function symbols, whereas $\mathsf{W}^\textsf{func}[2]$ allows also function symbols.
 
 We then proceed with showing that $\sgfactor$ is equivalent with respect to fixed parameter reduction to two apparently simpler problems: the parameterized knapsack problem
 for symmetric groups $\sgks$ and the parameterized subset sum problem for symmetric groups $\sgsss$. The problem $\sgks$ is a restricted version of $\sgfactor$ in the sense
 that only factorizations of the form $a = a_1^{x_1} a_1^{x_2} \cdots a_m^{x_m}$ with $x_1, \ldots, x_m \in \naturals$ and $\sum_{i=1}^m x_i \leq k$ are allowed.
 For $\sgsss$ one additionally requires $x_1, \ldots, x_m \in \{0,1\}$. In the classical (unparameterized) setting, these problems are again {\sf NP}-complete \cite{LohreyRZ22}.
 
 When restricting the problems $\sgks$ and $\sgsss$ to cyclic permutation groups we will show that the resulting problems are equivalent to the parameterized variant
 of the classical subset sum problem for integers. This problem is known to be in $\mathsf{W}[3]$ \cite{bussislam} and $\mathsf{W}[1]$-hard
  \cite{downeyfellows}. Hence, the same complexity bounds hold for  $\sgks$ and $\sgsss$ when restricted to cyclic permutation groups.
  
 In the last section of the paper, we deal with the change-making problems introduced in \cite{goebbels}, which are variants of the classical
 knapsack and subset sum problems for integers, precise definitions can be found in Section~\ref{sec-change-making}.
It was shown in \cite{goebbels} that these change-making problems are $\mathsf{W}[1]$-hard and contained in $\mathsf{XP}$ (see Section~\ref{sec-para-ct} for a definition). 
We observe that by using the above mentioned result from \cite{bussislam} one can improve the upper bound to $\mathsf{W}[3]$. More interestingly,
we also consider the approximative change making problems from \cite{goebbels} that involve an objective function $f(x,y) = ax+by$ that has to be
minimized. The approximative change making problems are in
  $\mathsf{W}[3]$ (if $b > 0$ or $a=0$) or \textsf{para-NP}-complete (if $b= 0$ and $a > 0$)  (see Section~\ref{sec-para-ct} for a definition of \textsf{para-NP}).
 
 \paragraph{Related work.} The knapsack problem and subset problem have been also studied intensively for finitely generated infinite groups;
see \cite{BKLMNRS20} for a survey. In \cite{GorKou95} a variant of the factorization problem for transformation monoids has been considered, where
it is asked whether for given transformations $a_1, \ldots, a_n \in T_n$ and $k \geq 1$
 there is an idempotent transformation of a certain fixed rank (which is the size of the image) that can be written as a product
of at most $k$ transformations from the list $a_1, \ldots, a_n$.
  
\section{General notations}

We write $\poly(n)$ for an arbitrary polynomial in $n$. The composition $fg$ of two functions $f,g : A \to A$ is evaluated from the left
to right, i.e., $(fg)(a) = g(f(a))$. Therefore we also write $a f$ for $f(a)$. Then we have $a(fg) = afg$. In order to distinguish better between
the argument $a$ and the function $f$ we sometimes also write $a \cdot f$ for $af$.

For a positive integer $N \in \naturals$ we denote by $\#(N)$ the number of bits of $N$.
For integers $i \leq j$ we write $[i,j]$ for the interval $\{i,i+1, \ldots, j\}$. Let $T_n$ be the set of all mappings on $[1,n]$
and $S_n$ be the set of all permutations on $[1,n]$. With respect to composition of functions, $T_n$ is a monoid (the \emph{transformation monoid}
on $n$ elements) and $S_n$ is a group (the \emph{symmetric group} on $n$ elements).
A permutation group is a subgroup of  $S_n$ for some $n$.\footnote{By Cayley's theorem, every finite group is isomorphic to some subgroup of $S_n$. When we talk about
a permutation group $G$, we mean the abstract group $G$ together with its action on the elements of $[1,n]$.}
We will use standard notations for permutation groups; see e.g., \cite{seress03}. Permutations will be often written 
by their decomposition into disjoint cycles, called the \emph{disjoint cycle decomposition}. 
A cycle of length two is a \emph{transposition}. 
For a permutation $\pi$ we denote by $\ord(\pi)$ the order of $\pi$, i.e., the smallest $k \geq 1$ such that $\pi^k$ is
the identity permutation.

 Let $G$ be a finite group. For a subset $A \subseteq G$ we denote with $\langle A \rangle$ the
subgroup of $G$ generated by the elements from $A$ (i.e., the closure of $A$ under the group multiplication).
If $\langle A \rangle = G$ then $A$ is called a generating set of $G$.
For $k \geq 0$ we write $A^{\leq k}$ for the set of all products $a_1 a_2 \cdots a_l \in G$
with $l \leq k$ and $a_1, \ldots, a_l \in A$. 
For an element $g \in \langle A \rangle$ we denote with $|g|_A$ (the $A$-length of $g$)
the smallest integer $k$ such that $g \in A^{\leq k}$.

Several times we use the well-known Chinese remainder theorem in the following version:

\begin{theorem}[\mbox{see e.g.~\cite[Chapter~3, \S 4]{ireland}}] \label{CRT}
Let $n_1, n_2, \ldots, n_l \geq 2$ be pairwise coprime integers and let $N = \prod_{1 \le i \le l} n_i$.
 Then there is a ring isomorphism
\[ h : \prod_{1 \le i \le l} \integers_{n_i} \to \integers_N .\]
Moreover, given the binary representations of $n_1, \ldots, n_l \in \naturals$ and 
$x_i \in \integers_{n_i}$ for $1 \le i \le l$, one can compute in polynomial time the binary representation of 
$h(x_1, \ldots, x_l) \in \integers_N$.\footnote{This follows from the standard proof of the Chinese remainder theorem; see \cite[proof of Theorem 1 on p.~34]{ireland}.}
\end{theorem}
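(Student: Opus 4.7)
The plan is to first establish the algebraic isomorphism in the classical way and then turn the existence proof into an explicit polynomial-time algorithm.

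For the isomorphism, I would go in the reverse direction and define $\varphi : \integers_N \to \prod_{i} \integers_{n_i}$ by $\varphi(x) = (x \bmod n_1, \ldots, x \bmod n_l)$. This is clearly a ring homomorphism because each coordinate projection is. To show injectivity, suppose $\varphi(x) = 0$; then $n_i \mid x$ for each $i$, and since the $n_i$ are pairwise coprime, their product $N$ divides $x$, so $x = 0$ in $\integers_N$. Because both sides have cardinality $N$, a kernel-free homomorphism between finite sets of equal size is an isomorphism, and we set $h = \varphi^{-1}$.

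For the computational part, the standard constructive proof gives an explicit formula. Let $N_i = N / n_i$. Since $\gcd(N_i, n_i) = 1$, the extended Euclidean algorithm produces in polynomial time integers $M_i, K_i$ with $N_i M_i + n_i K_i = 1$, so $M_i$ represents the inverse of $N_i$ modulo $n_i$. The element $e_i := N_i M_i \bmod N$ satisfies $e_i \equiv 1 \pmod{n_i}$ and $e_i \equiv 0 \pmod{n_j}$ for $j \neq i$, i.e., $\varphi(e_i)$ is the $i$-th standard basis vector. Therefore
\[ h(x_1, \ldots, x_l) = \Bigl(\sum_{i=1}^l x_i e_i\Bigr) \bmod N . \]
I would compute this sum from the binary representations of $n_1, \ldots, n_l$ and $x_1, \ldots, x_l$ directly from the formula.

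The only thing to verify carefully is that every intermediate quantity has bit-length polynomial in the input size $\sum_i \#(n_i)$. This is routine: each $N_i \leq N$ has bit-length $\sum_j \#(n_j)$, the extended Euclidean algorithm applied to $(N_i, n_i)$ runs in time polynomial in $\#(N_i) + \#(n_i)$, and the final modular sum of $l$ products, each a product of three numbers bounded by $N$, fits in polynomial time and space. The main (and only) subtlety is this bit-length bookkeeping; the mathematical content is exactly the classical constructive CRT, so no further obstacle arises.
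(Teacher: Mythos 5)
Your proof is the standard constructive CRT argument (projection map for the isomorphism, idempotents $e_i = N_i M_i \bmod N$ via extended Euclid for the explicit inverse), which is exactly what the paper cites: the theorem is stated without proof, with a footnote pointing to ``the standard proof of the Chinese remainder theorem'' in Ireland and Rosen. So the approach matches the one the paper intends; there is no genuine difference to compare.
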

We will only use the fact that $h$ is an isomorphism between the additive abelian groups 
$\prod_{1 \le i \le l} \integers_{n_i} $ and  $\integers_N$.

\section{Parameterized complexity theory}  \label{sec-para-ct}

An excellent introduction into parameterized complexity theory is the monograph \cite{flumgrohe}.
A \emph{parameterized problem} is a subset $L \subseteq \Sigma^* \times \naturals$ for a finite alphabet $\Sigma$.
For a pair $(u, k) \in \Sigma^* \times \naturals$ we call $k$ the parameter.
For a fixed number $d \in \naturals$, the $d^{\text{th}}$ slice of $L$ is $L_d = \{ (u,d) \in L : u \in \Sigma^* \}$.
The class {\sf FPT} of \emph{fixed parameter tractable problems} is the class of all parameterized problems $L \subseteq \Sigma^* \times \naturals$ for which there is an algorithm
deciding $L$ and running on input $(u,k)$ in time $poly(|u|) \cdot f(k)$ for an arbitrary computable function $f : \naturals \to \naturals$.
The class \textsf{XP} is the class of parameterized problems that can be solved by a \emph{deterministic} algorithm in time 
$|u|^{f(k)} + f(k)$ for an arbitrary computable function $f : \naturals \to \naturals$ \cite[Definition 2.22]{flumgrohe}.
The class \textsf{para-NP} is the class of parameterized problems that can be solved by a \emph{nondeterministic} algorithm in time 
$poly(|u|) \cdot f(k)$ for an arbitrary computable function $f : \naturals \to \naturals$ \cite[Definition 2.10]{flumgrohe}.
The class $\mathsf{W}[\mathsf{P}]$ is the class of parameterized problems that can be solved by a \emph{nondeterministic} algorithm in time 
$poly(|u|) \cdot f(k)$ for an arbitrary computable function $f : \naturals \to \naturals$ with the restriction that at most
$\log(|u|) \cdot h(k)$ steps are allowed to be nondeterministic ones for an arbitrary computable function $h : \naturals \to \naturals$ \cite[Definition~3.1]{flumgrohe}. 
By \cite[Proposition~3.2]{flumgrohe} we have
\begin{displaymath}
{\sf FPT} \subseteq \mathsf{W}[\mathsf{P}] \subseteq \textsf{XP} \cap \textsf{para-NP}.
\end{displaymath}
An \emph{fpt-reduction} from a parameterized problem $L_1 \subseteq \Sigma_1^* \times \naturals$ to a parameterized 
problem $L_2 \subseteq \Sigma_2^* \times \naturals$ is a computable function $r : \Sigma_1^* \times \naturals \to \Sigma_2^* \times \naturals$ such that
the following hold for all $u \in \Sigma_1^*$ and $k \in \naturals$:
\begin{itemize}
\item there are functions $g,h$ such that $r(u,k) = (g(u,k), h(k))$,
\item $r(u,k)$ can be computed in time $poly(|u|)  f(k)$ for a computable function $f : \naturals \to \naturals$, 
\item $(u,k) \in L_1$ if and only if $r(u,k) \in L_2$.
\end{itemize}
We write $L_1 \lfpt L_2$ if there is an \emph{fpt-reduction} from $L_1$ to $L_2$. We write
$L_1 \equiv^{\fpt} L_2$ for $L_1 \lfpt L_2 \lfpt L_1$. Moreover, for the parameterized problem $L$
we define its fpt-closure $[L]^{\fpt}$ as the class of all parameterized problems $K$ with $K \lfpt L$.

In \cite{guillemot} the class {\sf WNL} is defined as $[\textsf{NTMC}]^{\fpt}$, where NTMC (for \emph{nondeterministic Turing machine computation})
is the following parameterized problem:
\begin{problem}[{\sf NTMC}]\label{prob-NTMC}~\\
Input: a nondeterministic Turing machine $M$, a unary encoded $q \in \naturals$ and $k \in \naturals$ (the parameter)\\
Question: Does $M$ accept the empty string in $q$ steps by examining at most $k$ cells?
\end{problem}
Of particular importance in parameterized complexity theory are model-checking problems for fragments of first-order logic.
We rely on the definition of a structure given in \cite{flumgrohe}. 
We fix a countably infinite set of \emph{variables}.
A \emph{signature} $\sigma$ is a finite set of \emph{relational symbols} and \emph{function symbols}, 
where every relation symbol $r \in \sigma$ has an \emph{arity} 
$\alpha(r) \in \naturals \setminus \{0\}$ and every function symbol $f \in \mathcal{R}$ has an arity
$\alpha(f) \in \naturals$. Function symbols of arity $0$ are also called \emph{constant symbols}.
A \emph{$\sigma$-structure} (or just structure, if the signature is not important) 
is a tuple $\mathcal{A} = (A, (s^{\mathcal{A}})_{s \in \sigma})$, where $A$ is a non-empty set (the universe 
of $\mathcal{A}$). For every relation symbol $r \in \sigma$, 
$r^{\mathcal{A}}\subseteq A^{\alpha(r)}$ is an $\alpha(r)$-ary relation on  the universe.
For every function symbol $f \in \sigma$, $f^\sigma : A^{\alpha(f)} \to A$ is an $\alpha(f)$-ary function on $A$.
A \emph{relational signature} only contains relational symbols and a \emph{relational structure} is a $\sigma$-structure for 
a relational signature $\sigma$. 
 
\emph{Terms} over the signature $\sigma$ are built in the usual way: every variable and constant symbol is a term and if 
$t_1, \ldots, t_n$ are terms and $f$ is a function symbol with $\alpha(f) = n$ then also $f(t_1, \ldots, t_n)$ is a term.
The size $|t|$ of a term $t$ is inductively defined by $|x| = |a|=1$ for a variable $x$ and a constant symbol $a$ and 
$|f(t_1, \ldots, t_n)| = 1 + \sum_{i=1}^n |t_i|$. Note that if the signature $\sigma$ is relational then the only terms are variables.

\emph{First-order formulas} over the signature $\sigma$ are built up from atomic formulas of the form $r(t_1, \ldots, t_n)$ (where
$r \in \sigma$ is a relational symbol, $\alpha(r)=n$, and $t_1, \ldots, t_n$ are terms) and $t_1=t_2$ (for terms $t_1, t_2$) using boolean operators
($\neg$, $\wedge$ and $\vee$) and quantifiers ($\exists x$ and $\forall x$ for a variable $x$).
A \emph{relational first-order formula} is a first-order formula $F$ such that every term that appears in $F$ is a variable.
A (first-order) \emph{sentence} is a first-order formula without free variables, i.e., every occurrence of a variable $x$ that appears in an atomic formula
is within the scope of a quantifier $\exists x$ or $\forall x$.
For a sentence $F$, we write $\mathcal{A} \models F$ if the formula $F$ is true (in the usual sense) in the structure $\mathcal{A}$.
For a first-order formula $F$ we define the length $|F|$ of the formula inductively:
\begin{itemize}
\item $|t_1=t_2|=1+|t_1|+|t_2|$ for terms $t_1, t_2$,
\item $|r(t_1, \dots, t_n) = 1 + \sum_{i=1}^n |t_i|$ for terms $t_1, \ldots, t_n$ and a relational symbol $r$,
\item $|F \land G| = |F \lor G|  =  |F| + |G|+1$,
\item $|\neg F| = |\exists x F| = |\forall x F| = |F|+1$.
\end{itemize}
For a class of first-order sentences $\mathsf{C}$ we define 
the parameterized problem $\pMC(\mathsf{C})$ as follows:
\begin{problem}[parameterized model-checking problem for $\mathsf{C}$, $\pMC(\mathsf{C})$ for short]~\\
Input: a sentence $F \in \mathsf{C}$ and a structure $\mathcal{A}$\\
Parameter: the length $|F|$ of the formula $F$\\
Question: Does $\mathcal{A} \models F$ hold?
\end{problem}
We now define several well-known parameterized complexity classes by taking for $\mathsf{C}$ certain 
fragments of first-order logic.
For $l \geq 1$ we define $\Sigma_l$ as the class of relational first-order sentences of the form
$\exists \overline{x}_1 \forall \overline{x}_2 \cdots Q_l \overline{x}_l \psi$,
where $\psi$ is a relational quantifier-free formula,
$Q_l = \exists$ if $l$ is odd, $Q_l = \forall$ if $l$ is even, and every
$\overline{x}_i$ is a tuple of variables of arbitrary length. 
If we require in addition that every tuple $\overline{x}_i$ with $2 \leq i \leq l$ has at most $u$ variables,
we obtain the class $\Sigma_{l,u}$. If we allow arbitrary terms in these
classes, then we obtain $\Sigma^\func_{l}$ and $\Sigma^\func_{l,u}$, respectively.

We then define the following parameterized complexity classes, where $l \geq 1$:
\begin{alignat}{2}
\textsf{W}[l] & \ = \ \big[\pMC(\Sigma_{l,1})\big]^{\textsf{fpt}} & \quad\quad &  \text{\cite[Theorem~7.22]{flumgrohe}} \label{def-W} \\
\textsf{A}[l]  & \ = \ [\pMC(\Sigma_l)]^{\textsf{fpt}}   &  & \text{\cite[Definition~5.7]{flumgrohe}} \nonumber \\
\textsf{W}^\textsf{func}[l] & \ = \ \big[\pMC(\Sigma_{l,1}^\func)\big]^{\textsf{fpt}} & &  \text{\cite[Definition~28]{ChenFG05}}  \nonumber \\
\textsf{A}^\textsf{func}[l]  & \ = \  [\pMC(\Sigma^\func_l)]^{\textsf{fpt}} .   &  &   \nonumber
\end{alignat}
The following facts are known about these classes:
\begin{itemize}
\item $\textsf{W}[1]  = \textsf{A}[1]$ (this follows directly from our definition).
\item $\textsf{W}[l]  = \big[\pMC(\Sigma_{l,u})\big]^{\textsf{fpt}}$ \cite[Theorem~7.22]{flumgrohe} and $\textsf{W}^\func[l]  = \big[\pMC(\Sigma^\func_{l,u})\big]^{\textsf{fpt}}$ \cite[Theorem~29]{ChenFG05} for every
$u \geq 1$  
\item $\textsf{W}[l] \subseteq \textsf{W}^\textsf{func}[l] \subseteq \textsf{A}^\textsf{func}[l] = \textsf{A}[l]$ for all $l \geq 1$
(only the last equality is nontrivial; it is stated in \cite[p.~201]{flumgrohe}
\item $\displaystyle \bigcup_{l \geq 1} \textsf{W}[l] \subseteq {\sf WNL} \subseteq \textsf{XP} \cap \textsf{para-NP}$ \cite[Theorem~1]{fernaubruchertseifer}
\item  $\displaystyle \bigcup_{l \geq 1} \textsf{W}^\textsf{func}[l] \subseteq \textsf{W}[\textsf{P}]$ \cite[Exercise~8.58]{flumgrohe}
\end{itemize}
Finally, let us remark that if in the problem NTMC (Problem~\ref{prob-NTMC}) one takes 
$q$ instead of $k$ as the parameter, then the fpt-closure of the resulting problem is $\textsf{W}[1]$;
see \cite{guillemot}.

\section{Parameterized complexity of factorization problems}

\subsection{Generic variants of factorization problems} \label{sec-generic}

In the following the parameter will be always the natural number $k$.
It therefore makes no difference whether the parameter $k$ is given in binary or unary representation, 
since it enters the running time with an arbitrary function $f(k)$.

Let $\mathsf{C}$ be a class of monoids. Every monoid $M \in \mathsf{C}$ must have a finite representation that we define later for the 
classes considered in this paper. Also elements of $M$ must have a finite representation.
The parameterized factorization problem for the class $\mathsf{C}$ is the following problem:

\begin{problem}[parameterized factorization problem for the class $\mathsf{C}$, $\Cfactor$ for short]~\\
Input: a monoid $M \in \mathsf{C}$, elements $a, a_1,\dots,a_m \in M$ and $k \in \naturals$ (the parameter)\\
Question: Is $a \in \{a_1,\dots,a_m\}^{k}$?
\end{problem}
We also consider two variants of $\Cfactor$ by restricting the order in which the elements $a_1,\dots,a_m$
may occur.

\begin{problem}[parameterized knapsack for $\mathsf{C}$, $\Cks$ for short]\label{pknapsack}~\\
Input: a monoid $M \in \mathsf{C}$, elements $a, a_1,\dots,a_m \in M$ and $k \in \naturals$ (the parameter)\\
Question: Are there $x_1,\dots,x_m \in \naturals$ with $\sum_{i=1}^m x_i = k$ and $a = a_1^{x_1} \cdots a_m^{x_m}$?
\end{problem}

\begin{problem}[parameterized subset sum for $\mathsf{C}$, $\Csss$ for short]\label{psubsetsum}~\\
Input: a monoid $M \in \mathsf{C}$, elements $a, a_1,\dots,a_m \in M$ and $k \in \naturals$ (the parameter)\\
Question: Are there $x_1,\dots,x_m \in \{0,1\}$ with $\sum_{i=1}^m x_i = k$ and $a = a_1^{x_1} \cdots a_m^{x_m}$?
\end{problem}
Note that when $\mathsf{C}$ only contains commutative monoids, the problems $\Cfactor$ and $\Cks$ are obviously equivalent with respect to fpt-reductions.
In this case, we only consider the problems $\Cks$ and $\Csss$. 
If $\mathsf{C} = \{M\}$ consists of only one monoid, we write
$\mathsf{pSSS}[M]$ instead of $\mathsf{pSSS}[\{M\}]$ and analogously for the other problems.

In this paper, the focus is on permutation groups and abelian groups.

\begin{definition}
\label{def-monoid-classes} \rm
We consider the following cases for the class $\mathsf{C}$:
\begin{itemize}
\item $\mathsf{T}_* := \{ T_n : n \geq 1\}$ is the class of all transformation monoids.  The transformation monoid $T_n$ is simply represented by the number $n$ 
in \emph{unary} representation and a function $f \in T_n$ is represented by the list $f(1), f(2), \ldots, f(n)$. The problem $\tmfactor$ has been considered in \cite{caichendowneyfellows}.
\item $\mathsf{S}_* := \{ S_n : n \geq 1\}$ is the class of all symmetric groups. For this case, we inherit the above input representation for $\mathsf{T}_*$.
\item $\mathsf{AbPG}$ is the class of abelian permutation groups. An abelian permutation group is represented by a list of pairwise commuting generators $\pi_1, \ldots, \pi_m \in S_n$ (for some $n$).
Elements of $\langle \pi_1, \ldots, \pi_m \rangle$ are simply represented by permutations (by \cite{FurstHL80} one can check in polynomial time, whether a given permutation belongs to
$\langle \pi_1, \ldots, \pi_m \rangle$).
\item $\mathsf{CycPG}$ is the class of cyclic permutation groups. We use the same input representation as for $\mathsf{AbPG}$.
In Appendix~\ref{appendix-cyclic}
we will show that given a list of permutations $\pi_1, \ldots, \pi_m$ one can check in polynomial time, whether the permutation group $\langle \pi_1, \ldots, \pi_m \rangle$
is cyclic. In the positive case one can compute in polynomial time a permutation $\pi$ with $\langle \pi\rangle = \langle \pi_1, \ldots, \pi_m \rangle$.
\item the singleton classes $\{\integers\}$ and $\{\naturals\}$: in these cases, there is of course no reason to choose an input representation for the monoid. 
Elements of $\integers$ or $\naturals$ are represented in binary representation. 
\item $\integers^* := \{\integers^n : n \geq 1 \}$ 
and $\naturals^* := \{\naturals^n : n \geq 1 \}$, where $\integers^n$ (respectively, $\naturals^n$) is represented by the unary encoding of $n$ and group elements
are represented by $n$-tuples of binary encodings.
\item $\mathsf{FAbG}$ is the class of finite abelian groups, where the finite abelian group $G =  \prod_{i=1}^d \integers_{n_i}$
is represented by the tuple $(n_1, n_2, \ldots, n_d)$ with each $n_i$ given in binary representation. An element of $G$ is given by a tuple $(a_1, a_2, \ldots, a_d)$ with each $a_i \in [0,n_i-1]$
given in binary representation. 
\item  $\mathsf{FCycG} := \{ \integers_n : n \geq 2 \}$ is the class of finite cyclic groups (with $n$ again given in binary notation).
\end{itemize}
The classes except for $\mathsf{C} = \mathsf{T}_*$ and $\mathsf{C} = \mathsf{S}_*$ are called the \emph{commutative classes} in the rest of the paper.
For those classes we do not consider $\Cfactor$ since it is equivalent to $\Cks$ with respect to fpt-reductions.
\end{definition}

\begin{remark}
Note that as abstract classes of groups, $\mathsf{CycPG}$ and $\mathsf{FCycG}$ (respectively, $\mathsf{AbPG}$ and $\mathsf{FAbG}$) are the same, but there is a difference
in the succinctness of descriptions.
In general, if  $\gamma_1, \ldots, \gamma_d$ are the pairwise disjoint cycles of a permutation $\pi \in S_n$, and $\ell_i \in [1,n]$ is the length of $\gamma_i$, then
$\ord(\pi) = \lcm(\ell_1, \ldots, \ell_d)$. In particular, every prime power $p^e$ that divides $\ord(\pi)$ is bounded by $n$. Therefore the cyclic permutation group $\langle \pi \rangle \leq S_n$
is isomorphic to  $\integers_m$, where all prime powers in $m$ are bounded by $n$ (recall that the latter is given in unary representation in our setting).
Hence, if for instance, $\integers_{2^m} \cong \langle \pi \rangle$ for $\pi \in S_n$, 
 then $n \geq 2^m$ and the encoding of $\pi$ (a list of $n$ numbers from $[1,n]$)
needs at least $2^m \cdot m$ bits.
\end{remark}

\begin{remark}
For an abelian permutation group $\langle \pi_1, \ldots, \pi_m \rangle$ we represent an element $\pi \in \langle \pi_1, \ldots, \pi_m \rangle$
as a permutation. Alternatively, we could represent $\pi$ by binary encoded exponents $k_1, \ldots, k_m \in \naturals$ such that
$\pi = \pi_1^{k_1} \cdots \pi_m^{k_m}$. These two representations are equivalent in the sense that one can change in polynomial time between the 
two representations. One direction is clear: given binary encoded numbers $k_1, \ldots, k_m$ one can compute by iterated squaring the 
permutation $\pi_1^{k_1} \cdots \pi_m^{k_m}$. On the other hand,
given a permutation $\pi \in \langle  \pi_1, \ldots, \pi_m \rangle$, the 
algorithm from \cite{FurstHL80} allows to compute in polynomial time a straight-line program over $\pi_1, \ldots, \pi_m$
for $\pi$. This is a context-free grammar that produces a single word $w$ over the letters $\pi_1, \ldots, \pi_m$ that evaluates to $\pi$.
One can then easily compute in polynomial time the number of occurrences of every $\pi_i$ in $w$.
\end{remark}
In some papers, one finds the alternative of $\Cfactor$, where the condition $a \in \{a_1,\dots,a_m\}^{k}$ is replaced by
$a \in \{a_1,\dots,a_m\}^{\leq k}$. Similarly, in $\Cks$ and $\Csss$ one could replace the condition $\sum_{i=1}^m x_i = k$ by
$\sum_{i=1}^m x_i \leq k$. Let us denote these problems with $\Cfactorleq$, $\Cksleq$, and $\Csssleq$, respectively.

For the subset sum problems one also finds the variants, where the $a_i$ are assumed to be pairwise
different. These variants will be denoted with $\Csssr$ and $\Csssrleq$. In the case where $\mathsf{C}$ consists of commutative monoids, one can define these problems also as follows:
given a monoid $M \in \mathsf{C}$, a finite subset $A \subseteq M$, an element $a \in M$
and the parameter $k$, the question is whether there is a subset $S \subseteq A$ with $|S|=k$ (respectively, $|S| \leq k$) and $\sum_{b \in A} b = a$.
The problem $\Zsssr$ was studied in \cite{bussislam}, whereas $\Zsss$ is the variant studied in \cite{downeyfellows}.
We will later show that the above variants of subset sum are all equivalent with respect to fpt-reductions for the classes $\mathsf{C}$ from Definition~\ref{def-monoid-classes}.
By replicating group elements in the input list one easily shows the following:
\begin{lemma} \label{lemma-simple}
For every class $\mathsf{C}$ we have $\Cfactor \lfpt \Cks \lfpt \Csss$ and
$\Cfactorleq \lfpt \Cksleq \lfpt \Csssleq$.
\end{lemma}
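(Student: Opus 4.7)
The plan is to give two generic reductions which, applied verbatim, yield all four claimed inequalities; in each step one simply replicates the given list of generators sufficiently often, keeping the parameter $k$ and the underlying monoid $M$ unchanged, so that all class-dependent representation requirements from Definition~\ref{def-monoid-classes} remain satisfied.

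For $\Cfactor \lfpt \Cks$, given an instance $(M, a, a_1, \ldots, a_m, k)$, I would output the same target $a$ with parameter $k$ together with the new list
\[ b_{(j-1)m + i} \; := \; a_i \quad \text{for } j \in [1,k], \, i \in [1,m], \]
i.e., $k$ successive concatenated copies of $a_1, \ldots, a_m$ in the given order. A $\Cks$-solution $a = b_1^{y_1} \cdots b_{km}^{y_{km}}$ with $\sum_l y_l = k$ decomposes into $k$ blocks, the $j$-th block contributing the factor $a_1^{y_{(j-1)m+1}} \cdots a_m^{y_{(j-1)m+m}}$; reading these blocks left to right expresses $a$ as a product of exactly $\sum_l y_l = k$ elements from $\{a_1,\ldots,a_m\}$, so $a \in \{a_1,\ldots,a_m\}^k$. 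Conversely, given $a = c_1 \cdots c_k$ with $c_j = a_{i_j}$, set $y_{(j-1)m + i_j} = 1$ and all other exponents in block $j$ to zero. The exact same construction settles $\Cfactorleq \lfpt \Cksleq$, since in both directions $\sum_l y_l$ equals the actual factorization length.

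For $\Cks \lfpt \Csss$, I would instead output the list that contains $k$ consecutive copies of each $a_i$:
\[ c_{(i-1)k + j} \; := \; a_i \quad \text{for } i \in [1,m], \, j \in [1,k]. \]
From $\Cks$-exponents $x_1, \ldots, x_m \in \naturals$ with $\sum_i x_i = k$ we have $x_i \leq k$, so I mark exactly $x_i$ of the $k$ copies of $a_i$; this produces $\{0,1\}$-coefficients summing to $k$ whose product, read in the fixed order of the new list, is still $a_1^{x_1} \cdots a_m^{x_m} = a$. Conversely, any $\Csss$-solution is grouped block by block to recover exponents $x_i \in [0,k]$ with $\sum_i x_i = k$ and the same product. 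Replacing $=k$ by $\leq k$ verbatim gives $\Cksleq \lfpt \Csssleq$. In both reductions the output size is $O(km)$ times the size of the input list, hence polynomial, and the parameter is preserved, so the fpt-reduction bound $\poly(n)\cdot f(k)$ is trivially met. There is no real obstacle here; the lemma is genuinely a bookkeeping observation, matching the paper's phrasing ``one easily shows''.
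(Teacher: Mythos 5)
Your two block-replication reductions are exactly the ones the paper uses: $k$ concatenated copies of the list for $\Cfactor \lfpt \Cks$, and $k$ copies of each $a_i$ in turn for $\Cks \lfpt \Csss$, with the same observation that both constructions handle the $\leq k$ variants verbatim. The proposal is correct and matches the paper's proof in substance.
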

\begin{proof}
We only show $\Cfactor \lfpt \Cks \lfpt \Csss$; the second statement can be shown in the same way.
For $\Cfactor \lfpt \Cks$ consider input data $M \in \mathsf{C}$, $a, a_1,\dots, a_m \in M$ and $k \in \naturals$ for $\Cfactor$.
We clearly have $a \in \{a_1,\dots,a_m\}^{k}$ if and only if there exist $x_{i,j} \in \naturals$ ($i \in [1,k]$, $j \in [1,m]$) such that
\begin{equation} \label{eq-i}
a = \prod_{i=1}^k \prod_{j=1}^m  a_j^{x_{i,j}} \text{ and } \sum_{i=1}^k \sum_{j=1}^m x_{i,j} = k.
\end{equation}
To show that $\Cks \lfpt \Csss$ note that there exist $x_1, \ldots, x_m \in \naturals$ such that
\[
a = a_1^{x_1}  a_2^{x_2}  \cdots  a_m^{x_m}  \text{ and } \sum_{i=1}^m  x_{i} = k
\]
if and only if there exist $x_{i,j} \in \{0,1\}$ ($i \in [1,m]$, $j \in [1,k]$) such that
\[
a = a_1^{x_{1,1}} \cdots a_1^{x_{1,k}} a_2^{x_{2,1}} \cdots a_2^{x_{2,k}} \cdots a_m^{x_{m,1}} \cdots a_m^{x_{m,k}}  \text{ and } \sum_{i=1}^m \sum_{j=1}^k x_{i,j} = k.
\]
\end{proof}

\subsection{Factorization in transformation monoids} 

In \cite{caichendowneyfellows} it was shown that $\tmfactor$ is $\mathsf{W}[2]$-hard.
The complexity of $\tmfactor$ was related in \cite{fernaubruchertseifer} to another parameterized problem.
Consider a deterministic finite automaton $A = (Q, \Sigma, q_0, \delta, F)$ (DFA for short), where
$Q$ is the finite set of states, $\Sigma$ is the finite input alphabet, $q_0 \in Q$ is the initial state, $F \subseteq Q$ is the set
of final states and $\delta: Q \times \Sigma \to Q$ is the transition function.
A \emph{synchronizing word} $w \in \Sigma^*$ for $A$ has the property that there exists a state 
$q \in Q$ such that for every state $p \in Q$ we have $\hat{\delta}(p,w)=q$, where
$\hat{\delta}: Q \times \Sigma^* \to Q$ is the extension of $\delta$ to words (such a synchronizing word does not necessarily exist). 
The following problem has been introduced in \cite{FernauHV15}; see also \cite{fernaubruchertseifer}.

\begin{problem}[$\mathsf{p\text{-}DFA\text{-}SW}$ \cite{fernaubruchertseifer}]\label{pdfasw}~\\
Input: DFA $A$, $k \in \naturals$ (the parameter)\\
Question: Is there a synchronizing word $w$ for $A$ with $|w| \leq k$?
\end{problem}
It is known that {\sf p-DFA-SW} $\equiv^{\fpt} \tmfactor$ \cite{fernaubruchertseifer} and that
$\mathsf{p\text{-}DFA\text{-}SW}$ belongs to the intersection of the classes $\mathsf{A}[2]$, $\mathsf{W[P]}$, and $\mathsf{WNL}$ \cite{FernauHV15}.
The complexity class $\mathsf{W}[\textsc{sync}]$ was defined in  \cite{fernaubruchertseifer} as
$\mathsf{W}[\textsc{sync}] = [\mathsf{p\text{-}DFA\text{-}SW}]^{\fpt}$.
Therefore, $\tmfactor$ is $\mathsf{W}[\textsc{sync}]$-complete and belongs to the intersection of the classes $\mathsf{A}[2]$, $\mathsf{W[P]}$, and $\mathsf{WNL}$.
Note that $\mathsf{W}^{\func}[2] \subseteq \mathsf{A}^{\func}[2] = \mathsf{A}[2]$. Therefore, the following result  improves the membership of
 $\tmfactor$  in $\mathsf{A}[2]$ (and makes the statement $\tmfactor \in \mathsf{W[P]}$ obsolete since $\mathsf{W}^{\func}[2] \subseteq\mathsf{W[P]}$).
\begin{theorem}  \label{thm-tmfactor}
$\tmfactor$ belongs to 
$\mathsf{W}^{\func}[2]$. 
\end{theorem}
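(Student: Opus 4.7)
The plan is to give an fpt-reduction from $\tmfactor$ to $\pMC(\Sigma^\func_{2,1})$; this suffices because $\mathsf{W}^{\func}[2] = [\pMC(\Sigma^\func_{2,1})]^{\fpt}$ by \cite[Theorem~29]{ChenFG05}. The key observation is that function symbols let us express the iterated application of $k$ chosen transformations as a single \emph{term} of size $O(k)$, so the universal part of the sentence needs only one variable, namely the point $x \in [1,n]$ on which both sides of the target equation are evaluated.

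Given an input instance $(T_n; a, a_1, \ldots, a_m; k)$ of $\tmfactor$, I would construct a structure $\mathcal{A}$ whose universe is the disjoint union $[1,n] \sqcup [1,m]$, equipped with two unary relations $P$ and $I$ marking the two parts, a unary function symbol $\alpha$ interpreted by $\alpha(x) = x \cdot a$ for $x \in P$ (and the identity on $I$), and a binary function symbol $g$ interpreted by $g(x,i) = x \cdot a_i$ for $x \in P, i \in I$ (extended arbitrarily elsewhere). The structure $\mathcal{A}$ can clearly be built from the input in polynomial time.

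Next I would use the $\Sigma^\func_{2,1}$ sentence
\[
F_k \;=\; \exists y_1 \cdots \exists y_k \Bigl( \bigwedge_{i=1}^k I(y_i) \,\wedge\, \forall x \bigl(\neg P(x) \,\vee\, g(\ldots g(g(x,y_1),y_2)\ldots,y_k) = \alpha(x)\bigr)\Bigr).
\]
By the composition convention $(fg)(x) = g(f(x))$ from Section~2, the nested term evaluates to $x \cdot a_{y_1} a_{y_2} \cdots a_{y_k}$ for $x \in P$, so $\mathcal{A} \models F_k$ iff there exist $i_1, \ldots, i_k \in [1,m]$ with $a_{i_1} a_{i_2} \cdots a_{i_k} = a$ in $T_n$, which is exactly the $\tmfactor$ question. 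Since $|F_k|$ depends only on $k$, the map $((T_n; a, a_1, \ldots, a_m); k) \mapsto ((\mathcal{A}, F_k); |F_k|)$ is an fpt-reduction.

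The only point requiring a moment's thought is why $u=1$ universally quantified variable suffices: this is precisely because the function symbol $g$ lets us nest $k$ applications inside a single term, avoiding the introduction of auxiliary variables for the intermediate states $g(x,y_1)$, $g(g(x,y_1),y_2)$, and so on. In a purely relational signature these intermediate values would have to be existentially guessed or universally quantified, blowing the universal block up to $\Omega(k)$ variables and only landing us in $\mathsf{A}[2]$. So the real content here is just the observation that $\mathsf{W}^{\func}[2]$ has been tailored to accommodate exactly this kind of term-based encoding, and the proof is essentially a matter of writing $F_k$ down; there is no genuine obstacle beyond verifying this routine correspondence.
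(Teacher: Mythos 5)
Your proof is correct and follows essentially the same route as the paper: both build a structure with a binary function symbol applying a chosen generator to a point, and both exploit term nesting so the whole length-$k$ product sits inside a single term, reducing the universal block to one variable. The only cosmetic differences are that you express the target equation $x \cdot a_{y_1} \cdots a_{y_k} = x \cdot a$ via a unary function $\alpha$ and term equality, whereas the paper tabulates the graph of the target transformation as a binary relation $Q$, and you index the generators by $[1,m]$ rather than placing the transformations $a_1,\dots,a_m$ directly in the universe.
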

\begin{proof}
We present an fpt-reduction from $\tmfactor$ to $\pMC(\Sigma_{2,1}^{\func})$. Consider an input instance $(f, B, k)$ for $\tmfactor$, 
where $f \in T_n$, $B \subseteq T_n$ and $k \in \naturals$. W.l.o.g. we can assume that $B$ contains the identity function $\mathrm{id}$ with $\mathrm{id}(a)=a$ for all $a \in [1,n]$.
Then, $(f, B, k)$ is a positive instance of $\tmfactor$ if and only if there are $f_1,\dots,f_k \in B$ such that $af = af_{1}  \cdots f_{k}$
 for all $a \in [1,n]$. Let $\sigma = \{P,Q,h\}$ be a signature in which $P$ is a unary relation symbol, $Q$ is a binary relation symbol, and $h$ is a binary function symbol. We define the $\sigma$-structure $\mathcal{A}$ as follows: 
 \begin{itemize}
\item $A = [1,n] \uplus B$ is the universe,
\item $P^\mathcal{A} = B$,
\item $Q^\mathcal{A} = \{(a,af) : a\in [1,n]\} \cup (B \times B)$ and
\item the interpretation of the function symbol $h$ is defined as follows, where $g \in B$ is arbitrary:
\begin{displaymath}
h^\mathcal{A}(a,p) = \begin{cases}
ap & \text{if } a \in [1,n] \text{ and }  p \in B,  \\
g & \text{if } a \in B \text{ or } p \in [1,n]. \\
\end{cases}
\end{displaymath}
\end{itemize}
Let $\varphi_k$ be the following sentence:
\begin{displaymath}
\varphi_k = \exists x_1 \cdots \exists x_k \bigwedge_{i=1}^k P(x_i) \wedge 
\forall z \; Q(z, h(\ldots h(h(z,x_1),x_2),\ldots,x_k)).
\end{displaymath}
Then, $(f, B, k)$ is a positive instance if and only if $\mathcal{A} \models \varphi_k$.
\end{proof}

Since $\tmfactor$ is $\mathsf{W}[\textsc{sync}]$-complete \cite{fernaubruchertseifer}, we obtain:

\begin{corollary}
\rm $\mathsf{W}[\textsc{sync}] \subseteq \mathsf{W}^{\func}[2]$.
\end{corollary}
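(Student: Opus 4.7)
The plan is to derive the inclusion as a routine consequence of Theorem~\ref{thm-tmfactor} together with the completeness statement recalled just before it. Recall that by definition $\mathsf{W}[\textsc{sync}] = [\mathsf{p\text{-}DFA\text{-}SW}]^{\fpt}$, and that the result from \cite{fernaubruchertseifer} gives $\mathsf{p\text{-}DFA\text{-}SW} \equiv^{\fpt} \tmfactor$. Consequently, $\mathsf{W}[\textsc{sync}] = [\tmfactor]^{\fpt}$, so to prove the corollary it suffices to show that $[\tmfactor]^{\fpt} \subseteq \mathsf{W}^{\func}[2]$.

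For this, I would invoke two facts in sequence. First, Theorem~\ref{thm-tmfactor} yields $\tmfactor \in \mathsf{W}^{\func}[2]$. Second, the class $\mathsf{W}^{\func}[2]$ is itself defined as the fpt-closure $[\pMC(\Sigma_{2,1}^{\func})]^{\fpt}$, and fpt-closures are manifestly closed under $\lfpt$ (the relation $\lfpt$ is transitive, as can be verified from the three defining conditions for fpt-reductions). Hence any parameterized problem $K$ with $K \lfpt \tmfactor$ also satisfies $K \lfpt \pMC(\Sigma_{2,1}^{\func})$, so $K \in \mathsf{W}^{\func}[2]$.

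Combining these two observations gives $\mathsf{W}[\textsc{sync}] = [\tmfactor]^{\fpt} \subseteq \mathsf{W}^{\func}[2]$, as desired. Since all the real content has already been established in Theorem~\ref{thm-tmfactor}, there is essentially no obstacle here; the corollary is just a repackaging of that membership through the $\mathsf{W}[\textsc{sync}]$-completeness of $\tmfactor$.
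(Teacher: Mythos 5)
Your proof is correct and follows exactly the same route the paper takes: the corollary is stated as an immediate consequence of Theorem~\ref{thm-tmfactor} together with the $\mathsf{W}[\textsc{sync}]$-completeness of $\tmfactor$ from \cite{fernaubruchertseifer}, using closure of $\mathsf{W}^{\func}[2]$ under fpt-reductions. Your unpacking just makes explicit what the paper leaves implicit.
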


\subsection{Factorization in symmetric groups}

We now show that the various variants of factorization problems for symmetric groups are equivalent with respect to fpt-reductions:
\begin{theorem}\label{theoremsubsetknapsack}
$\sgfactor \equiv^{\fpt} \sgks \equiv^{\fpt} \sgsss$
\end{theorem}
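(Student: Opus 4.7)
By Lemma~\ref{lemma-simple} we already have $\sgfactor \lfpt \sgks \lfpt \sgsss$, so to prove the three fpt-equivalences it suffices to close the cycle. The plan is to do this by constructing an fpt-reduction $\sgsss \lfpt \sgfactor$. Given an $\sgsss$ instance $(\pi, \pi_1, \ldots, \pi_m, k)$ over $S_n$, I would embed it into an $\sgfactor$ instance over a slightly larger symmetric group $S_{n+R}$ with $R$ polynomial in $m$ and $k$, by augmenting each $\pi_i$ with auxiliary action on the $R$ fresh points. The augmentations are designed so that every length-$k$ factorization hitting the specified target corresponds to a strictly increasing $k$-subset $\{i_1 < \cdots < i_k\} \subseteq [1,m]$ with $\pi_{i_1} \cdots \pi_{i_k} = \pi$, which is exactly the $\sgsss$ condition.

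The main gadget is a \emph{position chain} of $k+1$ fresh points $t_0, t_1, \ldots, t_k$. For each $j \in [1,k]$ and $i \in [1,m]$ I would introduce a generator $\sigma_{j,i}$ that acts as $\pi_i$ on $[1,n]$ and as the adjacent transposition $(t_{j-1}, t_j)$ on the chain, plus further auxiliary action for index enforcement. Under the paper's composition convention $(fg)(a) = g(f(a))$, a simple induction on $k$ shows $(t_0, t_1)(t_1, t_2) \cdots (t_{k-1}, t_k) = (t_0, t_k, t_{k-1}, \ldots, t_1)$; and by tracing the image of $t_0$ through an arbitrary length-$k$ product of adjacent transpositions on the chain, together with the fact that omitting any such transposition disconnects the chain (so the product cannot be a $(k+1)$-cycle), one sees that this specific cycle is attained only when each of the $k$ adjacent transpositions is used exactly once, in the order $j = 1, 2, \ldots, k$. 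Including this cycle in the $\sgfactor$ target therefore forces any solution to be of the form $\sigma_{1, i_1}, \sigma_{2, i_2}, \ldots, \sigma_{k, i_k}$ for some tuple $(i_1, \ldots, i_k) \in [1,m]^k$.

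To additionally enforce that $(i_1, \ldots, i_k)$ is distinct and strictly increasing, I would add a second adjacent-transposition chain indexed by $[1,m]$, together with complementary ``null'' generators acting as the identity on $[1,n]$ that traverse this index chain; the fixed $\sgfactor$ target on the index chain will then be hit only when each index is used at most once and the chosen indices appear in sorted order. The balance between ``real'' and ``null'' generators is controlled by scaling the overall parameter, which will stay in $\Theta(k)$. The principal obstacle I foresee is precisely this calibration step: arranging a single fixed $\sgfactor$ target that simultaneously verifies position ordering, index distinctness, and index monotonicity, even though the underlying $\sgsss$ solution subset is not known in advance. Once that is handled, fpt-ness is routine---the output has $O(mk)$ generators on $O(n+m+k)$ points with parameter $\Theta(k)$---and both directions of the equivalence follow: the forward direction applies $\sigma_{j, i_j}$ in order to witness $\sgfactor$, and the reverse direction uses the chain constraints to recover a sorted $k$-subset from any $\sgfactor$ solution.
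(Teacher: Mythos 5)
You correctly reduce the theorem to establishing $\sgsss \lfpt \sgfactor$, and the first gadget (a $(k+1)$-point chain forcing $k$ factors to be used in a fixed order) is sound in isolation: the product $(t_0,t_1)(t_1,t_2)\cdots(t_{k-1},t_k)$ is the $(k+1)$-cycle $(t_0,t_k,\ldots,t_1)$ and, since any product of only $k$ adjacent transpositions equalling a $(k+1)$-cycle must use each of the $k$ transpositions exactly once and this particular Grassmannian permutation admits a unique reduced word, the position sequence is forced. (The paper achieves the same effect more cheaply with a single $(k+2)$-cycle $\beta$ in every generator together with $\beta^{k+1}$ in the target, which fixes the number of factors to $k+1$ without a chain.)

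The genuine gap is the step you flag as a ``calibration'' issue -- it is not calibration, it is the crux. A second adjacent-transposition chain indexed by $[1,m]$ whose target is a fixed cycle cannot work: writing an $(m+1)$-cycle as a product of adjacent transpositions requires touching all $m$ of them, so any factorization hitting such a target has length at least $m$. Your ``null'' generators would have to supply the roughly $m-k$ missing steps, driving the factorization length -- and hence the new parameter -- to $\Theta(m)$, which depends on the input size and destroys the fpt-reduction. The paper avoids this by replacing single transpositions on the index chain with \emph{interval} permutations: it uses $m+1$ overlapping $(k+1)$-cycles $\gamma_1,\ldots,\gamma_{m+1}$ and generators of the form $\tau_{i,j} = \pi_j\beta\prod_{l=i}^j\gamma_l$ that cover an arbitrary consecutive block $[i,j]$ in a single multiplication (with $j = m+1$ giving the ``null'' padding generators). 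Claims~\ref{claim-1} and \ref{claim-2} in the paper then show that, in any length-$(k+1)$ factorization of the target, these intervals form a partition of $[1,m+1]$ into $k+1$ consecutive blocks in increasing order; projecting onto $[1,n]$ then yields a strictly increasing $k$-tuple of indices, i.e.\ an $\sgsss$ solution. Without this interval-covering idea, your sketch cannot simultaneously bound the parameter by a function of $k$ and enforce index distinctness and monotonicity across all $m$ indices.
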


\begin{proof}
By Lemma~\ref{lemma-simple} it suffices to show $\sgsss \lfpt \sgfactor$. 
Let $\pi,\pi_1,\dots,\pi_m \in S_n$ and $k \in \naturals$ be the input data for
$\sgsss$. W.l.o.g. we can assume that $k \geq 4$. For $i < j$ we use $\llbracket i,j\rrbracket $ to denote the cycle $(i,i+1\dots,j)$. Let $n_0=n+k+3$
and $\beta = \llbracket n+1,n_0-1\rrbracket$, which is a cycle of length $k+2$.
For $l \in [1,m+1]$ we define the cycle  $\gamma_l = \llbracket n_0+(l-1)k,n_0+lk\rrbracket$.
Note that consecutive cycles $\gamma_l$ and $\gamma_{l+1}$ intersect in the point $n_0+lk$. 

We define an instance of $\sgfactor$. The group generators are
\begin{alignat}{2}
\tau_{i,j} & \ = \ \pi_j \beta  \prod_{l=i}^j \gamma_l  & \quad \text{ for } &  j \in [1,m], i \in [1,j] \text{ and } \label{pi-ij}  \\
\tau_{i,m+1} & \ =\  \beta  \prod_{l=i}^{m+1} \gamma_l &  \quad \text{ for } &  i \in [1,m+1] . \label{pi-i,m+1}
\end{alignat}
Finally we define the permutation $\tau$ to be factored by
\begin{equation} \label{def-tau}
\tau = \pi \beta^{k+1} \prod_{l=1}^{m+1} \gamma_l .
\end{equation}
For the following, it is important that
\begin{itemize}
\item $\pi, \pi_1, \ldots, \pi_m$ act trivially on all points that are not in $[1,n]$,
\item $\beta$ acts trivially on all points that are not in $[n+1, n_0-1]$, and
\item $\gamma_1, \ldots, \gamma_{m+1}$ act trivially on all points that are not in $[n_0, n_0+(m+1)k]$.
\end{itemize}
Moreover, the three intervals $[1,n]$, $[n+1, n_0-1]$ and $[n_0, n_0+(m+1)k]$ are pairwise disjoint.

We now show that there exist $x_1, \ldots, x_m \in \{0,1\}$ with $\sum_{i=1}^m x_i = k$ and $\pi = \pi_1^{x_1} \cdots \pi_m^{x_m}$ if and only if
$\tau \in \{\tau_{i,j} : 1 \leq i \leq j \leq m+1\}^{\leq k+1}$. We start with the easier direction from left to right.

Suppose that there  exist $x_1, \ldots, x_m \in \{0,1\}$ with $\sum_{i=1}^m x_i = k$ and $\pi = \pi_1^{x_1} \cdots \pi_m^{x_m}$.
In other words, we can write $\pi = \pi_{i_1} \cdots \pi_{i_k}$
with strictly increasing indices $1 \leq i_1 < \dots < i_k \leq m$. 
Note that since the indices are strictly increasing we have $i_d \geq i_{d-1}+1$ for $2 \leq d \leq k$ and $m+1 \geq i_k+1$. Thus,
the permutations  $\tau_{i_{d-1}+1,i_d}$ and $\tau_{i_k+1,m+1}$ are defined. 
We obtain the following (below, the reader finds justifications for the equalities):
\begin{eqnarray*}
 \tau_{1,i_1} \cdot \bigg( \prod_{d=2}^k \tau_{i_{d-1}+1,i_d}\bigg) \cdot \tau_{i_k+1,m+1}  
& \stackrel{\text{(a)}}{=} &  \pi_{i_1} \beta  \prod_{l=1}^{i_1} \gamma_l \cdot \prod_{d=2}^k \bigg( \pi_{i_d} \beta  \!\!\! \prod_{l=i_{d-1}+1}^{i_d} \!\!\! \gamma_l \bigg) \cdot \beta  \!\!\! \prod_{l=i_k+1}^{m+1} \!\! \gamma_l  \\
& \stackrel{\text{(b)}}{=} &  \pi_{i_1} \cdots \pi_{i_k} \beta^{k+1}  \cdot \prod_{l=1}^{i_1} \gamma_l \cdot \prod_{d=2}^k  \prod_{l=i_{d-1}+1}^{i_d} \!\!\!\! \gamma_l 
\cdot   \!\! \prod_{l=i_k+1}^{m+1} \!\!\! \gamma_l  \\
& \stackrel{\text{(c)}}{=} & \pi_{i_1} \cdots \pi_{i_k} \beta^{k+1}  \prod_{l=1}^{m+1} \gamma_l \ \stackrel{\text{(d)}}{=} \ \tau .
\end{eqnarray*}
Equality (a) follows directly from the definition of the $\pi_{i,j}$ in \eqref{pi-ij} and \eqref{pi-i,m+1}.
For equality (b) note that  for all $i,a \in [1,m]$ and $b \in [a+1,m+1]$ the three permutations 
$\pi_i$, $\beta$ and $\prod_{l=a}^b \gamma_l$ pairwise commute since these
permutations act non-trivially on pairwise disjoint domains. Equality (c) is trivial and (d) finally follows from \eqref{def-tau}.
By the above calculation we obtain $\tau \in \{\tau_{i,j} : 1 \leq i \leq j \leq m+1\}^{\leq k+1}$.

Now suppose $\tau \in \{\tau_{i,j} : 1 \leq i \leq j \leq m+1\}^{\leq k+1}$. Then $\tau$ is generated by a sequence of $h \leq k+1$ permutations. Hence there are numbers $j_1,\dots,j_h \in [1,m+1]$ and for 
every $d \in [1,h]$ numbers $i_d \in [1,j_d]$ such that
\begin{equation} \label{eq-tau-iii}
\tau = \prod_{d=1}^h \tau_{i_d,j_d} .
\end{equation}
By projecting onto the domain $[n+1,n_0-1]$ of $\beta$,
this implies
$\beta ^{k+1} = \beta^h$
(note that every $\tau_{i,j}$ contains exactly one copy of $\beta$ and $\tau$ contains $\beta^{k+1}$).
Since $\beta$ is a cycle of length $k+2$ we obtain $h=k+1$. Moreover, projecting  \eqref{eq-tau-iii} onto the union of the domains
of the $\gamma_l$ (which is $[n_0, n_0+(m+1)k]$) yields
\begin{equation}\label{groupfactsubseteq}
\prod_{l=1}^{m+1} \gamma_l  = \prod_{d=1}^{k+1} \prod_{l=i_d}^{j_d} \gamma_l .
\end{equation}
This equation implies two crucial constraints, namely that every cycle $\gamma_l$ ($l \in [1,m+1]$) occurs in the right-hand side of 
\eqref{groupfactsubseteq}  exactly once (Claim~\ref{claim-1}) and the unique occurrence of the cycle $\gamma_r$ 
appears to the left of the unique occurrence of the cycle $\gamma_s$ whenever $r < s$ (Claim~\ref{claim-2}).
From these constraints we can then deduce that the projection of the factorization \eqref{eq-tau-iii}
onto the domain $[1,n]$ of the permutations $\pi, \pi_1, \ldots, \pi_m$ yields an $\sgsss$-solution for the input instance 
$\pi, \pi_1, \ldots, \pi_m$.

\begin{newclaim} \label{claim-1}
If equation~\eqref{groupfactsubseteq} holds then for all $l \in [1,m+1]$ the cycle $\gamma_l$ occurs in the right-hand side exactly once.
\end{newclaim}

\noindent
\emph{Proof of Claim~\ref{claim-1}.}
If a cycle $\gamma_l$ does not appear in the right-hand side of \eqref{groupfactsubseteq}, then the point $n_0 + (l-1)k+1$ is not moved by the right-hand side 
of \eqref{groupfactsubseteq} but it is moved by the left-hand side (recall that $k \geq 4$). Hence, every cycle $\gamma_l$ ($l \in [1,m+1]$) must appear in the right-hand side.

Now suppose that a cycle $\gamma_s$ for some $s \in [1,m+1]$ occurs more than once in the right-hand side. We have
\begin{displaymath}
(n_0+(s-1)k+1) \cdot {\prod_{l=1}^{m+1} \gamma_l} = n_0+(s-1)k+2 .
\end{displaymath}
In the following we consider the first two occurences of $\gamma_s$ in the right-hand side of \eqref{groupfactsubseteq} and suppose that these occurrences are among the first $e \geq 2$ cycles $\gamma_l$ and any further occurrence of $\gamma_s$ comes after the first $e$ cycles. Then we have
\begin{displaymath}
(n_0+(s-1)k+1) \cdot {\prod_{d=1}^{k+1} \prod_{l=i_d}^{j_d} \gamma_l } = (n_0+(s-1)k+3) \cdot {\prod_{d=e+1}^{k+1} \prod_{l=i_d}^{j_d} \gamma_l }.
\end{displaymath}
Here we use the fact that every cycle $\gamma_l$ has length $k+1$ and  $k \geq 4$. 
From \eqref{groupfactsubseteq} we therefore obtain
\begin{equation} \label{eq-proof-claim1}
(n_0+(s-1)k+3) \cdot {\prod_{d=e+1}^{k+1} \prod_{l=i_d}^{j_d} \gamma_l } = n_0+(s-1)k+2.
\end{equation}
Since for every cycle $\gamma_r$ with $r \neq s$ and every $z \in [1,k-1]$ we have
\begin{displaymath}
(n_0+(s-1)k+z) \cdot {\gamma_r} = n_0+(s-1)k+z,
\end{displaymath}
equation \eqref{eq-proof-claim1}
can only hold if the product $\prod_{d=e+1}^{k+1} \prod_{l=i_d}^{j_d} \gamma_l$ contains $k$ further occurrences of $\gamma_s$
(since $\gamma_s$ is a cycle of length $k+1$, we have $(n_0+(s-1)k+3) \cdot {\gamma_s^k} = (n_0+(s-1)k+3) \cdot {\gamma_s^{-1}} = n_0+(s-1)k+2$).
However, the product $\prod_{d=e+1}^{k+1} \prod_{l=i_d}^{j_d} \gamma_l$
contains at most $k+1-e \leq k-1$ further occurrences of $\gamma_s$. By this it follows
\begin{displaymath}
(n_0+(s-1)k+1) \cdot {\prod_{d=1}^{k+1} \prod_{l=i_d}^{j_d} \gamma_l} \neq n_0+(s-1)k+2,
\end{displaymath}
which is a contradiction. This proves Claim~\ref{claim-1}.
\qed

\begin{newclaim} \label{claim-2}
If equation~\eqref{groupfactsubseteq} holds and $r <s$ then $\gamma_r$ appears before $\gamma_s$
in the right-hand side of \eqref{groupfactsubseteq}. 
\end{newclaim}

\noindent
\emph{Proof of Claim~\ref{claim-2}.}
By Claim~\ref{claim-1} we can assume that every cycle $\gamma_l$ is contained at the right-hand side exactly once. 
Assume that $r< s$ and  $\gamma_s$ appears before $\gamma_r$. We will deduce a contradiction.
We can assume that the difference $s-r$ is minimal in the sense that there is no  $y$ such that $r < y < s$  and $\gamma_s$ appears before $\gamma_y$ or $\gamma_y$ appears before $\gamma_r$. In this case we must have $s = r+1$. Otherwise we have $s-r \geq 2$. If we then consider the cycle $\gamma_{r+1}$, we obtain a contradiction to the minimality of $s-r$ in case
$\gamma_s$ appears before $\gamma_{r+1}$ as well as in case 
$\gamma_{r+1}$ appears before $\gamma_s$ (since then also $\gamma_{r+1}$ appears before $\gamma_r$). 

Let $r_1,\dots,r_{m+1} \in \{1,\dots,m+1\}$ with $r_i \neq r_j$ for $i \neq j$ be the sequence of numbers such that
\begin{displaymath}
\prod_{d=1}^{k+1} \prod_{l=i_d}^{j_d} \gamma_l  = \prod_{l=1}^{m+1} \gamma_{r_l}
\end{displaymath}
(this should be seen as an identity between words over the letters $\gamma_l$).
Thus, we have
\begin{equation} \label{eq-gamma-claim-2}
\prod_{l=1}^{m+1} \gamma_{r_l} = \prod_{l=1}^{m+1} \gamma_l .
\end{equation}
Since $s = r+1$,
there are $p<q$ such that $r_p = r+1$ and $r_q = r$.
We obtain the following (justifications for the equalities can be found below):
\begin{equation*} 
(n_0+rk-1) \cdot {\prod_{l=1}^{m+1} \gamma_{r_l} } 
\stackrel{\text{(a)}}{=}  (n_0+rk-1)\cdot {\prod_{l=q}^{m+1} \gamma_{r_l} }
\stackrel{\text{(b)}}{=}  (n_0+rk)\cdot {\prod_{l=q+1}^{m+1} \gamma_{r_l} } 
\stackrel{\text{(c)}}{=}  n_0+rk \ .
\end{equation*}
Equation (a) holds since the point $n_0+rk-1$ is not moved by the cycles $\gamma_i$ with $i \neq r$.
Since $\gamma_r = \gamma_{r_q}$ it follows that $n_0+rk-1$ is not moved by $\prod_{l=1}^{q-1} \gamma_{r_l}$
(recall that every $\gamma_i$ appears exactly once in the product).
Equation (b) holds since $(n_0+rk-1)\cdot \gamma_{r_q} = (n_0+rk-1)\cdot \gamma_r = n_0+rk$.
Finally, (c) holds since $n_0+rk$ is only moved by $\gamma_r = \gamma_{r_q}$ and $\gamma_{r+1} = \gamma_{r_p}$ 
and, since $p<q$, these two cycles do not appear in the product $\prod_{l=q+1}^{m+1} \gamma_{r_l}$.

On the other hand, using similar arguments, we obtain
\begin{alignat*}{2}
(n_0+rk-1)\cdot {\prod_{l=1}^{m+1} \gamma_l } \ = \ & (n_0+rk-1) \cdot {\prod_{l=r}^{m+1} \gamma_l } & \ = \ & (n_0+rk)\cdot {\prod_{l=r+1}^{m+1} \gamma_l }\\
= \ & (n_0+rk+1)\cdot {\prod_{l=r+2}^{m+1} \gamma_l } & \ = \ & n_0+rk+1 \ .
\end{alignat*}
This contradicts \eqref{eq-gamma-claim-2} and shows Claim~\ref{claim-2}.
\qed

\medskip
\noindent
From Claim~\ref{claim-1} it follows that in equation \eqref{groupfactsubseteq} the intervals $[i_d,j_d]$ for $d \in [1,k+1]$ form a partition of $[1,m+1]$.
Moreover, by Claim~\ref{claim-2} we must have $j_d < i_{d+1}$ for all  $d \in [1,k]$. This implies $i_1 = 1$, $i_{d+1} = j_d+1$ for all 
$d \in [1,k]$ and $j_{k+1} = m+1$. Recall that every generator $\tau_{i,j}$ for $1 \leq i \leq j \leq m$ includes the permutation $\pi_j \in S_n$
acting on $[1,n]$.
Hence, by projecting \eqref{eq-tau-iii} onto $[1,n]$ we get $\pi = \pi_{j_1} \pi_{j_2} \cdots \pi_{j_k}$ with $j_1 < j_2 < \cdots < j_k$.
\end{proof}

\subsection{Factorization in the commutative setting}

\subsubsection{Simple lemmas}

In this section, we collect a few simple lemmas that will be needed for the proof of Theorem~\ref{maintheorem}.

Clearly, the $\naturals$-variants of our factorization problems reduce to the corresponding $\integers$-variants. Vice versa, by adding a large
enough element from $\naturals$ (respectively, $\naturals^*$) to all input elements, one gets:

\begin{lemma}  \label{lemma-Z-to-N}
For all $\mathsf{X} \in \{ \mathsf{KS}, \mathsf{SSS},  \mathsf{SSS}^{\neq}\}$ we have
$\mathsf{pX}[\integers] \leq^\fpt \mathsf{pX}[\naturals]$ and $\mathsf{pX}[\integers^*] \leq^\fpt \mathsf{pX}[\naturals^*]$.
\end{lemma}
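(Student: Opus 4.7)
The plan is to exploit the fact that when we restrict attention to tuples $(x_1,\ldots,x_m)$ with $\sum_{i=1}^m x_i = k$ fixed, shifting every input element by the same constant preserves the equation $\sum_{i=1}^m x_i a_i = a$.

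Concretely, for the reduction $\mathsf{pX}[\integers] \leq^\fpt \mathsf{pX}[\naturals]$, given an input instance $(a, a_1, \ldots, a_m, k)$ I would set
\[
C \ = \ 1 + \max\{|a|,|a_1|,\ldots,|a_m|\},
\]
and define $a_i' := a_i + C \in \naturals$ for every $i \in [1,m]$ and $a' := a + kC \in \naturals$. Then for any $x_1,\ldots,x_m \in \naturals$ with $\sum_i x_i = k$,
\[
\sum_{i=1}^m x_i a_i' \ = \ \sum_{i=1}^m x_i a_i + C\sum_{i=1}^m x_i \ = \ \sum_{i=1}^m x_i a_i + kC,
\]
so $a = \sum_i x_i a_i$ holds if and only if $a' = \sum_i x_i a_i'$ holds. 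This shows the equivalence for $\mathsf{X} \in \{\mathsf{KS},\mathsf{SSS}\}$ (since $\{0,1\} \subseteq \naturals$, the subset sum case is handled by exactly the same argument). For $\mathsf{X} = \mathsf{SSS}^{\neq}$ I would additionally note that the shift $a_i \mapsto a_i + C$ is injective, so pairwise distinctness of the inputs is preserved.

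For the second reduction $\mathsf{pX}[\integers^*] \leq^\fpt \mathsf{pX}[\naturals^*]$, the same idea works coordinate-wise. Given $a, a_1,\ldots,a_m \in \integers^n$, I would choose $C$ strictly larger than the absolute value of every coordinate of every input element, and then shift each $a_i$ by the vector $(C,\ldots,C) \in \naturals^n$ and $a$ by $(kC,\ldots,kC)$. The same computation as above, applied in each coordinate, gives the equivalence of instances, and distinctness is again preserved by the shift.

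Finally I would check that this is an fpt-reduction: $C$ (and each shifted element) can be computed in polynomial time from the input, the parameter $k$ is unchanged, and the output is a valid instance of $\mathsf{pX}[\naturals]$ (respectively $\mathsf{pX}[\naturals^*]$). There is no real obstacle here; the only thing to be careful about is taking $C$ large enough so that $a + kC$ is also non-negative, which is guaranteed by the choice above.
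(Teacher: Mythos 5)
Your proposal is correct and is essentially the same reduction the paper uses: shift every $a_i$ by a sufficiently large nonnegative element $b$ and shift the target by $kb$, relying on the fact that $\sum_i x_i = k$ is fixed so the shift is absorbed exactly. The paper leaves the choice of $b$ implicit, while you instantiate it explicitly and verify the algebra and the preservation of distinctness; the content is identical.
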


\begin{proof}
Consider input vectors $a, a_1, \ldots, a_m \in \integers^n$ and let $k \geq 1$ be the parameter. 
All integers are given in binary representation.  We  then compute a vector $b \in \naturals^n$ such that
$a+kb, a_1+b, \ldots, a_m+b \in \naturals^n$ and take the new input elements $a+kb, a_1+b, \ldots, a_m+b$.
\end{proof}

The next lemma is shown by concatenating the binary representations of $d$ integers $c_1, \ldots, c_d$ to a single binary representation with enough zeros inbetween in order
to avoid unwanted carries. 

\begin{lemma} \label{thm-Zn-to-Z}
For all $\mathsf{X} \in \{ \mathsf{KS}, \mathsf{KS}_{\leq},  \mathsf{SSS}, \mathsf{SSS}_{\leq},  \mathsf{SSS}^{\neq}, \mathsf{SSS}^{\neq}_{\leq}\}$ we have
$\mathsf{pX}[\naturals^*] \leq^\fpt \mathsf{pX}[\naturals]$.
\end{lemma}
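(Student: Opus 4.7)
The plan is to encode each $n$-tuple $v = (v_1, \ldots, v_n) \in \naturals^n$ as a single natural number by concatenating the binary representations of its entries, padded with enough zero bits so that no addition arising in a valid solution can cause a carry between adjacent blocks. Given input $(a, a_1, \ldots, a_m, k)$ for $\mathsf{pX}[\naturals^*]$ with $a, a_1, \ldots, a_m \in \naturals^n$, let $M$ be the maximum entry occurring in $a$ and the $a_i$, set $b = \#(kM)+1$ and $B = 2^b$, and define the encoding $\phi \colon \naturals^n \to \naturals$ by
\[
\phi(v_1, \ldots, v_n) \ = \ \sum_{j=1}^n v_j \, B^{j-1}.
\]
The output instance is $(\phi(a), \phi(a_1), \ldots, \phi(a_m), k)$ for $\mathsf{pX}[\naturals]$. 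Since $n$ is given in unary and $b$ is polynomial in the input size, $\phi$ is computable in polynomial time, and since $k$ is left untouched the map is an fpt-reduction.

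The correctness argument rests on a single observation: in every one of the six listed variants, any witness $(x_1, \ldots, x_m)$ satisfies $\sum_i x_i \leq k$, hence $x_i \leq k$ for all $i$, so each coordinate of $\sum_i x_i a_i$ is bounded above by $kM < B$. Consequently the base-$B$ digits of $\sum_i x_i \phi(a_i)$ are precisely the coordinates of $\sum_i x_i a_i$, that is, $\sum_i x_i \phi(a_i) = \phi\bigl(\sum_i x_i a_i\bigr)$. The forward implication is then immediate. For the converse, suppose $\sum_i x_i \phi(a_i) = \phi(a)$ in $\naturals$ with $(x_1, \ldots, x_m)$ meeting the constraints of the variant. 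Both sides lie below $B^n$ and have coordinatewise equal base-$B$ expansions (those of $\sum_i x_i a_i$ and of $a$, with digits in $[0,B-1]$), so we recover $\sum_i x_i a_i = a$ in $\naturals^n$. Injectivity of $\phi$ further guarantees that pairwise distinctness of the $a_i$ (required for $\mathsf{SSS}^{\neq}$ and $\mathsf{SSS}^{\neq}_{\leq}$) is preserved.

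The only nontrivial point is the choice of $b$: it must be large enough to absorb every coordinate that can appear on either side of the equation across all six variants simultaneously. The uniform bound $\sum_i x_i \leq k$ reduces this to the single inequality $kM < B$, which the choice $b = \#(kM)+1$ enforces. Everything else — the polynomial-time computability of $\phi$, the invariance of the parameter, and the preservation of distinctness — is routine.
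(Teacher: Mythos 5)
Your proof is correct and takes essentially the same route as the paper: both encode a tuple $(v_1,\ldots,v_n)\in\naturals^n$ as $\sum_j v_j B^{j-1}$ for a base $B=2^b$ chosen just large enough that every coordinate sum $\sum_i x_i (a_i)_j \leq kM$ stays strictly below $B$, so that no carries cross block boundaries and the map commutes with the linear combinations allowed by the constraint $\sum_i x_i \leq k$ common to all six variants. The paper writes the construction out only for $\Nnsssr$ and uses $m=\#(ke)$ in place of your $b=\#(kM)+1$, but these are the same encoding up to an inessential extra padding bit; your uniform treatment of all six variants via the single bound $\sum_i x_i \le k$ and the explicit note on injectivity (for $\mathsf{SSS}^{\neq}$) make the argument slightly more self-contained than the paper's.
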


\begin{proof}
We only show $\Nnsssr  \leq^\fpt \Nsssr$, the reduction also works for the other cases.
Let $A \subseteq \naturals^d$, $a \in \naturals^d$ and $k \geq 1$ be the input data for $\Nnsssr$. 
Every number is given in binary representation.

The idea is then to append the binary representations of $d$ integers $c_1, \ldots, c_d$ into a single binary representation with enough zeros inbetween in order
to avoid unwanted carries. Let $e$ be the maximum entry in a vector from $A$ and let $m = \#(ke)$. Note that $\#(ke)$ denotes the number of bits of $ke$.
Consider a tuple $c = (c_1, \ldots, c_d) \in [0,e]^d$ and let $(b_{i,1} \cdots b_{i,m_i})$ be the binary representation of the number $c_i$ with $b_{i,1}$ the least significant
bit. Note that $b_{i,m_i} = 1$ and $m_i \leq m$. We then encode $c$ by the number $\mathsf{code}(c)$ with the binary representation
\[
(b_{1,1} \cdots b_{1,m_1} \!\!\!\! \underbrace{0 0 \cdots 0}_{\text{$m-m_1$ many}} \!\!\!\! b_{2,1} \cdots b_{2,m_2} \!\!\!\! \underbrace{0 0 \cdots 0}_{\text{$m-m_2$ many}} \!\!\!\!  \cdots
b_{d,1} \cdots b_{d,m_d}) .
\]
Let $\mathsf{code}(A) = \{ \mathsf{code}(b) : b \in A \}$.
Then, there is a subset $S \subseteq A$ of size $k$ such that $\sum_{b \in S} b = a$ if and only if there is a subset $S \subseteq \mathsf{code}(A)$ of size $k$ such that $\sum_{b \in S} b = \mathsf{code}(a)$.
\end{proof}

In \cite{vaidyanathan} it was shown that $\sgfactor$ and $\sgfactorleq$ are equivalent with respect to fpt-reductions.
One direction generalizes easily to the following statement (simply add the neutral element $1$ sufficiently many times to the input list):

\begin{lemma} \label{lemma-simple1} 
For all classes $\mathsf{C}$ from Definition~\ref{def-monoid-classes} and $\mathsf{X} \in \{ \mathsf{F}, \mathsf{KS}, \mathsf{SSS} \}$
 we have $\mathsf{pX_{\leq}[C]} \leq \mathsf{pX[C]}$.
\end{lemma}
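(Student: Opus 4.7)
The approach is to pad the input list with copies of the identity element $1_M$ of the monoid $M$. For each class $\mathsf{C}$ in Definition~\ref{def-monoid-classes}, every $M \in \mathsf{C}$ has an identity element that is easily computable from the given representation: the identity map for $T_n$ and $S_n$, the identity permutation for $\mathsf{AbPG}$ and $\mathsf{CycPG}$, the number $0$ for $\integers, \naturals, \mathsf{FCycG}$, and the zero tuple for $\integers^*, \naturals^*, \mathsf{FAbG}$. In each case, $1_M$ can be encoded in polynomial time from $M$'s representation, so the reductions below are all fpt-reductions with parameter unchanged.

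For $\mathsf{pF_{\leq}[C]} \lfpt \mathsf{pF[C]}$ and $\mathsf{pKS_{\leq}[C]} \lfpt \mathsf{pKS[C]}$, the plan is to map an input $(M, a, a_1, \ldots, a_m, k)$ to $(M, a, a_1, \ldots, a_m, a_{m+1}, k)$ with $a_{m+1} = 1_M$. For the factorization problem, any product $a = b_1 \cdots b_l$ with $l \leq k$ and $b_i \in \{a_1, \ldots, a_m\}$ can be extended by $k-l$ copies of $1_M$ without changing its value, and conversely every length-$k$ product over $\{a_1, \ldots, a_{m+1}\}$ reduces to a product of length at most $k$ over $\{a_1, \ldots, a_m\}$ after dropping the identity factors. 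For the knapsack problem, one analogously uses $x_{m+1} := k - \sum_{i=1}^m x_i \geq 0$ to convert $\sum_{i=1}^m x_i \leq k$ into $\sum_{i=1}^{m+1} x_i = k$.

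For $\mathsf{pSSS_{\leq}[C]} \lfpt \mathsf{pSSS[C]}$, a single identity copy does not suffice because the coefficients $x_i$ are restricted to $\{0,1\}$. Instead, I would map $(M, a, a_1, \ldots, a_m, k)$ to $(M, a, a_1, \ldots, a_m, b_1, \ldots, b_k, k)$ where $b_1 = b_2 = \cdots = b_k = 1_M$. Then a selection of $j \leq k$ elements from $\{a_1, \ldots, a_m\}$ summing (or, in the non-commutative cases, multiplying) to $a$ can be completed to a selection of exactly $k$ elements by also picking $k-j$ of the padding copies, and conversely any selection of exactly $k$ elements from the extended list, after dropping those taken from $\{b_1, \ldots, b_k\}$, gives a selection of at most $k$ elements from the original list with the same value.

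I do not expect a substantive obstacle; the main things to verify are (i) the identity really is computable in polynomial time from the monoid representation for every class in Definition~\ref{def-monoid-classes}, which is immediate case by case, and (ii) the reduction is an fpt-reduction, which holds because the output size is bounded by $\poly(n) + k \cdot \poly(n)$ (with $n$ the input size) and the parameter $k$ is preserved.
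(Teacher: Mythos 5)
Your approach matches the paper's own proof exactly: add a single copy of the identity to the generator list for the $\mathsf{pF}$ and $\mathsf{pKS}$ cases, and $k$ copies for the $\mathsf{pSSS}$ case. The extra remarks on computability of $1_M$ and on the reduction being an fpt-reduction are correct and only make explicit what the paper leaves implicit.
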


\begin{proof}
For $\mathsf{X} \in \{ \mathsf{F}, \mathsf{KS}\}$ it suffices to add the neutral element $1 \in M$ to the input sequence
$a_1, \ldots, a_m$. For $\Csssleq \leq^\fpt \Csss$, we have to add $k$ copies of $1$ to the input sequence.
\end{proof}

\begin{lemma}\label{lemma-simple2}
If $\mathsf{X} \in \{ \mathsf{F}, \mathsf{KS}, \mathsf{SSS}, \mathsf{SSS}^{\neq} \}$ and
$\mathsf{C}$ is one of the classes from Definition~\ref{def-monoid-classes}, then 
$\mathsf{pX[C]}  \leq^\fpt \mathsf{pX_{\leq}[C]}$.
\end{lemma}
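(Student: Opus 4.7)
The plan is to encode a length counter into every input element so that any factorization of length at most $k$ of the modified target is forced to have length exactly $k$. Concretely, pick an element $\beta$ of order $N > k$ that commutes with every generator $a_i$, replace each $a_i$ by $\tilde a_i := a_i\,\beta$ and the target $a$ by $\tilde a := a\,\beta^k$, and keep the parameter $k$. Since $a_i$ and $\beta$ commute, $\tilde a_{i_1} \cdots \tilde a_{i_h} = (a_{i_1}\cdots a_{i_h})\,\beta^h$, so a $\Cfactorleq$-solution of $\tilde a$ forces $\beta^h = \beta^k$, hence $h \equiv k \pmod{N}$, and with $0 \le h \le k < N$ this yields $h = k$; the $a$-component then recovers the original factorization equation. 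The same argument covers $\Cksleq$, $\Csssleq$ and $\Csssrleq$ with $h = \sum_{i=1}^m x_i$, and distinctness is preserved for $\mathsf{SSS}^{\neq}$ since $a_i \mapsto a_i\beta$ is injective.

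It remains to realize $\beta$ inside each class of Definition~\ref{def-monoid-classes}. For $\mathsf{C} \in \{\mathsf{T}_*, \mathsf{S}_*, \mathsf{AbPG}\}$ I let $\beta$ be a $(k+1)$-cycle on $k+1$ fresh points disjoint from $[1,n]$; disjoint supports yield commutativity of $\beta$ with every $a_i$ and preserve abelianness where required. For $\mathsf{C} \in \{\integers^*, \naturals^*, \mathsf{FAbG}\}$ I append one extra coordinate (in $\integers$, $\naturals$, or $\integers_{k+1}$), set to $1$ in every $\tilde a_i$ and to $k$ in $\tilde a$. For the one-dimensional cases $\mathsf{C} \in \{\integers, \naturals\}$ there is no spare coordinate, so instead I set $\tilde a_i := (k+1)a_i + 1$ and $\tilde a := (k+1)a + k$. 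A solution with $\sum_i x_i \le k$ then rewrites as $(k+1)\bigl(\sum_i x_i a_i - a\bigr) = k - \sum_i x_i$, whose right-hand side lies in $[0,k]$ while the left-hand side is a multiple of $k+1$, so both sides vanish and we recover $\sum_i x_i = k$ together with $\sum_i x_i a_i = a$.

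The delicate cases are $\mathsf{FCycG}$ and $\mathsf{CycPG}$, where the extended instance must remain cyclic. For $\mathsf{FCycG} = \{\integers_n : n \ge 2\}$ I let $N$ be the smallest prime greater than $\max(k, n)$ (polynomial time by Bertrand's postulate and polynomial-time primality testing); then $\gcd(N, n) = 1$, and the Chinese-remainder isomorphism $\phi : \integers_n \times \integers_N \to \integers_{nN}$ from Theorem~\ref{CRT} lets me take $\tilde a_i := \phi(a_i, 1)$ and $\tilde a := \phi(a, k)$ in $\integers_{nN}$. For $\mathsf{CycPG}$ I first extract a single generator $\pi \in S_n$ of the given cyclic group via Appendix~\ref{appendix-cyclic}, and again take $N$ to be the smallest prime greater than $\max(k, n)$; since every prime factor of $\ord(\pi)$ divides some cycle length of $\pi$ and so is at most $n$, we have $\gcd(N, \ord(\pi)) = 1$. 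Letting $\beta$ be an $N$-cycle on $N$ fresh points, $\pi\beta$ has order $\ord(\pi)\cdot N$ and $\langle\pi\beta\rangle \cong \integers_{\ord(\pi)} \times \integers_N$ is cyclic; by CRT each $\tilde a_i = a_i\beta$ and $\tilde a = a\beta^k$ lies in $\langle\pi\beta\rangle$, so the extended instance is a valid $\mathsf{CycPG}$ input.

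All constructions are computable in polynomial time in the input size and leave the parameter $k$ unchanged, so they are fpt-reductions. The main obstacle is precisely the two cyclic cases: the naive choice $N = k+1$ can share a factor with the order of the underlying cyclic group and thereby destroy cyclicity of the extension, which is why $N$ must be chosen coprime to that order and, for $\mathsf{CycPG}$, a single cyclic generator $\pi$ must first be extracted using Appendix~\ref{appendix-cyclic}.
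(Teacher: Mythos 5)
Your overall device --- adjoin a commuting ``counter'' element $\beta$ of order greater than $k$, replace each $a_i$ by $a_i\beta$ and the target $a$ by $a\beta^k$, so that any factorization of length at most $k$ is forced to have length exactly $k$ --- is precisely the construction used in the paper. For $\mathsf{T}_*$, $\mathsf{S}_*$, $\mathsf{AbPG}$, $\integers^*$, $\naturals^*$, $\mathsf{FAbG}$ and $\mathsf{CycPG}$ your realizations agree with the paper's (a fresh disjoint cycle, an extra coordinate, or a $p$-cycle with $p>\max(n,k)$ prime for $\mathsf{CycPG}$). Your handling of the one-dimensional cases $\integers$ and $\naturals$ via $\tilde a_i=(k+1)a_i+1$ and $\tilde a=(k+1)a+k$ is a genuinely different and cleaner step: the paper instead detours through $\naturals\times\naturals$ (using Lemma~\ref{lemma-Z-to-N} and Lemma~\ref{thm-Zn-to-Z}), while your direct construction works in one line and preserves distinctness for $\mathsf{SSS}^{\neq}$.

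The one genuine gap is in $\mathsf{FCycG}$. There the modulus $n$ is given in \emph{binary}, so ``let $N$ be the smallest prime greater than $\max(k,n)$'' is not a valid deterministic step: Bertrand's postulate guarantees a prime in $(n,2n)$, but that interval contains $n$ candidates, exponentially many in $\#(n)$, and no deterministic algorithm is known that locates a prime above a binary-encoded $n$ in time $\poly(\#(n))$; the $f(k)$ allowance in the definition of an fpt-reduction does not help once $n>k$. (Your prime choice is unproblematic for $\mathsf{CycPG}$, where $n$ is unary, so a linear scan over $O(n)$ candidates is polynomial.) The repair is exactly what the paper does: you only need a modulus $N>k$ that is coprime to $n$, and $N=nk+1$ has both properties for free and is trivially computable; then $\integers_n\times\integers_{nk+1}\cong\integers_{n(nk+1)}$ by Theorem~\ref{CRT}, and the rest of your argument goes through unchanged.
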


\begin{proof}
Let us first explain the idea for $\sgfactor \leq^\fpt \sgfactorleq$ and consider the symmetric group $S_n$ and the elements $\pi, \pi_1, \ldots, \pi_m \in S_n$.
The idea is to work in the group $S_n \times \integers_{k+1} \leq S_{n+k+1}$. For this
take for $\tau$ a cycle of length $k+1$ on the set $[n+1,n+k+1]$ and view $\pi,\pi_1,\dots,\pi_m$ as elements of $S_{n+k+1}$.
Then we have $\pi \in \{ \pi_1, \ldots, \pi_m\}^k$ if and only if $\pi\tau^k \in \{ \pi_1\tau, \ldots, \pi_m\tau\}^{\le k}$. The same idea works (with small modifications) for the classes
$\mathsf{T}_*$, $\mathsf{CycPG}$, $\mathsf{AbPG}$, $\mathsf{FAbG}$, $\integers^*$, and $\naturals^*$.

For $\mathsf{CycPG}$ we have to take a cycle of length $p > \max(n,k)$ for a prime $p$ instead of the
cycle $\tau$ of length $k+1$. 
Since $n$ is given in unary notation, such a prime can be found in polynomial time. Then, for every $\pi \in S_n$, the group $\langle \pi \rangle \times \langle \tau \rangle$
is again cyclic and the cycle $\tau$ has length at least $k+1$.

For $\mathsf{C} = \mathsf{FCycG}$ we explain the idea for $\fcgks \leq^\fpt \fcgksleq$. Let $z,z_1,\dots,z_m \in \integers_n$ and $k \in \naturals$ be the input of $\fcgks$. We will work in the group $\integers_n \times \integers_{nk+1}$. There are numbers $x_1,\dots,x_m \in \naturals$ such that
\begin{displaymath}
\sum_{i=1}^m x_iz_i \equiv z \bmod n \text{ with } \sum_{i=1}^m x_i = k
\end{displaymath}
if and only if
\begin{displaymath}
\sum_{i=1}^m x_i(z_i,1) = (z,k) \text{ with } \sum_{i=1}^m x_i \leq k
\end{displaymath}
in which the computations in the first coordinate are modulo $n$ and in the second modulo $(nk+1)$. Moreover,
since $n$ and $nk+1$ are coprime, we have $\integers_n \times \integers_{nk+1} \cong \integers_{n(nk+1)}$ and the latter is a cyclic group.
An isomorphism $h: \integers_n \times \integers_{nk+1} \to \integers_{n(nk+1)}$ is obtained from Theorem~\ref{CRT}, which also shows
that the values
$h(z_1,1), \ldots, h(z_m,1),h(z,k) \in \integers_{n(nk+1)}$ can be computed in polynomial time. 

Analogously we obtain $\mathsf{pX[FCycG]} \leq^\fpt \mathsf{pX_{\leq}[FCycG]}$ for all $\mathsf{X} \in \{ \mathsf{F}, \mathsf{SSS}, \mathsf{SSS}^{\neq} \}$.
Finally, for $\integers$ we first reduce $\mathsf{pX[\integers]}$ to $\mathsf{pX[\naturals]}$ using Lemma~\ref{lemma-Z-to-N}. The latter is then reduced to 
$\mathsf{pX_{\leq}[\naturals \times \naturals]}$ using the above construction. Finally 
$\mathsf{pX_{\leq}[\naturals \times \naturals]}$ can be reduced to $\mathsf{pX_{\leq}[\naturals]}$
by Lemma~\ref{thm-Zn-to-Z}.
\end{proof}

\begin{lemma} \label{thm-rep-free}
We have $\Csss \leq^\fpt \Csssr$ for each of the classes from Definition~\ref{def-monoid-classes}
.
\end{lemma}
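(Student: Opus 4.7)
The plan is to reduce $\Csss$ to $\Csssr$ by a uniform ``tag-and-cancel'' construction that doubles the number of items and the parameter.  Given an input $(M, a, a_1, \dots, a_m, k)$ for $\Csss$, I would embed $M$ into some $M' \in \mathsf{C}$ that also contains an auxiliary abelian ``tag'' group $H$ acting on a region disjoint from the action of $M$, so that tag elements commute with all original elements (even in the non-commutative classes $\mathsf{S}_*$ and $\mathsf{T}_*$).  For each $i \in [1,m]$ I introduce two new elements $b_i$ realising $(a_i, h_i)$ and $c_i$ realising $(1_M, h_i^{-1})$, where the tags $h_1, \dots, h_m \in H$ are chosen so that (i) the $2m$ elements $b_1, c_1, \dots, b_m, c_m$ are pairwise distinct, and (ii) any signed identity $\sum_{i \in A} h_i = \sum_{j \in B} h_j$ in $H$ forces $A = B$.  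I would list the new generators in the order $b_1, c_1, b_2, c_2, \dots, b_m, c_m$, take the target to be $(a, 1_H)$, and the new parameter to be $k' = 2k$.

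Correctness is then straightforward.  A $\Csss$-solution $x \in \{0,1\}^m$ with $\sum_i x_i = k$ and $\prod_i a_i^{x_i} = a$ translates into the $\Csssr$-solution that selects both $b_i$ and $c_i$ exactly when $x_i = 1$: in the listed order, each $h_i$ is cancelled immediately by the following $h_i^{-1}$, so the tag part equals $1_H$ and the $M$-part reads off $a$.  Conversely, any $\Csssr$-solution has tag part $1_H$, so condition (ii) forces each index $i$ to contribute either both of $b_i, c_i$ or neither, and the indices of chosen pairs yield a $\Csss$-solution of size $k'/2 = k$.

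The remaining work is to realise $M'$ and $H$ inside each class of Definition~\ref{def-monoid-classes}.  For $\{\integers\}, \{\naturals\}, \integers^*, \naturals^*$ I would place the tags in a fresh coordinate via $h_i = B^i$ for a sufficiently large base $B$, merge the extra coordinate into the original one via the padding idea of Lemma~\ref{thm-Zn-to-Z}, and apply the shift of Lemma~\ref{lemma-Z-to-N} to stay in $\naturals$ when needed.  For $\mathsf{FAbG}$ I take $H = \integers_N$ with $N$ a large prime coprime to the orders in $G$; for $\mathsf{FCycG}$ the CRT isomorphism $\integers_n \times \integers_N \cong \integers_{nN}$ of Theorem~\ref{CRT} keeps the ambient group cyclic.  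For $\mathsf{AbPG}$ and $\mathsf{CycPG}$ I realise $H$ by disjoint cycles on fresh points, choosing lengths coprime to the orders in the input for the cyclic case (so that a generator $\pi \rho$ with disjoint supports has cyclic closure $\langle \pi\rangle \times \langle \rho\rangle$).  For $\mathsf{S}_*$ and $\mathsf{T}_*$ I would embed $H$ as a finite abelian permutation group on fresh points (for example $\integers_3^m$ acting as $m$ disjoint $3$-cycles on $[n+1, n+3m]$); since tags and original factors act on disjoint domains they commute, and the ordered product respects the same decomposition as in the commutative case.

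The main delicate points will be (a) keeping $M' \in \mathsf{C}$, in particular ensuring cyclicity for $\mathsf{CycPG}$ and $\mathsf{FCycG}$ via coprime CRT combinations, and (b) ruling out degenerate coincidences such as $b_i = c_j$ that can arise when some $a_i = 1_M$.  I would defuse the second point by choosing the tag group to have odd exponent (so that $h_i \neq h_i^{-1}$) and the tag values to be pairwise non-opposite (so that $h_i \neq -h_j$ for $i \neq j$), both of which are easy to arrange in every construction above (for example using $\integers_3^m$ with standard basis tags, or $\integers_N$ with $h_i = B^i$ and $N > 2\sum_i B^i$).
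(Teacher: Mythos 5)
Your proof is correct, and it uses a genuinely different gadget from the paper's. The paper works in $\integers_2^{2m+1} \times \integers_{2k+2} \times S_n$, turns each $\pi_i$ into a \emph{single} new element whose tag is a unit vector at an even-indexed $\integers_2$-coordinate, and then adds $\Theta(m^2)$ \emph{filling elements} (one per contiguous odd-to-odd interval of coordinates); the new parameter is $2k+1$, and the $\integers_{2k+2}$-coordinate is a counter forcing exactly $k$ of the original $m$ items to be chosen. Your construction instead replaces each $a_i$ by the \emph{pair} $b_i=(a_i,h_i)$, $c_i=(1_M,h_i^{-1})$ whose tags cancel, and chooses dissociated tags $h_1,\dots,h_m$ (no nontrivial $\{-1,0,1\}$-combination vanishes) so that a zero tag-sum forces each pair to be selected together or not at all; this yields a leaner reduction with only $2m$ new items, parameter $2k$, and no counting coordinate or quadratic filler family. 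Both approaches face the same constraint of keeping the tag group inside $\mathsf{C}$; the one genuinely loose spot in your writeup is the phrase ``$H=\integers_N$ with $N$ a large prime'' for $\mathsf{FCycG}$ (and implicitly $\mathsf{CycPG}$): exponentially large primes are not known to be deterministically poly-time computable, and for $\mathsf{CycPG}$ a single cycle of length $N>2\sum_i B^i$ has super-polynomial support. The fix is the same CRT-with-small-primes trick the paper uses -- take $H=\prod_j \integers_{p_j}$ for $O(m+\log n)$ distinct small odd primes coprime to $n$ (or, for $\mathsf{CycPG}$, realised as disjoint cycles of those prime lengths on fresh points), and use the standard basis as tags rather than $B^i$. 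Your other choices ($\integers_3^m$ for $\mathsf{S}_*$, $\mathsf{T}_*$, $\mathsf{AbPG}$; a fresh $\integers$-coordinate with base-$B$ tags plus Lemmas~\ref{lemma-Z-to-N} and~\ref{thm-Zn-to-Z} for the integer classes; a large composite modulus for $\mathsf{FAbG}$) are all poly-time constructible and handle the distinctness degeneracies exactly as you describe.
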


\begin{proof} We first show $\sgsss \leq^\fpt \sgsssr$.
Consider input data $\pi, \pi_1, \ldots, \pi_m \in S_n$ and $k$ for $\sgsss$.
We will work with the group $\integers_2^{2m+1} \times \integers_{2k+2} \times S_n \leq S_{4m+2k+n+4}$.

Consider $i \in [1,m]$. We replace the element $\pi_i$ in the sequence $\pi_1, \ldots, \pi_m$ by the group element
\[ \rho_i = (e^{2m+1}_{2i}, 1, \pi_i) \in \integers_2^{2m+1} \times \integers_{2k+2} \times S_n,
\]
where $e^{2m+1}_{2i} \in \integers_2^{2m+1}$ is the $2m+1$ dimensional unit vector with a $1$ at the $(2i)^{\text{th}}$ coordinate (all other entries are zero).
Note that the $\rho_i$ are pairwise different.
We then add to the sequence all filling elements
\begin{equation} \label{filling}
(0, \ldots, 0, 1, \ldots, 1, 0, \ldots, 0, \mathsf{id}) \in \integers_2^{2m+1} \times \integers_{2k+2} \times S_n ,
\end{equation}
where the $1$'s form a contiguous non-empty block that starts at some position $2i+1$ and ends at some position $2j+1$
with $0 \leq i \leq j \leq m$. 
With $\mathsf{id}$ we denote the identity permutation.
Let $\rho_{m+1}, \ldots, \rho_{m+\ell}$ be a list of all filling elements (the precise order does not matter, since the filling elements pairwise
commute). Note that the elements in the list $\rho_{1}, \ldots, \rho_{m+\ell}$ are pairwise different.

Finally, let $\rho = (1, 1, \ldots, 1, k, \pi) \in \integers_2^{2m+1} \times \integers_{2k+2} \times S_n$. We claim that there is a subset $I \subseteq [1,m]$ such that $|I| = k$ and $\prod_{i \in I} \pi_i = \pi$ 
if and only if there is a subset $J  \subseteq [1,m+\ell]$ with $|J| = 2k+1$ and $\prod_{j \in J} \rho_j = \rho$.\footnote{When writing $\prod_{i \in I} \pi_i$ we assume that the product goes over the set
$I$ in ascending order and similarly for other products.}

First assume that there is $J  \subseteq [1,m+\ell]$ with $|J| = 2k+1$ and $\prod_{j \in J} \rho_j = \rho$. The $k$ in the second last position of $\rho$ forces $J$ to contain exactly
$k$ indices from $[1,m]$. Let $I = J \cap [1,m]$ so that $|I| = k$.
Projecting on the last coordinate yields $\prod_{i \in I} \pi_i = \pi$. 

For the other direction let $I \subseteq [1,m]$ with $|I|=k$ and $\prod_{i \in I} \pi_i = \pi$. 
Consider now the group element $\prod_{i \in I} \rho_i$. Its last entry is $\pi$ and the second last entry is $k$. Moreover, the first 
$2m+1$ entries of $\prod_{i \in I} \rho_i$ contain exactly $k$ ones (all other entries are zero) and two ones are separated by at least one zero.
The zero gaps can be filled with exactly $k+1$ filling elements of the form \eqref{filling}. Hence, there is a subset $J \subseteq [m+1,m+\ell]$
such that $\prod_{i \in I \cup J} \rho_i = \rho$.
 
 The same reduction can be used in a similar way for the other classes from Definition~\ref{def-monoid-classes}.
 For $\mathsf{C} = \mathsf{FCycG}$ assume that we have a $\fcgsss$-instance over the cyclic group $\integers_n$
 with $n$ given in binary notation.
 Let $p_1 < p_2 < \cdots < p_{2m+2}$ be the first $2m+2$ primes that are larger than $2k+1$ and do not divide $n$.
 Since $n$ has at most $\log_2(n)$ different prime divisors we need to check at most the first $2k+1+\lfloor\log_2(n)\rfloor+2m+2$ primes. Hence, we can find these primes in polynomial time.
 Then we can work in the group $\prod_{i=1}^{2m+2} \integers_{p_i} \times \integers_n \cong \integers_{N}$ 
 for $N=n \cdot \prod_{i=1}^{2m+2} p_i$ which is again a cyclic group.
Note that the binary encoding of $N$ can be computed in polynomial time.
 Moreover we can compute the corresponding numbers in $\integers_{N}$ in polynomial time.
 
 For $\mathsf{C} = \mathsf{CycPG}$ we replace the cyclic permutation
 group $G \leq S_n$ by $\prod_{i=1}^{2m+2} \integers_{p_i} \times G$, where $p_1 < p_2 < \cdots < p_{2m+2}$ are primes with $p_1 \geq \max\{n+1, 2k+2\}$.
 This ensures that $\prod_{i=1}^{2m+2} \integers_{p_i} \times G \leq S_{n'}$ is again a cyclic permutation group with $n' = \sum_{i=1}^{2m+2} p_i + n$.
 
 Finally for $\mathsf{C} = \integers$ and  $\mathsf{C} = \naturals$ we obtain with Lemmas~\ref{lemma-Z-to-N} and \ref{thm-Zn-to-Z} the following reduction:
 $\Nsss \lfpt \Zsss \lfpt  \Znsss \lfpt \Znsssr \lfpt \Nnsssr \lfpt \Nsssr \lfpt \Zsssr$.
\end{proof}

\begin{corollary} \label{coro-subset}
Let $\mathsf{C}$ be one of the classes from Definition~\ref{def-monoid-classes}.
Then the problems $\Csss$, $\Csssr$, $\Csssleq$, $\Csssrleq$ 
 are equivalent with respect to fpt-reductions.
\end{corollary}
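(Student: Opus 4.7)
The plan is to close a single four-cycle of fpt-reductions
\[
\Csss \ \lfpt \ \Csssr \ \lfpt \ \Csssrleq \ \lfpt \ \Csssleq \ \lfpt \ \Csss,
\]
using the three previously established lemmas plus one identity reduction. Since fpt-reductions compose, any such cycle immediately yields mutual fpt-equivalence between all four problems, simultaneously for every class $\mathsf{C}$ in Definition~\ref{def-monoid-classes}.

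Concretely, I would combine the following four ingredients. The step $\Csss \lfpt \Csssr$ is exactly Lemma~\ref{thm-rep-free}. The step $\Csssr \lfpt \Csssrleq$ is Lemma~\ref{lemma-simple2} applied with $\mathsf{X} = \mathsf{SSS}^{\neq}$. The step $\Csssrleq \lfpt \Csssleq$ is the identity reduction: any $\Csssrleq$-instance is literally a $\Csssleq$-instance whose input list happens to be pairwise distinct, and the yes/no answer is unchanged because the pairwise-distinctness is only a syntactic restriction on admissible inputs, not an additional constraint on the existence of a solution. Finally, $\Csssleq \lfpt \Csss$ is Lemma~\ref{lemma-simple1} applied with $\mathsf{X} = \mathsf{SSS}$.

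The only point that warrants verification, and the nearest thing to an obstacle here, is that Lemma~\ref{lemma-simple2} really does supply the reduction $\Csssr \lfpt \Csssrleq$, i.e.\ that its constructions preserve pairwise distinctness of the input list. Inspecting its proof, in every case the list elements are modified by the action of a common auxiliary element (multiplication by a fixed cycle $\tau$ for the permutation-group classes, addition of a fixed element in the additive cases, and the isomorphism $h$ from Theorem~\ref{CRT} for the finite cyclic case). These maps are injective, so pairwise distinct inputs remain pairwise distinct, and the reduction goes through without modification. With that observation the four-cycle is complete and Corollary~\ref{coro-subset} follows.
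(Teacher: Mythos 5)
Your proposal is correct and uses essentially the same four reductions as the paper (Lemma~\ref{thm-rep-free}, Lemma~\ref{lemma-simple2} for $\mathsf{SSS}^{\neq}$, the trivial drop-the-distinctness-restriction reduction, and Lemma~\ref{lemma-simple1}), merely entering the cycle at $\Csss$ rather than at $\Csssrleq$. Your extra check that the Lemma~\ref{lemma-simple2} constructions preserve pairwise distinctness is harmless but unnecessary, since that lemma is already stated and proved for $\mathsf{X} = \mathsf{SSS}^{\neq}$.
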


\begin{proof}
By Lemmas~\ref{lemma-simple1} and \ref{lemma-simple2} we have  
\[ \Csss \equiv^\fpt \Csssleq.\]
By Lemma~\ref{thm-rep-free} we have 
\[ \Csss \equiv^\fpt \Csssr\] ($\Csssr \leq^\fpt \Csss$ is trivial). Moreover, we have
\[ \Csssrleq \leq^\fpt \Csssleq \leq^\fpt \Csss \leq^\fpt  \Csssr \leq^\fpt \Csssrleq, 
\] 
where the final reduction follows from Lemma~\ref{lemma-simple2}.
\end{proof}

For the commutative classes from Definition~\ref{def-monoid-classes} with the exception of $\mathsf{FAbG}$ and $\mathsf{AbPG}$,
we can show the equivalence (with respect to fpt-reductions) for all our factorization problems:

\begin{theorem}\label{maintheorem}
All problems $\mathsf{pX}[\mathsf{C}]$ for 
\begin{itemize}
\item $\mathsf{X} \in \{\mathsf{KS}, \mathsf{KS}_\leq, \mathsf{SSS},  \mathsf{SSS}_\leq, \mathsf{SSS}^{\neq}, \mathsf{SSS}^{\neq}_\leq \}$ and
\item $\mathsf{C} \in \{\mathsf{CycPG}, \mathsf{FCycG},  \naturals, \integers, \naturals^*, \integers^*\}$
\end{itemize}
 are equivalent with respect to fpt-reductions.
 Moreover all these problems are
$\mathsf{W}[1]$-hard and contained in $\mathsf{W}[3]$.
\end{theorem}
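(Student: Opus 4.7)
The strategy is to place all $36$ listed problems into a single fpt-equivalence class by closing, for each class $\mathsf{C}$ in the list, the circular chain
\[ \Cks \lfpt \Csss \lfpt \Zsssr \equiv^{\fpt} \Zsss \lfpt \Cks. \]
The reduction $\Cks \lfpt \Csss$ is Lemma~\ref{lemma-simple}, the equivalence $\Zsssr \equiv^{\fpt} \Zsss$ is Corollary~\ref{coro-subset}, and by Corollary~\ref{coro-subset} together with Lemmas~\ref{lemma-simple1} and~\ref{lemma-simple2} the four residual variants $\Cksleq, \Csssleq, \Csssr, \Csssrleq$ fold onto this chain as well. Hence only the two flanking families of reductions $\Csss \lfpt \Zsssr$ and $\Zsss \lfpt \Cks$ need to be constructed for every $\mathsf{C}$; after that, $\mathsf{W}[1]$-hardness and $\mathsf{W}[3]$-containment follow from the known bounds on $\Zsss$~\cite{downeyfellows} and $\Zsssr$~\cite{bussislam}.

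For $\Csss \lfpt \Zsssr$, the classes $\naturals, \integers, \naturals^*, \integers^*$ are handled by composing Lemmas~\ref{lemma-Z-to-N}, \ref{thm-Zn-to-Z} and \ref{thm-rep-free} with trivial embeddings. For $\mathsf{C} = \mathsf{FCycG}$ I would lift an $\integers_n$-instance to $\integers \times \integers$ via $z_i \mapsto (\tilde z_i, 1)$ with target $(\tilde z, k)$ (so that the second coordinate enforces $|S| = k$), augmenting the item list with $k$ copies of $(n,0)$, $k$ copies of $(-n,0)$, and $2k$ copies of $(0,0)$: the signed $n$-copies realise the modular shift $(q-p)n$ appearing in any size-$k$ integer lift (always confined to an $O(k)$-wide interval), while the zero fillers pad the new parameter to $3k$. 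Lemmas~\ref{lemma-Z-to-N}, \ref{thm-Zn-to-Z} and \ref{thm-rep-free} then drive the instance down to $\Zsssr$. For $\mathsf{C} = \mathsf{CycPG}$ the order $N$ of a generator $\pi$ is the lcm of its cycle lengths (of polynomial binary size by Landau's bound, and with prime-power factorisation readable directly off the cycle decomposition), so discrete logarithms of the input elements in $\langle\pi\rangle$ are computable in polynomial time, reducing the instance to $\mathsf{pSSS}[\mathsf{FCycG}]$.

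For $\Zsss \lfpt \Cks$ the point is to encode $\Zsss$ as a knapsack in such a way that the $0$-$1$ constraint becomes automatic. I would use a base-$B$ block construction with $B>k$, attaching to each $a_i$ an indicator block that distinguishes $i$, together with carefully chosen fillers that keep the parameter a function of $k$ alone: because $B>k$, using any item twice overshoots its indicator block and breaks the target, so the knapsack multiplicities are driven into $\{0,1\}$ even though the variables range over $\naturals$. Trivial embeddings and Lemmas~\ref{lemma-Z-to-N}, \ref{thm-Zn-to-Z} propagate this to $\mathsf{pKS}[\mathsf{C}]$ for $\mathsf{C} \in \{\integers, \naturals^*, \integers^*\}$. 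For $\mathsf{pKS}[\mathsf{FCycG}]$ one takes a modulus strictly larger than the maximum attainable knapsack sum (so $\equiv \pmod n$ becomes equality in $\integers$); for $\mathsf{pKS}[\mathsf{CycPG}]$ the ambient cyclic group $\integers_N$ inside the encoding is realised by a permutation whose disjoint cycles have pairwise coprime small prime lengths with product $N$, and only polynomially many primes of polynomial size are needed, so the unary-encoded degree remains polynomial.

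The main technical obstacle is precisely this reduction $\Zsss \lfpt \Nks$, together with its cyclic analogue $\mathsf{pSSS}[\mathsf{FCycG}] \lfpt \Zsssr$: keeping the parameter $f(k)$ while simultaneously enforcing the $0$-$1$ condition and preserving the target equation is delicate, because naive padding would add one indicator block per item and blow the parameter up to $\Omega(m)$. The key observation making $O(k)$ fillers sufficient in the $\mathsf{FCycG}$ direction is that the modular ambiguity in any size-$k$ integer lift of an $\integers_n$-solution is already confined to an $O(k)$-wide interval, and the analogous key observation in the $\Zsss \lfpt \Nks$ direction is that $B>k$ by itself forces the carry-free range of knapsack multiplicities to coincide with $\{0,1\}$.
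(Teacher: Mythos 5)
Your plan is correct and is essentially the paper's proof repackaged into a single per-class cycle $\Cks \lfpt \Csss \lfpt \Zsssr \equiv^{\fpt} \Zsss \lfpt \Cks$: the key constructions you sketch (the unit-vector/indicator-block plus contiguous-filler encoding for $\Csss \lfpt \Cksleq$, the $\integers_n \to \integers\times\integers$ lift with $(-n,0)$-copies for $\fcgsss$, discrete logarithm via the cycle structure for $\mathsf{CycPG}$, and the distinct-small-primes/CRT embedding of $\integers_N$ into $S_{n'}$) are exactly those underlying Lemmas~\ref{lemma-Z-to-N}--\ref{N-CycPG} and Theorem~\ref{thm-last-piece}, and the $\mathsf{W}[1]$/$\mathsf{W}[3]$ bounds are inherited from $\Zsss$ in the same way. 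The only cosmetic divergence is that the paper factors the argument into Corollaries~\ref{coro-subset} and \ref{coro-comm-subset} followed by Theorem~\ref{thm-last-piece} rather than running one closed loop per class.
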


We obtain Theorem~\ref{maintheorem} from Theorem~\ref{thm-last-piece} that will be shown in the next section.
The statements concerning $\mathsf{W}[1]$ and $\mathsf{W}[3]$ from Theorem~\ref{maintheorem}
follow from \cite{downeyfellows} (where $\mathsf{W}[1]$-hardness of $\Zsss$ was shown) and 
\cite{bussislam} (where membership of $\Zsssr$ in $\mathsf{W}[3]$ was shown).

\subsubsection{Commutative subset sum problems} \label{sec-commutative}

\begin{lemma} \label{lemma-I-MCK}
For all $\mathsf{X} \in \{\mathsf{KS}, \mathsf{KS}_\leq, \mathsf{SSS},  \mathsf{SSS}_\leq, \mathsf{SSS}^{\neq}, \mathsf{SSS}^{\neq}_\leq \}$  we have
\[ \mathsf{pX}[\mathsf{AbPG}] \leq^\fpt \mathsf{pX}[\mathsf{FAbG}] \quad \text{ and } \quad \mathsf{pX}[\mathsf{CycPG}] \leq^\fpt \mathsf{pX}[\mathsf{FCycG}].
\]
\end{lemma}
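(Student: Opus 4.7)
The plan is, in both cases, to replace the permutation representation of $G = \langle \pi_1, \ldots, \pi_m \rangle$ by an isomorphic description as a direct product of cyclic groups, and to translate the input tuple accordingly. Concretely, I will construct an isomorphism $\varphi \colon G \to \prod_{l=1}^d \integers_{n_l}$ in polynomial time and output the $\mathsf{pX}[\mathsf{FAbG}]$-instance with target $\varphi(\pi)$, generators $\varphi(\pi_1), \ldots, \varphi(\pi_m)$, and the same parameter $k$. Since $\varphi$ is a group isomorphism, every equation of the form $\pi = \pi_1^{x_1} \cdots \pi_m^{x_m}$ in $G$ (with or without the side constraints defining $\mathsf{KS}$, $\mathsf{SSS}$, or the $\leq$/$\neq$ variants) is equivalent to the corresponding equation in $\prod_{l=1}^d \integers_{n_l}$, which establishes correctness uniformly for all six choices of $\mathsf{X}$. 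If $\pi \notin G$---a condition checkable in polynomial time by \cite{FurstHL80}---a trivial no-instance is output.

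For the cyclic case, I will first apply the polynomial-time algorithm from Appendix~\ref{appendix-cyclic} to obtain a single permutation $\sigma$ with $\langle \sigma \rangle = G$. Let $\gamma_1, \ldots, \gamma_s$ be the disjoint cycles of $\sigma$ of lengths $\ell_1, \ldots, \ell_s$, so that $N := \ord(\sigma) = \lcm(\ell_1, \ldots, \ell_s)$ is immediately computable. For every $\rho \in \{\pi, \pi_1, \ldots, \pi_m\}$, the exponent $y$ with $\rho = \sigma^y$ is recovered in polynomial time as follows: on each cycle $\gamma_j$, read off the displacement of a fixed base point under $\rho$, which yields $y \bmod \ell_j$; these residues are automatically consistent because $\rho \in \langle \sigma \rangle$, and the generalized Chinese remainder procedure returns the unique $y \bmod N$. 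The resulting binary-encoded exponents form the desired $\mathsf{pX}[\mathsf{FCycG}]$-instance over $\integers_N$.

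For the abelian case, the additional work is to compute an invariant-factor decomposition $G \cong \integers_{n_1} \times \cdots \times \integers_{n_d}$ together with the coordinates of $\pi, \pi_1, \ldots, \pi_m$. I will first compute the orbits of $G$ on $[1,n]$; since an abelian transitive action is automatically regular, each restriction $G|_{O_l}$ has order exactly $|O_l|$, so $|G|$ is known and its prime factorization can be found in polynomial time (every prime divisor of $|G|$ is bounded by $n$, which is given in unary). Next, a generating set for the kernel of the canonical surjection $\integers^m \to G$, $e_i \mapsto \pi_i$, is extracted using the straight-line programs produced by the algorithm of \cite{FurstHL80} for expressing arbitrary elements of $G$ over $\pi_1, \ldots, \pi_m$. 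Applying Smith normal form to the resulting relation matrix then yields both the moduli $n_l$ and the coordinate vectors $\varphi(\pi_i) \in \prod_{l=1}^d \integers_{n_l}$; an analogous straight-line program computation supplies $\varphi(\pi)$.

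The main obstacle is the abelian case: producing the relation matrix and carrying out Smith normal form within polynomial bit-complexity, and verifying that the coordinates computed via straight-line programs can be reduced to their normal forms in the final product without blow-up. The cyclic case is comparatively clean, since Appendix~\ref{appendix-cyclic} already reduces it to a discrete logarithm computation in a cyclic permutation group, which is polynomial-time by the cycle-by-cycle Chinese remainder argument sketched above.
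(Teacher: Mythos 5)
Your high-level plan is exactly the one the paper uses: compute an explicit isomorphism from the permutation-represented (abelian or cyclic) group $G = \langle \pi_1,\ldots,\pi_m\rangle$ onto a product of cyclic groups $\prod_{l}\integers_{n_l}$, push the input tuple through it, and observe that all six side constraints are transported verbatim by a group isomorphism. Your cyclic case matches the paper's Appendix~\ref{appendix-cyclic}: obtain a single generator $\sigma$ via Lemma~\ref{lemmacyclicgroup2}, then recover each exponent by reading cycle displacements and merging via the generalized CRT over the (not necessarily coprime) cycle lengths; this is a clean and valid alternative to the paper's route through the straight-line programs of \cite{FurstHL80} mentioned in its remark after Definition~\ref{def-monoid-classes}.

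The divergence is in the abelian case. Where you sketch a from-scratch algorithm (orbit computation, ``a transitive abelian action is regular'' to get $|G|$, prime factorization of $|G|$, extract a relation matrix for the surjection $\integers^m \twoheadrightarrow G$ from \cite{FurstHL80}, Smith normal form), the paper simply invokes Theorem~\ref{lemma-I88}, attributed to \cite{Iliopoulos88,McKenzieC87}, which says precisely that the invariant-factor decomposition of an abelian permutation group and the images of the generators under the isomorphism are computable in polynomial time. Your sketch is pointed in the right direction --- it is essentially the content of those references --- but the step ``a generating set for the kernel is extracted using the straight-line programs of \cite{FurstHL80}'' is left at a level of hand-waving that does not by itself constitute a polynomial-time algorithm, and you flag this yourself as the ``main obstacle.'' Also, the orbit/regularity/prime-factorization steps you list are not actually used in the subsequent Smith normal form argument, so they are orphaned. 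Replacing all of that with a single citation of \cite{Iliopoulos88,McKenzieC87} makes the proof both shorter and airtight, which is what the paper does; as written, your abelian case is not self-contained.
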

This result is an immediate consequence of the following result from  \cite{Iliopoulos88,McKenzieC87}:

\begin{theorem}[c.f.~\cite{Iliopoulos88,McKenzieC87}] \label{lemma-I88}
From a list $\pi_1, \ldots, \pi_m, \pi \in S_n$ of pairwise commuting permutations one can 
compute in polynomial time the following:
\begin{itemize}
\item unary encoded numbers
$n_1, \ldots, n_d$ such that $\langle \pi_1, \ldots, \pi_m \rangle \cong \prod_{j=1}^d \integers_{n_j}$ and
\item an isomorphism
$h : \langle \pi_1, \ldots, \pi_m \rangle \to \prod_{i=1}^d \integers_{n_i}$ that is represented by $h(\pi_1), \ldots, h(\pi_m)$.
\end{itemize}
\end{theorem}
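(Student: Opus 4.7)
The plan is to follow the classical approach of computing the primary decomposition of $G = \langle \pi_1, \ldots, \pi_m \rangle$ via Schreier--Sims together with a per-prime cyclic decomposition, in the spirit of \cite{Iliopoulos88,McKenzieC87}. The crucial a priori bound is the remark after Definition~\ref{def-monoid-classes}: every prime power $p^e$ dividing the order of any element of $S_n$ satisfies $p^e \leq n$. In the primary decomposition $G \cong \prod_j \integers_{p_j^{e_j}}$, each cyclic factor $\integers_{p_j^{e_j}}$ is realized by an element of $G \leq S_n$ of order $p_j^{e_j}$, hence $p_j^{e_j} \leq n$. This justifies the unary encoding of the $n_j$ in the output.

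First I would run Schreier--Sims on $\pi_1, \ldots, \pi_m$, which in polynomial time yields $|G|$ together with a polynomial-time membership test; in particular, given any word $\pi_1^{k_1} \cdots \pi_m^{k_m}$ with $k_i$ in binary, it decides whether the word evaluates to the identity. From $|G|$ I read off the primes $p \leq n$ dividing $|G|$. For each such prime $p$ I peel off the $p$-part of every generator: if $\ord(\pi_i) = p^{e_i} q_i$ with $\gcd(p, q_i) = 1$, then $\pi_i^{q_i}$ has $p$-power order, and together these elements generate the Sylow $p$-subgroup $G_p$. Orders of permutations are LCMs of cycle lengths and thus computable in polynomial time, so this reduction is efficient.

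The core step is to decompose each $p$-group $G_p$ as a product of cyclic groups of $p$-power order bounded by $n$. I would proceed greedily: pick a generator $\alpha$ of maximum order $p^e$, so that $\langle \alpha \rangle$ is a direct summand of $G_p$; for every other $p$-power generator $\beta$, find by a discrete-logarithm search in $\langle \alpha \rangle$ (which has size at most $n$, so brute enumeration suffices) the unique exponent $t$ with $\beta^{p^f} = \alpha^t$, where $f$ is smallest with $\beta^{p^f} \in \langle \alpha \rangle$; then replace $\beta$ by an element of a fixed complement of $\langle \alpha \rangle$ and recurse in the quotient. Correctness of the splitting step is the main technical obstacle: one has to show that the modified generators span a complement to $\langle \alpha \rangle$ and that coordinates are uniquely determined, which follows from the standard basis-extension argument underlying the fundamental theorem of finite abelian groups. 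Finally, gluing the per-prime decompositions via the Chinese remainder theorem (Theorem~\ref{CRT}) produces the isomorphism $h : G \to \prod_j \integers_{n_j}$, with each $h(\pi_i)$ recorded by tracking the coordinate vector of $\pi_i$ through every decomposition step. Since every prime power that appears is at most $n$, all intermediate numbers remain polynomially bounded, yielding the claimed polynomial running time.
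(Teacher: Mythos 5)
The paper itself does not prove Theorem~\ref{lemma-I88}; it is cited from \cite{Iliopoulos88,McKenzieC87} (with a self-contained argument given only for the cyclic case, in Appendix~\ref{appendix-cyclic}), so you are supplying an argument the paper omits. Most of your sketch is sound --- membership and order via Schreier--Sims or \cite{FurstHL80}, the primary decomposition, the bound $p^e\le n$ that justifies unary output, and the final CRT gluing --- but the central step of splitting a cyclic direct summand off a Sylow $p$-subgroup $G_p$ is carried out in the wrong order and, as written, is false. You pick $\alpha$ of maximal $p$-power order, and for every other generator $\beta$ you compute the least $f$ with $\beta^{p^f}\in\langle\alpha\rangle$, write $\beta^{p^f}=\alpha^t$, and replace $\beta$ by $\beta':=\beta\alpha^{-t/p^{f}}$; you then assert that ``the modified generators span a complement to $\langle\alpha\rangle$.'' That is not true. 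Take $G_p=\integers_4\times\integers_2$ (written additively) with $\alpha=(1,0)$ and generators $\beta_1=(0,1)$, $\beta_2=(2,1)$: for each $\beta_i$ one gets $f=1$ and $2\beta_i=(0,0)=0\cdot\alpha$, hence $t=0$ and $\beta_i'=\beta_i$; yet $\beta_1'+\beta_2'=(2,0)=2\alpha\ne 0$, so $\langle\beta_1',\beta_2'\rangle\cap\langle\alpha\rangle\ne 0$ and the modified generators are not a complement. The issue is that the per-generator cleanups are not mutually coherent: each $\beta_i'$ separately satisfies $\langle\beta_i'\rangle\cap\langle\alpha\rangle=0$, but they need not lie in a single common complement.

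The fix is to reverse the order of operations: first recurse to obtain a cyclic decomposition $\bar\gamma_1,\dots,\bar\gamma_r$ of the quotient $G_p/\langle\alpha\rangle$, and only then lift each $\bar\gamma_i$ to some $\gamma_i\in G_p$ and apply your adjustment $\gamma_i\mapsto\gamma_i\alpha^{-t_i/p^{f_i}}$. The divisibility argument you invoke (that $p^{f_i}\mid t_i$, a consequence of the maximality of $\ord(\alpha)$) is exactly what makes each lift have the same order as its image in the quotient, and this is what forces $G_p=\langle\alpha\rangle\times\langle\gamma_1\rangle\times\cdots\times\langle\gamma_r\rangle$. You would also need to say how the quotient $G_p/\langle\alpha\rangle$ is represented algorithmically, since it is no longer given to you as a permutation group --- this is precisely the bookkeeping handled in the cited papers, and one common way to sidestep it is to replace the whole recursion by a Smith-normal-form reduction of a relation matrix for the abelian group. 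With that reordering (or the matrix approach) your argument goes through and yields the statement the paper cites.
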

For the proof of Theorem~\ref{maintheorem} we will only need the case of Theorem~\ref{lemma-I88} for cyclic permutation groups.
For this case, we provide a self-contained proof in Appendix~\ref{appendix-cyclic}.

\begin{lemma} \label{fin-cyc-Zn}
$\fcgsss \leq^\fpt \Znsss$
\end{lemma}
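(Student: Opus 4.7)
The plan is to construct an fpt-reduction that outputs instances in the fixed dimension $d=2$, so the target $\Znsss$-instance lives in $\integers^2 \in \integers^*$. Given an $\fcgsss$-input $(a, a_1, \ldots, a_m, k)$ over $\integers_n$ (with $n$ in binary) and $a, a_1, \ldots, a_m$ represented by their canonical values in $[0, n-1]$, the first step is a range observation: for any $x_1, \ldots, x_m \in \{0,1\}$ with $\sum_{i=1}^m x_i = k$, the integer $\sum_{i=1}^m x_i a_i$ lies in $[0, k(n-1)]$, so the congruence $\sum_{i=1}^m x_i a_i \equiv a \pmod{n}$ is equivalent to $\sum_{i=1}^m x_i a_i = a + t n$ for some integer $t \in \{0, 1, \ldots, k-1\}$. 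Thus only $k$ quotients need to be accounted for, which is exactly what keeps the construction within the fpt budget.

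Based on this, I would output the $\Znsss$-instance over $\integers^2$ with target vector $(a, 1)$, new parameter $k+1$, and the $m+k$ elements $b_i = (a_i, 0)$ for $i \in [1,m]$ together with helpers $c_t = (-tn, 1)$ for $t \in [0, k-1]$. The second coordinate is a selector: any $\{0,1\}$-combination that uses exactly $k+1$ elements and sums to $(a,1)$ must pick exactly one helper $c_t$ (the only elements with nonzero second coordinate), and hence exactly $k$ of the $b_i$'s. The first coordinate of the equation then reads $\sum_i x_i a_i - tn = a$, i.e., $\sum_i x_i a_i \equiv a \pmod{n}$. Conversely, from a solution of the original instance the range argument pins down a unique $t \in \{0,\ldots,k-1\}$, and setting $y_t = 1$ produces a solution of the constructed instance.

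The construction runs in polynomial time (each helper's first coordinate has bit-length at most $\#(k) + \#(n)$, and there are only $m+k$ elements), the dimension $2$ is a constant given in unary, and the new parameter $k+1$ depends only on $k$; hence the map satisfies the definition of an fpt-reduction. The only subtlety I expect is getting the range of $t$ right: it is what keeps the number of helpers linear in $k$, and it is where one uses both the cardinality constraint $\sum_i x_i = k$ and the fact that the $a_i$ are given as canonical representatives in $[0, n-1]$. No deeper obstacle arises, since the construction is the standard device of guessing the modular quotient, tailored to the subset-sum format through the second selector coordinate.
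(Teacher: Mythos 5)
Your proposal is correct and uses essentially the same idea as the paper: remove the mod-$n$ condition by observing that the quotient is bounded by $k$, then build a $\integers^2$-instance with one auxiliary coordinate that acts as a counter/selector. The paper realizes this with $k$ identical helpers $b=(0,-n)$, target $(k,a)$, $a'_i=(1,a_i)$, and a final zero-padding step to hit exact cardinality, whereas you use $k$ distinct helpers $c_t=(-tn,1)$, target $(a,1)$, and parameter $k+1$, which avoids the padding; both are valid fpt-reductions and the difference is cosmetic.
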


\begin{proof}
Let  $\integers_{n}$ be the input group with $n$ given in binary representation and let 
$a, a_1, \ldots, a_m \in [0,n-1]$ be the input elements.

We have to check whether there exists a subset $I \in [1,m]$ such that $|I|=k$ and
\begin{equation} \label{eq-abelian1}
 \sum_{i \in I} a_{i} \equiv a \mod n .
\end{equation}
For every $I \in [1,m]$ of size $k$, \eqref{eq-abelian1} is equivalent to 
\begin{equation} \label{eq-abelian2}
\exists q \in [0,k] : \sum_{i \in I} a_i - q \cdot n = a.
\end{equation}
We define $a' = (k,a)$, $a'_i = (1,a_i)$ and $b = (0,-n)$. These are elements of $\integers^{2}$.
Let us consider the sequence $(a'_1, a'_2 \ldots, a'_m, b,\ldots, b)$
of length $m + k$, where $b$ appears exactly $k$ times. 
Write this sequence as $(c_1, c_2, \ldots, c_{m+k})$.
Then there is $I \in [1,m]$ such that $|I|=k$ and \eqref{eq-abelian2} holds, if and only if 
there is a subset $J \subseteq [1,m+k]$ such that $|J| \leq 2k$ and
\begin{equation} \label{eq-abelian3}
\sum_{j \in J} c_j = a' .
\end{equation}
Finally, by adding the zero vector to the sequence $(c_1, c_2, \ldots, c_{m+k})$, we can replace the condition $|J| \leq 2k$ by the condition $|J| = 2k$.
\end{proof}

\begin{lemma} \label{N-CycPG}
$\Nsss \lfpt \cpgsss$
\end{lemma}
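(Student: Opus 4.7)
The plan is to simulate integer subset sum by subset sum in a cyclic permutation group whose order $N$ is chosen large enough to prevent wrap-around yet whose order can be realized by a permutation on a polynomially-sized point set. Given an $\Nsss$ instance $(a,a_1,\ldots,a_m,k)$, let $M = \max\{a,\, k\cdot\max_i a_i\}+1$. I would first compute the smallest distinct primes $p_1 < p_2 < \cdots < p_r$ whose product $N := \prod_{i=1}^r p_i$ satisfies $N \geq M$. Since $\log_2 M$ is polynomial in the input length, $r = O(\log M)$ suffices, and by the prime number theorem $p_r = O(r\log r)$ is polynomial in the input; hence the primes can be found by trial division in polynomial time.

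Next I would build $\pi \in S_n$, with $n := \sum_{i=1}^r p_i$, as a product of $r$ disjoint cycles of respective lengths $p_1,\ldots,p_r$ (on disjoint point sets). Because the $p_i$ are pairwise coprime, $\ord(\pi) = \prod_{i} p_i = N$, so $\langle\pi\rangle$ is a cyclic permutation subgroup of $S_n$ isomorphic to $\integers_N$, and $n$ is polynomial in the input length. The target $\cpgsss$ instance then consists of the generator $\pi$ (presenting the cyclic group), the element list $\pi^{a_1},\pi^{a_2},\ldots,\pi^{a_m}$, the target element $\pi^a$, and the same parameter $k$. Each power $\pi^{a_j}$ (and $\pi^a$) is computable in polynomial time by iterated squaring inside $S_n$, and the output size is polynomial.

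For correctness, a subset $I \subseteq [1,m]$ of size $k$ satisfies $\sum_{i\in I} a_i = a$ in $\naturals$ iff $\prod_{i\in I} \pi^{a_i} = \pi^a$. The forward implication is immediate. For the converse, $\prod_{i\in I}\pi^{a_i}=\pi^a$ is equivalent to $\sum_{i\in I} a_i \equiv a \pmod N$; since $0 \leq a < N$ and $0 \leq \sum_{i\in I} a_i \leq k\cdot\max_j a_j < N$, the congruence forces actual equality in $\naturals$. The parameter is preserved, so the transformation is an fpt-reduction.

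The main obstacle is reconciling two competing demands on the modulus $N$: it must be large enough (at least $M$, which can be exponential in the input) to avoid spurious wrap-around, yet the cyclic group $\integers_N$ must be realized as $\langle\pi\rangle$ for some $\pi$ on a point set of polynomial size $n$. This is resolved by exploiting that $\ord(\pi)$ is the \emph{lcm} of its cycle lengths: splitting $N$ into many \emph{small distinct} prime factors (rather than using one huge prime) keeps each cycle, and hence $n$, polynomial while still letting $N$ grow exponentially, which is precisely what the prime number theorem allows.
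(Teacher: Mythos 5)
Your proposal is correct and follows essentially the same route as the paper: choose the first few primes $p_1 < \cdots < p_r$ so that their product $N$ exceeds the largest relevant integer, realize $\integers_N$ as a subgroup of $S_n$ with $n = \sum_i p_i$ via disjoint $p_i$-cycles (using the Chinese remainder theorem), and map each $a_i$ to the corresponding power of the cycle product. The only cosmetic difference is that you fold the target $a$ into the bound $M$ (and compute $\pi^{a_i}$ by iterated squaring), whereas the paper discards the case $a > ke$ separately and writes down the residue vectors $(a_i \bmod p_j)_j$ directly; these are the same computation.
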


\begin{proof}
The result is an easy consequence of the Chinese remainder theorem (Theorem~\ref{CRT}). To see this, 
let $a, a_1,\dots,a_m \in \naturals$ and $k \in \naturals$ be the input data for $\Nsss$.
Let $e = \max\{a_1,\dots,a_m\}$  and $d$ be the smallest number such that 
\begin{displaymath}
\prod_{i=1}^d p_i > ke,
\end{displaymath}
where $p_1 < \dots < p_d$ are the first $d$ primes. Let $N = \prod_{i \in [1, d]} p_i$ so that $ke < N$.
We have $d \in \mathcal{O}(\log k + \log e)$ and $p_d \in \mathcal{O}(d \log d)$.
The list $p_1, \ldots, p_d$ with every $p_i$ in \emph{unary} encoding can be computed in time $\poly(\log k, \log e)$.

Note that our $\Nsss$ instance has no solution if $a > ke$. In this case the reduction returns some negative $\cpgsss$-instance.
Hence we assume that $a \leq ke < N$ in the following. 

The group we are working with is $G = \prod_{i=1}^{d} \integers_{p_i}$.
Since the primes are chosen pairwise different, Theorem~\ref{CRT} implies that the group $G$ is
isomorphic to the cyclic group $\integers_N$.
The group $G$ can be embedded into  $S_n$ for $n=\sum_{i \in [1, d]} p_i$ by identifying the direct factors $\integers_{p_i}$ of $G$ with disjoint cycles of length $p_i$
($i \in [1,d]$).  For $i \in [1,m]$ and $j \in [1,d]$ let
\[
z_{i,j} = a_i \bmod p_j \in [0, p_j-1] \ \text{ and } \ z_j = a \bmod p_j \in [0, p_j-1].
\]
Then, for every subset $I \subseteq [1,m]$ of size $k$ we have $\sum_{i \in I} a_i = a$ if and only if
$\sum_{i \in I} a_i \equiv a \mod N$ if and only if
$\sum_{i \in I} z_{i,j} \equiv z_j \mod p_j$ for all $j \in [1,d]$. Here we use the Chinese remainder theorem 
for the second equivalence.
\end{proof}

\begin{corollary}  \label{coro-comm-subset}
All problems $\mathsf{pX}[\mathsf{C}]$ for 
\begin{itemize}
\item $\mathsf{X} \in \{\mathsf{SSS},  \mathsf{SSS}_\leq, \mathsf{SSS}^{\neq}, \mathsf{SSS}^{\neq}_\leq \}$ and
\item $\mathsf{C} \in \{\mathsf{CycPG}, \mathsf{FCycG},  \naturals, \integers, \naturals^*, \integers^*\}$
\end{itemize}
 are equivalent with respect to fpt-reductions.
\end{corollary}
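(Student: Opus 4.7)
The idea is to first collapse all six monoid classes to a single equivalence class (for the variant $\mathsf{SSS}$), and then appeal to Corollary~\ref{coro-subset} to handle the four variants of $\mathsf{X}$ uniformly within each monoid class. Concretely, by Corollary~\ref{coro-subset}, for every fixed $\mathsf{C} \in \{\mathsf{CycPG}, \mathsf{FCycG}, \naturals, \integers, \naturals^*, \integers^*\}$ the four problems $\Csss$, $\Csssleq$, $\Csssr$, $\Csssrleq$ are already mutually fpt-equivalent. So it suffices to prove that the six problems $\cpgsss$, $\fcgsss$, $\Nsss$, $\Zsss$, $\Nnsss$, $\Znsss$ are pairwise fpt-equivalent.

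\textbf{Main chain.} I would arrange the reductions into a single cycle, using only results already proved:
\[
\cpgsss \;\lfpt\; \fcgsss \;\lfpt\; \Znsss \;\lfpt\; \Nnsss \;\lfpt\; \Nsss \;\lfpt\; \cpgsss.
\]
The first reduction is Lemma~\ref{lemma-I-MCK} (an abelian permutation group that happens to be cyclic is recognised and converted to a finite cyclic group in polynomial time). The second is Lemma~\ref{fin-cyc-Zn}. The third is the $\integers^* \lfpt \naturals^*$ case of Lemma~\ref{lemma-Z-to-N} (adding a large enough vector to every input element pushes everything into $\naturals^*$). The fourth is Lemma~\ref{thm-Zn-to-Z}. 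The fifth is Lemma~\ref{N-CycPG}. Composing these reductions gives fpt-equivalence of the five classes $\mathsf{CycPG}$, $\mathsf{FCycG}$, $\naturals^*$, $\naturals$, $\integers^*$ (with $\integers^*$ occurring as the starting point of the third reduction, and equivalence following by chasing the cycle).

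\textbf{Including $\integers$.} To bring $\Zsss$ into this equivalence class, I use that $\naturals \subseteq \integers$ (so the identity is a trivial reduction $\Nsss \lfpt \Zsss$), and the $\integers \lfpt \naturals$ case of Lemma~\ref{lemma-Z-to-N} for the reverse direction. This gives $\Zsss \equiv^{\fpt} \Nsss$, hence $\Zsss$ is fpt-equivalent to all of the above. A similar trivial inclusion together with Lemma~\ref{lemma-Z-to-N} gives $\Znsss \equiv^{\fpt} \Nnsss$, which is already covered by the cycle.

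\textbf{Wrapping up.} At this point, the six problems $\mathsf{pSSS}[\mathsf{C}]$ for $\mathsf{C} \in \{\mathsf{CycPG}, \mathsf{FCycG}, \naturals, \integers, \naturals^*, \integers^*\}$ are pairwise fpt-equivalent. Since Corollary~\ref{coro-subset} provides, for each such $\mathsf{C}$, fpt-equivalences among $\Csss, \Csssleq, \Csssr, \Csssrleq$, transitivity of $\equiv^{\fpt}$ yields the full $6 \times 4$ block of fpt-equivalences claimed in the corollary. There is no real obstacle: the work is bookkeeping and the only non-trivial pieces (the identification of a cyclic permutation group with a finite cyclic group, and the Chinese Remainder embedding of $\Nsss$ into $\cpgsss$) are already carried out in Lemmas~\ref{lemma-I-MCK}, \ref{fin-cyc-Zn}, and \ref{N-CycPG}.
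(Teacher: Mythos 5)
Your proposal is correct and follows exactly the same route as the paper: the main chain
\[
\cpgsss \lfpt \fcgsss \lfpt \Znsss \lfpt \Nnsss \lfpt \Nsss \lfpt \cpgsss
\]
is precisely the cycle in the paper's proof (using Lemmas~\ref{lemma-I-MCK}, \ref{fin-cyc-Zn}, \ref{lemma-Z-to-N}, \ref{thm-Zn-to-Z}, \ref{N-CycPG}), followed by the same appeal to Corollary~\ref{coro-subset} to cover the four variants of $\mathsf{X}$. You are slightly more careful than the paper in explicitly noting how $\Zsss$ joins the equivalence class via the trivial inclusion $\Nsss \lfpt \Zsss$ and the $\integers\lfpt\naturals$ direction of Lemma~\ref{lemma-Z-to-N}; the paper leaves this step implicit.
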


\begin{proof}
We have
\begin{alignat}{2}
\cpgsss \quad & \leq^\fpt  \quad \fcgsss  & & \qquad \text{(Lemma~\ref{lemma-I-MCK})} \nonumber \\
& \leq^\fpt  \quad \Znsss  & & \qquad \text{(Lemma~\ref{fin-cyc-Zn})} \nonumber \\
& \leq^\fpt  \quad \Nnsss & & \qquad \text{(Lemma~\ref{lemma-Z-to-N})} \nonumber \\
& \leq^\fpt  \quad \Nsss  & & \qquad \text{(Lemma~\ref{thm-Zn-to-Z})} \nonumber \\
& \leq^\fpt  \quad \cpgsss . & & \qquad \text{(Lemma~\ref{N-CycPG})} \nonumber
\end{alignat}
Finally from Corollary~\ref{coro-subset} we obtain for every
$\mathsf{C} \in \{\mathsf{CycPG}, \mathsf{FCycG},  \naturals, \integers, \naturals^*, \integers^*\}$
the equivalence of the problems $\Csss, \Csssr, \Csssleq, \Csssrleq$ with respect to fpt-reductions.
\end{proof}

The missing piece in the proof of Theorem~\ref{maintheorem} is the following:

\begin{theorem} \label{thm-last-piece}
If $\mathsf{C}$ is one of the commutative classes from Definition~\ref{def-monoid-classes}
then \[ \Csss \leq^\fpt \Cksleq \leq^\fpt  \Cks \lfpt \Csss. \]
\end{theorem}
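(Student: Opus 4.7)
The two reductions $\Cksleq\lfpt\Cks$ (Lemma~\ref{lemma-simple1}) and $\Cks\lfpt\Csss$ (Lemma~\ref{lemma-simple}) are immediate, so the only substantive task is $\Csss\lfpt\Cksleq$. My plan is to carry this out first for $\mathsf{C}=\mathsf{FCycG}$ via a Chinese Remainder gadget whose new parameter is $K=k+\binom{k}{2}$, and then extend to every other commutative class using Corollary~\ref{coro-comm-subset} together with knapsack analogues of Lemmas~\ref{lemma-Z-to-N}, \ref{thm-Zn-to-Z}, \ref{fin-cyc-Zn}, and \ref{N-CycPG} (whose proofs go through verbatim for the $\mathsf{pKS}_\leq$-variants). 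For $\mathsf{FAbG}$, $\mathsf{CycPG}$, and $\mathsf{AbPG}$ the auxiliary cyclic factors below can also be realized inside the class itself (for permutation groups by appending fresh disjoint cycles of the appropriate prime lengths on new points, which keeps the unary degree polynomial).

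For the cyclic case, given a $\fcgsss$-instance $(a,a_1,\ldots,a_m,k)$ in $\integers_n$, I would pick $k$ pairwise distinct ``slot primes'' $p_j>K$ and $\binom{k}{2}$ pairwise distinct ``pair primes'' $q_{j_1,j_2}>2m$, all coprime to $n$, and work in $\integers_N$ for $N=n\prod_j p_j\prod_{j_1<j_2}q_{j_1,j_2}$, using the CRT isomorphism of Theorem~\ref{CRT}. For each $(i,j)\in[1,m]\times[1,k]$ a \emph{slot-assignment} element $b_{i,j}\in\integers_N$ is defined via CRT to equal $a_i$ mod $n$, to be $1$ in the $p_j$-factor and $0$ in the other $p_{j'}$-factors, and in each $q_{j_1,j_2}$-factor to equal $i$ if $j=j_1$, $-i$ if $j=j_2$, and $0$ otherwise. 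For each pair $(j_1,j_2)$ and each $r\in\{1,\ldots,q_{j_1,j_2}-1\}\setminus\{T\}$ with $T=m$ (which lies in the safe range $[m,q_{j_1,j_2}-m]$ because $q_{j_1,j_2}>2m$), an \emph{adjuster} $d_{(j_1,j_2),r}$ is introduced, supported only in the $q_{j_1,j_2}$-factor with value $r$. The target $B$ equals $a$ mod $n$, $1$ in every $p_j$-factor, and $T$ in every $q_{j_1,j_2}$-factor.

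For correctness, projecting onto $\integers_{p_j}$ gives $\sum_i y_{i,j}\equiv 1\pmod{p_j}$; since the total budget is at most $K<p_j$, this forces exactly one $y_{i(j),j}=1$ per slot and uses up $k$ of the $K$ budget units. Projecting onto $\integers_{q_{j_1,j_2}}$ yields $(i(j_1)-i(j_2))+\sum_r r\,z_{(j_1,j_2),r}\equiv T$: a \emph{good} pair ($i(j_1)\neq i(j_2)$) is satisfied by the single adjuster $r=T-(i(j_1)-i(j_2))$, which lands in $\{1,\ldots,q_{j_1,j_2}-1\}\setminus\{T\}$ thanks to the safe range, whereas a \emph{bad} pair ($i(j_1)=i(j_2)$) would need $r=T$, which is excluded, and therefore requires at least two adjusters (for instance $r=1$ and $r=T-1$, both valid once $m\ge 2$). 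Summing over pairs, the total adjuster demand is at least $\binom{k}{2}+\#\text{bad pairs}$, while the budget left for adjusters is only $K-k=\binom{k}{2}$. Hence there are no bad pairs, the indices $i(1),\ldots,i(k)$ are pairwise distinct, and projecting onto $\integers_n$ yields a size-$k$ subset of $\{a_1,\ldots,a_m\}$ summing to $a$, as required.

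The main obstacle is exactly the calibration of the adjuster gadget: choosing $T=m$ in the ``safe range'' $[m,q_{j_1,j_2}-m]$ is what simultaneously guarantees that every good pair has an available single-adjuster value while the one excluded value $T$ is precisely what a bad pair would need. This strict asymmetry (one adjuster for good, at least two for bad) is what powers the tight budget argument and is what keeps the new parameter $K=O(k^2)$ free of any dependence on $m$; transferring the construction to the remaining commutative classes then only requires the aforementioned routine analogues of the already-proved reductions between classes.
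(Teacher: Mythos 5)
Your proposal is correct, but it routes the substantive reduction $\Csss\lfpt\Cksleq$ through a genuinely different gadget than the paper. The paper's proof (for $\mathsf{C}=\integers^*$ and its adaptations) adjoins one ``counting'' coordinate plus $m$ unit‐vector coordinates, and supplies \emph{filling} elements $(0,0^i,1^j,0^l,0^n)$ that close the gaps between chosen slots; distinctness of the $k$ chosen indices is read off directly from the middle block $1^m$ of the target, and the new parameter is only $2k+1$. Your gadget instead introduces $k$ \emph{slot primes} and $\binom{k}{2}$ \emph{pair primes} and exploits a tight budget of $K=k+\binom{k}{2}$: each of the $\binom{k}{2}$ pair coordinates eats up at least one adjuster, and equality is only possible when no pair is ``bad'' (i.e.\ when the chosen slot indices are pairwise distinct). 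The pairwise‐inequality test via $\pm i$ signs in each $\integers_{q_{j_1,j_2}}$ and the safe choice $T=m\in[m,q-m]\setminus\{0\}$ are the crux, and the budget accounting (exactly one adjuster per pair, none to spare) is what pins down distinctness. This buys you a construction that lands \emph{directly} inside a single cyclic group $\integers_N$, at the cost of a parameter blow‐up from $O(k)$ to $O(k^2)$ and a longer list of auxiliary adjuster elements (still polynomial); the paper's gadget is more parameter‐economical but naturally lives in a product group and needs the $\cpgsss\lfpt\cpgksleq\lfpt\fcgksleq$ detour to reach $\mathsf{FCycG}$. Your claim that the $\mathsf{pKS}$‐analogues of Lemmas~\ref{lemma-Z-to-N}, \ref{thm-Zn-to-Z} and~\ref{fin-cyc-Zn} ``go through verbatim'' is mildly overstated — Lemma~\ref{lemma-Z-to-N} only holds for the exact‐sum variants $\mathsf{pKS}$/$\mathsf{pSSS}$, so one must first pass through $\Cks$ via Lemma~\ref{lemma-simple1} and back via Lemma~\ref{lemma-simple2} exactly as the paper does in its chain for $\Znksleq\lfpt\Zksleq$ — but this is a routine repair, and the overall argument is sound.
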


\begin{proof}
By Lemmas~\ref{lemma-simple} and \ref{lemma-simple1} the statement $\Cksleq \leq^\fpt  \Cks \lfpt \Csss$ holds for every class $\mathsf{C}$.
It remains to show $\Csss \leq^\fpt \Cksleq$.
The idea is similar to the proof of Lemma~\ref{thm-rep-free}.
It suffices to show $\Csssr \leq^\fpt \Cksleq$.
We first prove the theorem for $\mathsf{C} = \integers^*$. 

Let $A \subseteq \integers^n$ be finite, $a \in \integers^n$ and $k$ be the parameter.
Let $A = \{ a_1, \ldots, a_m\}$. We then move to the group $\integers^{m+n+1}$. For $i \in [1,m]$ let
$e_i \in \integers^m$ be the $i^{\text{th}}$ unit vector, i.e., the vector with $m$ entries, where the $i^{\text{th}}$ entry
is $1$ and all other entries are zero.
We define $b_i = (1,e_i,a_i) \in \integers^{m+n+1}$ and $b = (k,1^m,a) \in \integers^{m+n+1}$.
We then add all filling vectors $(0, 0^i, 1^j, 0^l, 0^n)$, where $i+j+l = m$. Consider the set
$B = \{ b_1, \ldots, b_m\} \cup F$, where $F$ is the set of all filling vectors.

We claim that there is a subset $S \subseteq A$ of size $k$ with $\sum_{x \in S} x = a$ if and only if
$b \in B^{\leq 2k+1}$.\footnote{Note that when writing $b \in B^{\leq 2k+1}$ we use the multiplicative 
notation for the abelian group $\integers^{m+n+1}$.}
 First assume that $\sum_{x \in S} x = a$ with $|S|=k$. Let $S = \{ a_{i_1}, \ldots, a_{i_k} \}$
with $i_1 < i_2 < \cdots < i_k$. Then $\sum_{j=1}^k b_{i_j} = (k, v, a)$, where $v \in \{0,1\}^m$ has exactly $k$ ones.
Hence, by adding at most $k+1$ filling vectors to $\sum_{j=1}^k b_{i_j}$, we obtain $b = (k,1^m,a)$.

For the other direction, assume that $b \in B^{\leq 2k+1}$ and let $b = \sum_{i=1}^l c_i$ with $c_i \in B$ and $l \leq 2k+1$.
Inspecting the first coordinate shows that the sum $\sum_{i=1}^l c_i$ contains exactly $k$ vectors from 
$\{ b_1, \ldots, b_m\}$. Moreover, since the entries at positions $2, \ldots, m+1$ in $b$ are $1$, these $k$ vectors
must be pairwise different. By projecting onto the last coordinate we obtain a subset $S \subseteq A$ of size $k$ with $\sum_{x \in S} x = a$.

The above idea also works for $\mathsf{C} \in \{\mathsf{CycPG}, \mathsf{AbPG}, \mathsf{FAbG}, \integers \}$.
For $\mathsf{C} = \integers$ we can use the above reduction together with $\Znksleq \leq^\fpt \Zksleq$, which is obtained as follows:
\begin{alignat*}{2}
\Znksleq \quad & \leq^\fpt  \quad \Znks  & & \qquad \text{(Lemma~\ref{lemma-simple1})} \\
& \leq^\fpt  \quad \Nnks  & & \qquad \text{(Lemma~\ref{lemma-Z-to-N})} \\
& \leq^\fpt  \quad \Nks & & \qquad \text{(Lemma~\ref{thm-Zn-to-Z})} \\
& \leq^\fpt  \quad \Nksleq . & & \qquad \text{(Lemma~\ref{lemma-simple2})} 
\end{alignat*}
Finally, for $\mathsf{C} = \mathsf{FCycG}$ we obtain 
\[ \fcgsss \equiv^{\fpt} \cpgsss \lfpt \cpgksleq \lfpt \fcgksleq\] with 
Corollary~\ref{coro-comm-subset} and Lemma~\ref{lemma-I-MCK}.
\end{proof}

\subsubsection{Abelian groups}

Clearly, the problems $\apgks$ and $\apgsss$  are $\mathsf{W}[1]$-hard; this follows from the corresponding results for $\cpgfactor$ and $\cpgsss$.
The best upper bound that we can show is $\mathsf{W}[5]$:

\begin{theorem} \label{thm-W5}
The problems $\afgsss$ and $\afgks$ belong to $\mathsf{W}[5]$.
\end{theorem}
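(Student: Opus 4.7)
The plan is to give an fpt-reduction from $\afgsss$ (and, by the same construction, $\afgks$) to $\pMC(\Sigma_{5,u})$ for some fixed constant $u$, which by \cite[Theorem~7.22]{flumgrohe} suffices to show membership in $\mathsf{W}[5] = [\pMC(\Sigma_{5,u})]^{\fpt}$. Given an instance $(G, a, a_1, \ldots, a_m, k)$ with $G = \prod_{j=1}^{d} \integers_{n_j}$ (all $n_j$, $a^{(j)}$, $a_i^{(j)}$ in binary), the guiding observation is the coordinate-wise lifting to $\integers$: a multiset $\{y_1, \ldots, y_k\} \subseteq [1,m]$ (a subset in the $\afgsss$ case) satisfies $\sum_{t=1}^k a_{y_t} = a$ in $G$ iff for every coordinate $j \in [1,d]$ there is some $q \in [0,k]$ with $\sum_{t=1}^k a_{y_t}^{(j)} = a^{(j)} + q\,n_j$ in $\integers$.

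Guided by this lifting, the target formula has the $\Sigma_{5,u}$-shape
\[
\exists y_1 \cdots y_k \; \forall j \; \exists q \; \forall b \; \exists \bar c \; \psi(y_1, \ldots, y_k, j, q, b, \bar c),
\]
where the $y_t$ are the chosen indices (constrained in $\psi$ to be pairwise distinct for $\afgsss$, unrestricted for $\afgks$), $j$ ranges over the coordinates $[1,d]$, $q \in [0,k]$ is the modular correction, $b$ ranges over bit positions of the single integer identity $\sum_t a_{y_t}^{(j)} = a^{(j)} + q\,n_j$, and $\bar c$ is a constant-length tuple of variables encoding the incoming and outgoing carries at bit $b$ in a school-addition verification of that identity. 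Only the outermost block depends on $k$; the remaining four blocks contain at most $u$ variables for a fixed constant $u$. The underlying relational structure has polynomial universe $[0, N]$ containing indices $[1,m]$, coordinates $[1,d]$, bit positions, and carry values in $[0,k]$, together with bit-access relations $\mathrm{BitA}(i,j,b)$, $\mathrm{Bita}(j,b)$, $\mathrm{BitN}(j,b)$ for the binary expansions of $a_i^{(j)}$, $a^{(j)}$, $n_j$, the usual order, and a handful of small arithmetic predicates (such as a ternary addition relation on $[0,N]$) sufficient to let the quantifier-free matrix evaluate the $k$-fold bit count $\sigma_b = \sum_{t=1}^k \mathrm{BitA}(y_t, j, b) \in [0,k]$ by an explicit $O(k)$-sized term.

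The hardest step is the inner $\Pi_2$-verification of the integer identity: one has to ensure, using only $O(1)$ variables per $\forall b$-block, that the carry chain is globally consistent across all bit positions. I would follow the device from the $\mathsf{W}[3]$-membership proof of $\Nsssr$ in \cite{bussislam}: at each bit $b$ existentially guess in $\bar c$ both the incoming and outgoing carries in $[0,k]$ and impose the local school-addition equation tying them to $\sigma_b$ and the $b$-th bit of $a^{(j)} + q\,n_j$. The unstored quantity $\mathrm{bit}_b(a^{(j)} + q\,n_j)$ is itself unfolded as the $b$-th bit of the sum of $a^{(j)}$ with $q \leq k$ copies of $n_j$, again through an $O(k)$-sized quantifier-free subterm using the same relations. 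The universal $\forall b$ then enforces the local bit-equations simultaneously at every position, which is equivalent to global correctness of the carry chain (boundary carries fixed to $0$). Once this Buss--Islam-style inner verification is plugged in, the resulting formula has size $O(k)$, the five-alternation shape $\exists\forall\exists\forall\exists$, and is computable in fpt-time, establishing $\afgsss, \afgks \in \mathsf{W}[5]$.
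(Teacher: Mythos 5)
Your outer skeleton (lift coordinate\-wise to $\mathbb{Z}$, guess $k$ indices, universally range over coordinates $j$, existentially guess the modular correction $q$, then verify a single integer identity position by position) matches the paper's, and the five\-alternation target $\exists\forall\exists\forall\exists$ is right. The gap is in the last two quantifier blocks. Guessing the incoming and outgoing carry at a universally chosen position $b$ and checking a local school\-addition equation is \emph{not} equivalent to global correctness of the carry chain: the existential witnesses for different universal instances of $b$ are independent, and nothing ties the outgoing carry guessed at $b$ to the incoming carry guessed at $b+1$. Concretely, summing $1+1$ against the claimed result $0$ over bit positions $\{0,1\}$: at $b=0$ guess $c_{\mathrm{in}}=0,c_{\mathrm{out}}=1$ (then $2+0=0+2\cdot 1$ holds), at $b=1$ guess $c_{\mathrm{in}}=0,c_{\mathrm{out}}=0$ (then $0+0=0+2\cdot 0$ holds); every local equation and both boundary conditions are satisfied, yet $1+1\neq 0$. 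So your formula accepts incorrect sums, and the pattern $\forall b\,\exists(\text{carries})$ cannot replace the chain.

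You also ascribe this device to Buss and Islam, but it is not what their $\mathsf{W}[3]$ proof does, nor what the paper does. They first pass to base $r\approx k$, writing the $k$\-fold digit sum at each position as a low digit $s_p$ and a high digit $c_{p+1}$, so that the whole $k$\-fold sum equals $v+w$ for two numbers $v=(s_0,\dots)_r$ and $w=(c_0,\dots)_r$ with the crucial property that the carry arriving at any position of $v+w$ is $0$ or $1$. Then ``a carry arrives at $p$'' is characterised by the existence of a carry\-generating position below $p$ after which every position propagates; the last two quantifier levels are of the form $\forall p'\exists q$ and encode either the absence of such a generator chain ($\delta=0$: every generator $p'<p$ is killed at some $q\in(p',p)$) or its presence ($\delta=1$: past every non\-propagating $p'<p$ there is a new generator $q\in(p',p)$). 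The two inner universal variables $p,p'$ form a single block of width~2, giving $\Sigma_{5,2}$. To fix your proof you must replace ``$\forall b\,\exists\bar c$'' by this ``$\forall(p,p')\,\exists q$'' carry\-arrival verification, after first passing to base $r$ so that carries are binary; otherwise the carry\-consistency argument has no sound justification.
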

\begin{proof}
It suffices to show $\afgsss \in \mathsf{W}[5]$ because by Lemma~\ref{lemma-simple} we have $\afgks \lfpt \afgsss$. Let 
$t_i=(t_{i,1},\dots,t_{i,d}) \in \prod_{j=1}^d \integers_{n_j}$ for $i \in [1,m]$, 
$u=(u_1,\dots,u_d) \in \prod_{j=1}^d \integers_{n_j}$ and $k \in \naturals$ be the input for $\afgsss$. 
We can assume that $t_{i,j}, u_j \in [0,n_j-1]$ for all $i \in [1,m]$, $j \in [1,d]$.

First we give a reduction to an equivalent problem, where we avoid computations modulo $n_j$. We have
\begin{displaymath}
\exists x_1,  \ldots, x_m \in \{0,1\} : \sum_{i=1}^m x_i = k \text{ and }  \forall j \in [1,d] : \sum_{i=1}^m x_i t_{i,j} \equiv u_j \bmod n_j 
\end{displaymath}
if and only if 
\begin{displaymath}
\exists x_1,  \ldots, x_m \in \{0,1\} : \sum_{i=1}^m x_i = k \text{ and }  \forall j \in [1,d]  \; \exists y \in [0,k-1] : \sum_{i=1}^m x_i t_{i,j} = u_j + y \cdot n_j .
\end{displaymath}
The values $u_{y,j} := u_j + y \cdot n_j \in \naturals$ for $j \in [1,d]$ and $y \in [0,k-1]$
can be precomputed in $\mathsf{FPT}$. 
Now the input for our new problem consists of the tuples $t_1,\dots,t_m$ as defined above, 
the parameter $k \in \naturals$, and a matrix
\begin{displaymath}
 \begin{bmatrix}
u_{0,1} & \dots & u_{0,d}\\
\vdots & \ddots & \vdots\\
u_{k-1,1} & \dots & u_{k-1,d-1}
\end{bmatrix} \in \naturals^{k \times d} .
\end{displaymath}
This representation allows us to do all computations in $\naturals$.

Now we present  an algorithm for the above problem. Most parts of the algorithm are identical to the $\mathsf{W}[3]$-algorithm of \cite{bussislam} for $\Zsssr$. 
It is straightforward to transform this algorithm into a $\Sigma_{5,2}$-formula that only depends on $k$.
As in \cite{bussislam} we also represent all numbers in base $r$ with $r = 2^{\lceil \log k \rceil} \in [k,2k]$ by taking blocks of $\lceil \log k \rceil$ bits in the binary representations of the numbers.
Let us write 
\[
(b_0, b_1, \ldots, b_p)_r = \sum_{i=0}^p b_i r^i 
\]
for $b_0, \ldots, b_p \in [0,r-1]$.
Let $t_{i,j,p}$ (respectively, $u_{y,j,p}$) be the $p^{\text{th}}$ base-$r$ digit of the number $t_{i,j}$ (respectively, $u_{y,j}$). We start with $p=0$.
Assume that $m_j+1$ is the maximal number of digits in the 
base-$r$ expansions of the
numbers $t_{i,j}$, $u_{y,j}$ for $i \in [1,m]$, $y \in [0,k-1]$. The algorithm consists of the following steps:
\begin{enumerate}[(a)]
\item Existentially guess $k$ tuples $t_{i_1}, \ldots, t_{i_k}$ ($i_1 < i_2 < \cdots < i_k$).
\item Universally guess a coordinate $j \in [1,d]$.
\item Existentially guess a  $y \in [0,k-1]$. The remaining goal is to check whether 
$u_{y,j} = t_{i_1,j} + \cdots + t_{i_k,j}$. 
We define the following base-$r$ digits, where $p \in [0,m_j]$:
\begin{eqnarray*}
s_p  &=& (t_{i_1,j,p} + \cdots + t_{i_k,j,p}) \bmod r \in [0,r-1], \\
s_{m_j+1} & = & 0, \\
c_0 & = & 0, \\
c_{p+1} &=& (t_{i_1,j,p} + \cdots + t_{i_k,j,p}) \;\mathrm{div}\; r \in [0,r-1] .
\end{eqnarray*}
Note that $t_{i_1,j,p} + \cdots + t_{i_k,j,p} \leq k(r-1) \leq r(r-1) = r^2-r$ and
\[
t_{i_1,j} + \cdots + t_{i_k,j} = \sum_{p = 0}^{m_j+1} (s_p r^p +  c_p r^{p}) = \underbrace{(s_0, \ldots, s_{m_j+1})_r}_{v} + \underbrace{(c_0, \ldots, c_{m_j+1})_r}_{w} .
\]
The new goal is to check whether  $v+w = u_{y,j}$.
Note that when adding $v$ and $w$, the maximal carry that can arrive at a certain position is $1$ (since $2 (r-1) + 1 = (r-1,1)_r$).
\item Universally select a position $p \in [0,m_j+1]$.
\item Compute $\delta = (u_{y,j,p} - s_p - c_p) \bmod r$. Reject if $\delta$ is neither $0$ nor $1$.
\item If $\delta=0$ then do the following (we verify that no carry arrives at position $p$):
\begin{enumerate}[(i)]
\item Universally select a position $p' \in [0,p-1]$ that generates a carry (i.e., $s_{p'} + c_{p'} \geq r$).
\item Existentially select a position $q \in [p'+1,p-1]$ such that a carry from position $q-1$ is not propagated through position $q$
(i.e., $s_q + c_q < r-1$).
\end{enumerate}
\item If $\delta=1$ then do the following (we verify that a carry arrives at position $p$):
\begin{enumerate}[(ii)]
\item Universally select a position $p' \in [-1, p-1]$.
\item If $p' = -1$ or the digits at position $p'$ do not propagate a carry 
(i.e., $s_{p'} + c_{p'} < r-1$) then existentially select a position $q \in [p'+1,p-1]$ such that a carry is generated at position $q$ (i.e., $s_{q} + c_{q} \geq r$).
\end{enumerate}
\end{enumerate}
It is straightforward (but tedious) to translate the above algorithm into a $\Sigma_{5,2}$-sentence that only depends on the parameter $k$.
The existential (respectively, universal) guesses correspond to existential (respectively, universal) quantifiers in the formula.
The constructed formula is model-checked in a structure that is constructed from the input numbers.
For each of the following elements we add an element to the universe of the structure:
\begin{enumerate}[(1)]
\item every $i \in [1,m]$, where $i$ represents the tuple $t_i$,
\item every dimension $j \in [1,d]$,
\item every $y \in [0,k-1]$,
\item every position $p$ that exists in one of the base-$r$ expansions of the numbers $t_{i,j}$ and $u_{y,j}$, and
\item all $\alpha \in [0,r-1]$ that stand for the base-$r$ digits.
\end{enumerate}
Moreover, we need the following predicates:
\begin{itemize}
\item four unary predicates $I$, $J$, $Y$, and $P$ that hold exactly for the elements from (1), (2), (3), and (4), respectively,
\item a 4-ary relation $\mathsf{dig}_t$ that contains the tuple $(i, j, p, \alpha)$ if $\alpha \in [0,r-1]$ is the digit at position $p$ in the number $t_{i,j}$,
\item  a 4-ary relation $\mathsf{dig}_u$ that contains the tuple $(y, j, p, \alpha)$ if $\alpha \in [0,r-1]$ is the digit at position $p$ in the number $u_{y,j}$,
\item the natural linear order $<_P$ on the positions from (4), and
\item the natural linear order $<_r$  on $[0,r-1]$.
\end{itemize}
Note that the sets from (1)--(5) do not have to be disjoint. 

The $\Sigma_{5,2}$-formula starts with the prefix 
\begin{equation} \label{quant-prefix}
\exists i_1 \exists i_2 \cdots \exists i_k : \bigwedge_{l=1}^k I(i_l) \wedge \forall j : J(j) \to \exists y : Y(y) \wedge \forall p : P(p) \to \cdots .
\end{equation}
These quantifiers correspond to the guesses in steps (a)--(d) of the above algorithm.
For the remaining part of the formula 
we need for every $a \in [0,r-1]$ formulas 
$R_{a}(i_1, \ldots, i_k, j ,p)$ and $Q_{a}(i_1, \ldots, i_k, j ,p)$, which express
 $(t_{i_1,j,p} + \cdots + t_{i_k,j,p}) \bmod r = a$ and  
 $(t_{i_1,j,p} + \cdots + t_{i_k,j,p}) \; \mathrm{div} \; r = a$, respectively.  
 Here, we use  the variables quantified in \eqref{quant-prefix}.
 Define the finite set 
 \[ A_a = \{ (a_1, \ldots, a_k) \in [0,r-1]^k : (a_1 + \cdots + a_k) \bmod r = a \} \subseteq [0,r-1]^k .
 \]
 It can be precomputed from $k$ and its size only depends on $k$.
 Then we define the quantifier-free formula $R_{a}(i_1, \ldots, i_k, j ,p)$ by
 \[
 \bigvee_{ (a_1, \ldots, a_k) \in A_a} \bigwedge_{1 \le l \le k} \mathsf{dig}_t(i_l, j, p,a_l).
 \]
The formula $Q_{a}(i_1, \ldots, i_k, j ,p)$ can be obtained analogously. Note that these formulas only
depend on the parameter $k$.
With the formulas $R_{a}(i_1, \ldots, i_k, j ,p)$ and $Q_{a}(i_1, \ldots, i_k, j ,p)$ we can
access the numbers $s_p, c_p \in [0,r-1]$ from step (c) of the algorithm. Using them, one can 
construct analogously to $S_{a}(i_1, \ldots, i_k, j ,p)$
quantifier-free formulas expressing 
 conditions like $(u_{y,j,p} - s_p - c_p) \bmod r = 0$ and $(u_{y,j,p} - s_p - c_p) \bmod r = 1$
 (see steps (e)--(g) in the algorithm). 
We leave the further details to the reader.
\end{proof}

\section{Change making problems} \label{sec-change-making}

Note that in the following proofs for a positive integer $N \in \naturals$ we use $\#(N)$ to denote the number of bits of $N$.

The problem $\Nksleq$ (respectively, $\Nsssleqr$) was studied in \cite{goebbels} under the name {\sf p-unbounded-change-making}
(respectively,  {\sf p-0-1-change-making}). A third variant from \cite{goebbels} is:

\begin{problem}[\textsf{p-bounded-change-making} \cite{goebbels}]\label{p-bcm}~\\
Input: binary encoded numbers $c, c_1, b_1 \ldots, c_m, b_m \in \naturals$ with $c_i \neq c_j$ for $i \neq j$ and $k \in \naturals$ (the parameter)\\
Question: Are there $x_i \in [0, b_i]$ $(i \in [1,m])$ with $\sum_{i=1}^m x_i c_i = c$ and $\sum_{i=1}^m x_i \leq k$?
\end{problem}
It was shown in \cite{goebbels} that these change-making problems are $\mathsf{W}[1]$-hard and contained in $\mathsf{XP}$. 
For {\sf p-unbounded-change-making} and {\sf p-0-1-change-making} we obtain membership in $\mathsf{W}[3]$ from 
Theorem~\ref{maintheorem}. Also \textsf{p-bounded-change-making} belongs to $\mathsf{W}[3]$. For this, notice that
 $\Nksleq \lfpt \textsf{p-bounded-change-making}$: if we set $b_i = k$ for all $i \in [1,m]$ in \textsf{p-bounded-change-making},
 we obtain $\Nksleq$. Finally, also $\textsf{p-bounded-change-making} \lfpt \Nsssleqr$ holds: 
 because of the restriction $\sum_{i=1}^m x_i \leq k$ in \textsf{p-bounded-change-making}, 
 we can assume that $b_i \leq k$ for all $i \in [1,m]$. We obtain an instance of $\Nsssleq$ by duplicating every $c_i$ in a
 \textsf{p-bounded-change-making}-instance $k$ times.

\begin{theorem} 
The problem {\sf p-bounded-change-making} is equivalent with respect to fpt-reductions to the problems from Theorem~\ref{maintheorem} and therefore belongs to 
$\mathsf{W}[3]$.
\end{theorem}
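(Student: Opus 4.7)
The plan is to sandwich \textsf{p-bounded-change-making} between $\Nksleq$ and $\Nsssleq$ via elementary fpt-reductions and then invoke Theorem~\ref{maintheorem}, which asserts $\Nksleq \equiv^{\fpt} \Nsssleq$ and places both problems in $\mathsf{W}[3]$. This gives the full equivalence claim and the $\mathsf{W}[3]$ upper bound simultaneously.

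\textbf{Reduction $\Nksleq \lfpt \textsf{p-bounded-change-making}$.} Given an input $(c, c_1, \ldots, c_m, k)$ of $\Nksleq$, I would first merge duplicate coin values so that the remaining coin values are pairwise distinct, as required by the definition of \textsf{p-bounded-change-making}. This is harmless because the $x_i$ in $\Nksleq$ count multiplicities, so identifying two equal $c_i$'s into a single coin does not change the set of representable sums. For every remaining coin I set $b_i = k$. Since $x_i \geq 0$ and $\sum_i x_i \leq k$ already imply $x_i \leq k$, the bounds $b_i$ add no restriction, so the $\Nksleq$-instance and the resulting \textsf{p-bounded-change-making}-instance have the same feasible solutions.

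\textbf{Reduction $\textsf{p-bounded-change-making} \lfpt \Nsssleq$.} Given $(c, c_1, b_1, \ldots, c_m, b_m, k)$, the restriction $\sum_i x_i \leq k$ in any solution forces $x_i \leq k$, so I may replace every $b_i$ by $\min(b_i, k) \leq k$ without changing the problem. I then construct an $\Nsssleq$-instance whose list of items contains exactly $b_i$ copies of the value $c_i$ for each $i \in [1,m]$. The total length of the list is at most $mk$, hence polynomial in the input size, and the parameter remains $k$. A $\{0,1\}$-selection of at most $k$ items summing to $c$ in this list corresponds bijectively to a choice of integers $x_i \in [0, b_i]$ with $\sum_i x_i \leq k$ and $\sum_i x_i c_i = c$, i.e.\ to a solution of the original instance.

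\textbf{Conclusion and a subtlety.} Composing the two reductions with the equivalence $\Nksleq \equiv^{\fpt} \Nsssleq$ from Theorem~\ref{maintheorem} places \textsf{p-bounded-change-making} in the same fpt-equivalence class, and the containment $\Nsssleq \in \mathsf{W}[3]$ from the same theorem yields the upper bound. The only point that requires a moment of care is that the duplicating reduction naturally targets $\Nsssleq$ rather than $\Nsssleqr$, since repeated coin values violate pairwise distinctness; this is not an obstacle, as Corollary~\ref{coro-subset} identifies the two variants up to fpt-reductions.
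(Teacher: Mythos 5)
Your proof is correct and follows essentially the same route as the paper: it sandwiches \textsf{p-bounded-change-making} between $\Nksleq$ and $\Nsssleq$ via the trivial bound/duplication reductions and then invokes Theorem~\ref{maintheorem} together with Corollary~\ref{coro-subset}. You are in fact a bit more careful than the paper's own write-up on two minor technicalities (merging equal coin values so that the constructed \textsf{p-bounded-change-making}-instance has pairwise distinct $c_i$, and duplicating each $c_i$ exactly $\min(b_i,k)$ rather than $k$ times so that the bounds $b_i$ are actually enforced).
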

In  \cite{goebbels}, approximate versions of the change-making problems, where a linear objective function is minimized, were defined:
 
\begin{problem}[\textsf{p-unbounded-change-approx} \cite{goebbels}]\label{p-uca}~\\
Input: binary encoded numbers $c, c_1,\dots,c_m\in \naturals$ with $c_i \neq c_j$ for $i \neq j$, a linear function 
$f(x,y)=ax+by$ given by $a,b \in \naturals$ and $k \in \naturals$ (the parameter)\\
Question: Are there $x_1,\dots,x_m \in \naturals$ with $f(\sum_{i=1}^m x_ic_i-c,\sum_{i=1}^m x_i) \leq k$ and $\sum_{i=1}^m x_ic_i \geq c$?
\end{problem}

\begin{problem}[\textsf{p-bounded-change-approx} \cite{goebbels}]\label{p-bca}~\\
Input: binary encoded numbers  $c,c_1,b_1\dots,c_m,b_m\in \naturals$ with $c_i \neq c_j$ for $i \neq j$, a linear function $f(x,y)=ax+by$  given by $a,b \in \naturals$ and $k \in \naturals$
(the parameter)\\
Question: Are there  $x_1, \ldots, x_m \in [0, b_i]$  with $f(\sum_{i=1}^m x_ic_i-c,\sum_{i=1}^m x_i) \leq k$ and
 $\sum_{i=1}^m x_ic_i \geq c$?
\end{problem}
Finally, one obtains the problem \textsf{p-0-1-change-approx} by fixing $b_i$ to $1$ for all $i \in [1,m]$ in \textsf{p-bounded-change-approx} \cite{goebbels}.
Also for the above approximation variants, $\mathsf{W}[1]$-hardness and containment in \textsf{XP} was shown in
 \cite{goebbels}. With our results we can improve the upper bounds:

\begin{theorem} \label{thm-W3-approx}
The problems $\Nsss$, {\sf p-unbounded-change-approx}, {\sf p-bounded-change-approx} and {\sf p-0-1-change-approx}  are equivalent under fpt-reductions 
when restricted to functions
$f(x,y) = ax+by$
with $(a,b) \in \naturals \times \naturals \setminus \{(a,0) : a \geq 1\}$. Therefore the problems belong to $\mathsf{W}[3]$.
\end{theorem}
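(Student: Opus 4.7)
For the direction $\Nsss \lfpt $ {\sf p-0-1-change-approx} I would fix the simplest allowed function $f(x,y)=x+y$ (so $a=b=1$) and use an offset trick. Given an $\Nsss$-instance $(a,a_1,\ldots,a_m;k)$, first apply Lemma~\ref{thm-rep-free} to assume the $a_i$ are pairwise distinct (as required by the approx problems). Choose $L > k\cdot\max_i a_i + a$ and set $c_i := a_i + L$, $c := a + kL$, $k' := k$. A direct calculation shows that the two constraints $\sum_i x_i c_i \geq c$ and $(\sum_i x_i c_i - c) + \sum_i x_i \leq k'$ jointly force $\sum_i x_i = k$ and $\sum_i x_i a_i = a$: if $\sum_i x_i < k$ then the first constraint rewrites to $\sum_i x_i a_i - a \geq L(k-\sum_i x_i) \geq L$, impossible by the choice of $L$; the second constraint forces $\sum_i x_i \leq k$; and setting $\sum_i x_i = k$ collapses both constraints to $\sum_i x_i a_i = a$. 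Verbatim the same reduction with $b_i := 1$ yields $\Nsss \lfpt $ {\sf p-bounded-change-approx}, and applied to an instance of $\Nks$ (which is equivalent to $\Nsss$ by Theorem~\ref{maintheorem}) it yields $\Nsss \lfpt $ {\sf p-unbounded-change-approx}.

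For the converse direction I would reduce each change-approx problem to $\Nsss$ by a case distinction on the input function $f(x,y) = ax+by$. If $a = b = 0$ the constraint $f \leq k$ is vacuous and the question degenerates to $\sum_i x_i c_i \geq c$, decidable in polynomial time. If $a = 0$ and $b \geq 1$ then $b \sum_i x_i \leq k$ bounds the number of selected items by $\lfloor k/b \rfloor$, and for all three multiplicity regimes a greedy choice of largest denominations decides the problem in polynomial time. The interesting case is $a \geq 1$ and $b \geq 1$: here $\sum_i x_i \leq T := \lfloor k/b\rfloor \leq k$ and $\sum_i x_i c_i - c \leq E := \lfloor k/a\rfloor \leq k$, so both the total multiplicity $t$ and the excess $e$ are bounded by the parameter. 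I would enumerate the $O(k^2)$ pairs $(e,t) \in [0,E] \times [0,T]$ with $ae + bt \leq k$; for each of them, the restricted question ``is there a selection of total multiplicity exactly $t$ summing to exactly $c + e$?'' is an $\Nsss$-instance (in the $0$-$1$ case), an $\Nks$-instance (in the unbounded case), or, after replacing $b_i$ by $\min(b_i,t) \leq k$ and expanding by duplication, again an $\Nks$-instance (in the bounded case). By Theorem~\ref{maintheorem} each such sub-instance belongs to $\mathsf{W}[3]$.

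The main technical obstacle is combining these $O(k^2)$ sub-instances into a single fpt-reduction to $\Nsss$. I would address this via the logical characterization $\mathsf{W}[3] = \bigl[\pMC(\Sigma_{3,1})\bigr]^{\fpt}$: since each sub-instance is expressible by a $\Sigma_{3,1}$-sentence whose length depends only on $k$, the overall problem is captured by prefixing the disjunction of these sentences with one extra outer existential quantifier ranging over the $O(k^2)$ case indices, which yields a single $\Sigma_{3,1}$-sentence of parameter-bounded length over a polynomial-size structure. An explicit alternative is to merge the $O(k^2)$ $\Nsss$-instances into one $\Nsss$-instance by tagging every input number with its case index through the bit-concatenation trick from Lemma~\ref{thm-Zn-to-Z}. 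Either route yields membership of the three change-approx problems in $\mathsf{W}[3]$, and together with the hardness reductions from the first paragraph this completes the claimed fpt-equivalence with $\Nsss$.
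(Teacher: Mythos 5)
Your hardness direction (the offset trick with $c_i = a_i + L$, $c = a + kL$, and $f(x,y)=x+y$) is correct and essentially a variant of the paper's reduction, which instead uses $f(x,y)=(k+1)x+y$ and needs no offset because the term $(k+1)x$ already makes any positive excess $\sum x_i c_i - c$ fatal. For the converse direction, the case split on $(a,b)$ matches the paper, and the enumeration of the $O(k^2)$ pairs $(e,t)$ with $ae+bt\leq k$ is a legitimate way to linearize the constraint. But the step where you combine the sub-instances is where your argument is incomplete, and the two routes you offer do not actually discharge what is claimed.

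Route 1 (combine via a $\Sigma_{3,1}$-sentence with a selector quantifier) is feasible with some care, but at best it proves membership of the change-approx problems in $\mathsf{W}[3]$. It does \emph{not} give an fpt-reduction to $\Nsss$, because $\Nsss$ is not known to be $\mathsf{W}[3]$-complete (indeed, under the hypothesis of \cite{abboud} it is $\mathsf{W}[1]$-complete). Your closing sentence treats ``membership in $\mathsf{W}[3]$'' as sufficient for ``fpt-equivalence with $\Nsss$,'' which conflates the two; the theorem asserts both. Route 2 (merge the $O(k^2)$ $\Nsss$-instances via the bit-concatenation of Lemma~\ref{thm-Zn-to-Z}) is where the equivalence would have to come from, but as stated it does not work: Lemma~\ref{thm-Zn-to-Z} flattens tuples to integers, it is not an OR-composition device. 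The sub-instances have \emph{different} targets $c+e$ and \emph{different} multiplicities $t$, so simply tagging each element with its case index leaves you with $O(k^2)$ different target vectors and no single subset-sum instance. One can repair this — add one ``case element'' per pair $(e,t)$ that absorbs the excess $e$ and pads the multiplicity to $k$, in a separate coordinate block that forces exactly one case element — and then invoke Lemma~\ref{thm-Zn-to-Z} and Corollary~\ref{coro-subset}, but that gadget is the real content and you do not describe it.

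The paper avoids the enumeration entirely. Claim~\ref{claim1appendix} introduces three auxiliary ``slack'' quantities $x_0, x_{m+1}, x_{m+2}$ (with $x_{m+1}$ playing the role of your excess $e$), packs them with the $c_i$ into a single $\Zsssleq$-instance via a two-level bit-concatenation with blocks of widths $\#(kb)$ and $\#(ka)$, and then the single target equation $\sum_i x_i d_i = d$ decomposes into equations~\eqref{approxchangemakingeq2-app}--\eqref{approxchangemakingeq4-app}, which together with $\sum_i x_i \leq k$ are exactly equivalent to the change-approx constraints. This is a one-shot arithmetic reduction; your case-enumeration strategy is a different (and conceptually clear) route, but to make it yield the fpt-equivalence you must actually build the OR-composition gadget rather than gesture at Lemma~\ref{thm-Zn-to-Z}.
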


\begin{proof}
We prove the theorem in the following order:
\begin{enumerate}[(i)]
\item $\Nksleq \lfpt$ \textsf{p-unbounded-change-approx},
\item \textsf{p-unbounded-change-approx} $\lfpt$ \textsf{p-bounded-change-approx},
\item $\Nsssleqr$ $\lfpt$ \textsf{p-0-1-change-approx},
\item \textsf{p-0-1-change-approx} $\lfpt$ \textsf{p-bounded-change-approx},
\item \textsf{p-bounded-change-approx} $\lfpt$ $\Zsssleq$.
\end{enumerate}
Since (iv) is trivial and (iii) can be proven analogously to (i) we only show the reductions for (i), (ii) and (v).
For (i) notice that an instance of $\Nksleq$ can be transformed into an equivalent instance
of \textsf{p-unbounded-change-approx} by taking the linear function $f(x,y) = (k+1)x + y$.

For (ii) let $c, c_1,\ldots,c_m, f(x,y) = ax+by$, and $k$ be the input data for
 \textsf{p-unbounded-change-approx}. By setting $b_i=k$ for $i \in [1,m]$ we obtain an equivalent instance of \textsf{p-bounded-change-approx} if $b > 0$. If $b=0$ then also $a=0$ and hence the condition $f(\sum_{i=1}^m x_ic_i-c,\sum_{i=1}^m x_i) \leq k$ is always satisfied.
The \textsf{p-unbounded-change-approx}-instance is then positive if and only if $c=0$ or $c_i > 0$ for at least one $i \in [1,m]$.

It remains to show (v). Let $c, c_1,b_1,\dots,c_m,b_m, f(x,y) = ax+by$, and $k$ be the input data for \textsf{p-bounded-change-approx}.
If $a=0=b$ the instance is positive if and only if $\sum_{i=1}^m b_ic_i \geq c$. 

Suppose now that $a = 0$ and $b \geq 1$ and w.l.o.g.~$c_1 > c_2 > \cdots > c_m$.
We show that it can be checked in polynomial time whether the \textsf{p-bounded-change-approx}-instance is positive.
The latter holds if and only if there exist 
$x_i \in [0, b_i]$ $(i \in [1,m])$ with
 $\sum_{i=1}^m x_ic_i \geq c$ and $\sum_{i=1}^m x_i  \leq \lfloor k/b\rfloor$. We can w.l.o.g.~assume that $b_i > 0$
 for all $i \in [1,m]$. If $\sum_{i=1}^m b_i \leq \lfloor \frac{k}{b} \rfloor$ then it suffices to check whether
$\sum_{i=1}^m b_ic_i \geq c$.
If $\sum_{i=1}^m b_i > \lfloor \frac{k}{b} \rfloor$ then 
we compute the unique $l \in [0,m-1]$ and $b'_{l+1}< b_{l+1}$ such that
$\sum_{i=1}^l b_{i} + b'_{l+1} = \left\lfloor \frac{k}{b} \right\rfloor$.
It then suffices to check whether
$\sum_{i=1}^l b_{i}c_{i} + b'_{l+1}c_{l+1} \geq c$.

The case that remains is $a,b \geq 1$.  We define the following integers:
\begin{alignat*}{2}
d &= c \cdot 2^{\#(ka)+\#(kb)} \qquad & d_i &= b-1 + c_i \cdot 2^{\#(ka)+\#(kb)}  \text{ for } i \in [1,m] \\
d_0 &= -1 & d_{m+1} &= (1-a)  2^{\#(kb)}  - 2^{\#(ka)+\#(kb)} \\
& & d_{m+2} &= 2^{\#(kb)} .
\end{alignat*}
Moreover, we set $k = b_0 = b_{m+1} = b_{m+2}$.

\begin{newclaim} \label{claim1appendix}
The \textsf{p-bounded-change-approx}-instance $c, c_1,b_1,\dots,c_m,b_m, f(x,y) = ax+by$, $k$ is positive if and only if
there exist $x_i \in [0, b_i]$ $(0 \leq i \leq m+2)$ such that
\begin{equation*}
\sum_{i=0}^{m+2} x_id_i = d \quad \text{ and } \quad
\sum_{i=0}^{m+2} x_i \leq k .
\end{equation*}
\end{newclaim}

\noindent
\emph{Proof of Claim~\ref{claim1appendix}.}
First suppose the \textsf{p-bounded-change-approx}-instance is positive. Then there exist $x_i \in [0, b_i]$ with 
\begin{equation*}
\sum_{i=1}^m x_ic_i  \geq  c \quad\text{ and }\quad
a\bigg(\sum_{i=1}^m x_ic_i - c\bigg) + b\sum_{i=1}^m x_i  \leq  k.
\end{equation*}
By choosing $x_0 = (b-1)\sum_{i=1}^m x_i$ and $x_{m+1} = \sum_{i=1}^m x_ic_i - c$ and $x_{m+2} = (a-1)(\sum_{i=1}^m x_ic_i - c)$ we obtain
\begin{align*}
\sum_{i=0}^{m+2} x_id_i=\phantom{ }&(b-1)\big(\sum_{i=1}^m x_i\big)d_0 + \sum_{i=1}^m x_id_i + \big(\sum_{i=1}^m x_ic_i - c\big)d_{m+1} + (a-1) \big(\sum_{i=1}^m x_ic_i - c\big)d_{m+2}\\
=\phantom{ }&-(b-1)\sum_{i=1}^m x_i + \sum_{i=1}^m x_i(b-1) + \sum_{i=1}^m x_ic_i \cdot 2^{\#(ka)+\#(kb)}\\
\phantom{ }&+ \big(\sum_{i=1}^m x_ic_i - c\big)(1-a) \cdot 2^{\#(kb)} - \big(\sum_{i=1}^m x_ic_i - c\big) \cdot 2^{\#(ka)+\#(kb)}\\
\phantom{ }&+ (a-1)\big(\sum_{i=1}^m x_ic_i - c\big) \cdot 2^{\#(kb)}\\
=\phantom{ }&\sum_{i=1}^m x_ic_i \cdot 2^{\#(ka)+\#(kb)} - \big(\sum_{i=1}^m x_ic_i - c\big) \cdot 2^{\#(ka)+\#(kb)}\\
=\phantom{ }&c \cdot 2^{\#(ka)+\#(kb)} = d
\end{align*}
and
\begin{align*}
\sum_{i=0}^{m+2} x_i&=(b-1)\sum_{i=1}^m x_i + \sum_{i=1}^m x_i + \sum_{i=1}^m x_ic_i - c + (a-1)\big(\sum_{i=1}^m x_ic_i - c\big)\\
&=a\big(\sum_{i=1}^m x_ic_i - c\big) + b\sum_{i=1}^m x_i \leq k .
\end{align*}
Therefore we have $x_0,x_{m+1},x_{m+2} \in [0,k]$. Thus $x_i \in [0,b_i]$ for all $i \in [0,m+2]$.

For the other direction in Claim~\ref{claim1appendix} assume that $x_i \in [0,b_i]$ are such that 
\begin{equation}\label{approxchangemakingeq1-app}
\sum_{i=0}^{m+2} x_id_i = d \quad \text{ and } \quad \sum_{i=0}^{m+2} x_i \leq k .
\end{equation}
We bring the left equation in~\eqref{approxchangemakingeq1-app} in a form where all numbers are positive:
\begin{displaymath}
\sum_{i=1}^m x_id_i + x_{m+2}d_{m+2} = d + x_0(-d_0) + x_{m+1}(-d_{m+1}), 
\end{displaymath}
or, after plugging in the values of the $d_i$ and rearranging,
\begin{alignat*}{5}
    & & \sum_{i=1}^m x_i(b-1) & \ + \  & x_{m+2}  & \ 2^{\#(kb)}  & & \ + \ \bigg(\sum_{i=1}^m x_i c_i \bigg) & & \ 2^{\#(ka)+\#(kb)}   \\
= &  & x_0 &  \ + \  & x_{m+1} (a-1) & \ 2^{\#(kb)}  & & \ + \ (c+x_{m+1})  & & \  2^{\#(ka)+\#(kb)}   . 
\end{alignat*}
Note that $\sum_{i=1}^m x_i(b-1)$ and $x_0$ are bounded by $kb$ and therefore have at most $\#(kb)$ bits. 
All other terms in the above equation are multiplied with $2^{\#(kb)}$ and therefore do not affect the first $\#(kb)$ bits. 
Similarly, $x_{m+2}$ and $x_{m+1} (a-1)$ are bounded by $\#(ka)$ and therefore have at most $\#(ka)$ bits.
We therefore split the above equation into three simultaneous equations:
\begin{eqnarray}
\sum_{i=1}^m x_i(b-1) &=& x_0 \label{approxchangemakingeq2-app} \\
x_{m+2}  &=& x_{m+1} (a-1) \label{approxchangemakingeq3-app} \\
\sum_{i=1}^m x_ic_i &=& c  + x_{m+1} . \label{approxchangemakingeq4-app}
\end{eqnarray}
By~\eqref{approxchangemakingeq4-app} we have $x_{m+1} = \sum_{i=1}^m x_ic_i - c$. Together with~\eqref{approxchangemakingeq3-app} this gives us $x_{m+2}=(a-1)(\sum_{i=1}^m x_ic_i - c)$. Adding up all variables and using~\eqref{approxchangemakingeq2-app} yields
\begin{eqnarray*}
k \geq \sum_{i=0}^{m+2} x_i & = & \sum_{i=1}^m x_i(b-1) + \sum_{i=1}^m x_i + \sum_{i=1}^m x_ic_i - c + (a-1)\bigg(\sum_{i=1}^m x_ic_i - c\bigg)\\
& = & a\bigg(\sum_{i=1}^m x_ic_i - c\bigg) + b\sum_{i=1}^m x_i .
\end{eqnarray*}
Moreover since we have $x_{m+1} \geq 0$, \eqref{approxchangemakingeq4-app} finally implies $\sum_{i=1}^m x_ic_i \geq c$ which proves the claim.
\qed

\medskip
\noindent
By Claim~\ref{claim1appendix} we obtain an equivalent instance of $\Zsssleq$ by defining a list where for $i \in [0,m+2]$ the number $d_i$ is contained exactly $b_i$ times.
\end{proof}

\subsection{NP-hardness of the approximative change-making problems for \texorpdfstring{$a > 0$}{a>0} and \texorpdfstring{$b=0$}{b=0}}

\begin{theorem}\label{np-completeslices}
Let $a \geq 1$ and $b=0$. Then for all $d \geq 1$ the $d^{\text{th}}$ slices of the problems {\sf p-unbounded-change-approx}, {\sf p-bounded-change-approx} and {\sf p-0-1-change-approx} are {\sf NP}-complete. Moreover, these three problems are in {\sf para-NP}.
\end{theorem}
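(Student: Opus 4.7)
The plan is to attack both halves---slicewise NP-completeness and overall para-NP membership---via one structural simplification of the objective.

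First I would unpack the constraint. With $b=0$ and $a\geq 1$, the inequality $f\bigl(\sum_i x_ic_i-c,\sum_i x_i\bigr)\leq k$ is equivalent to $a\bigl(\sum_i x_ic_i-c\bigr)\leq k$, so combined with $\sum_i x_ic_i\geq c$ the attainable sum is trapped in $\bigl[c,\,c+\lfloor k/a\rfloor\bigr]$. In the reduction to the $d^{\text{th}}$ slice the parameter is fixed to $k=d$, but the coefficient $a$ is still an input I may choose; taking $a:=d+1$ collapses the window to the single value $\sum_i x_ic_i=c$, eliminating all slack.

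Under $a=d+1$, the three slices reduce to classical NP-complete problems on distinct denominations. The \textsf{p-0-1-change-approx} slice is literally (0-1) \textsc{Subset Sum} with distinct positive integers, Karp's original NP-complete problem. The \textsf{p-bounded-change-approx} slice specialises to the same problem by additionally fixing $b_i:=1$. The \textsf{p-unbounded-change-approx} slice becomes \textsc{Unbounded Subset Sum} (\textsc{Integer Knapsack}; Garey--Johnson problem MP9) with distinct coins, and the distinctness comes for free since duplicated denominations can always be merged without changing the set of unbounded solutions. These reductions give NP-hardness of each slice.

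For the matching upper bounds I would run a uniform certificate-size argument. Any solution $(x_1,\dots,x_m)$ satisfies $x_ic_i\leq c+k$: coordinates with $c_i=0$ may be set to $0$ without loss of generality, and the remaining coordinates obey $x_i\leq c+k$, which has $\mathcal{O}(\log c+\log k)$ bits and is therefore polynomial in $|u|$ and $\log k$. A nondeterministic machine guesses such a vector in time $\poly(|u|)\cdot\poly(\log k)$ and verifies the two arithmetic conditions in polynomial time, simultaneously witnessing NP-membership of each fixed slice and membership of all three problems in \textsf{para-NP}. The only delicate point I anticipate is the distinctness restriction on the coins in the reductions, but as noted it is immediate in each case.
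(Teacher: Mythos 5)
Your proof is correct but takes a genuinely different route from the paper's. The paper collapses the slack window $[c,\,c+\lfloor k/a\rfloor]$ not by choosing $a=d+1$ but by scaling every coin and the target by $(d+1)$: this makes $\sum_i x_ic_i - c$ a multiple of $d+1 > d \geq d/a$, forcing equality for \emph{every} coefficient $a\geq 1$, whereas your construction commits to $a:=d+1$ and so establishes hardness only for instances where $a$ is allowed to grow with $d$. For the theorem as stated (the problems restricted to $a\geq 1$, $b=0$) this is immaterial, but the paper's scaling is marginally more robust. The larger structural difference is how distinctness and the three variants are handled. You reduce the $0$-$1$ and bounded cases from $0$-$1$ subset sum with distinct integers and the unbounded case from integer knapsack (deduplicating coins), appealing to the known NP-completeness of each; the paper instead gives a single unified reduction from ordinary subset sum (with possibly repeated items $a_1\leq\cdots\leq a_m$), constructing distinct coins $c_1,\dots,c_{2m}$ via a block encoding whose carry-free structure also forces any nonnegative-integer solution of $\sum_i x_ic_i=c$ to be $\{0,1\}$-valued with weight exactly $m$. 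That one gadget then serves the unbounded, bounded and $0$-$1$ variants simultaneously. Your decomposition is lighter but leans on the NP-completeness of two specific distinct-instance formulations as imported facts rather than building them in. On the membership side, your certificate bound $x_i\leq c+k$ (after zeroing any coordinate with $c_i=0$) differs mildly from the paper's $x_i\leq\lceil c/c_i\rceil$ in the unbounded case, but both give a nondeterministic verifier running in time $\poly(|u|)\cdot\poly(k)$, which settles NP membership of each slice and \textsf{para-NP} membership of the three problems.
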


\begin{proof}
Membership of all three problems in \textsf{NP} is straightforward. We guess numbers $x_i \in [0,\lceil \frac{c}{c_i} \rceil]$ (for {\sf p-unbounded-change-approx}),
$x_i \in [0,b_i]$ (for {\sf p-bounded-change-approx}) and $x_i \in \{0,1\}$ (for {\sf p-0-1-change-approx}), respectively, and verify
\begin{displaymath}
\sum_{i=1}^m x_ic_i \geq c 
\quad \text{ and } \quad
a\left(\sum_{i=1}^m x_ic_i - c\right) \leq d .
\end{displaymath}
The same algorithm shows membership in \textsf{para-NP} for the parameterized problems.

Now we show \textsf{NP}-hardness. The proof uses a modification of the construction given in the appendix of \cite{goebbels}. We start with an instance of the \textsf{NP}-complete problem \textsc{subset sum} where we are given positive integers $a_1 \leq a_2 \leq \cdots \leq a_m$ and $a \in \naturals$ in binary encoding. The question is whether there
exist $x_1,\ldots,x_m \in \{0,1\}$ such that $\sum_{i=1}^m x_ia_i = a$.
W.l.o.g.~we can assume $a \leq ma_m < 2^{\#(ma_m)}$. Otherwise the instance is negative and the reduction yields a fixed negative instance. Furthermore we can assume w.l.o.g.~$a_i \neq 0$ for all $i \in [1,m]$. If $a > 0$ we can remove from the list all numbers that are equal to $0$. In the case $a=0$ the instance is trivially positive by choosing $x_i=0$ for all variables and the reduction yields a fixed positive instance. We define for $i \in [1,m]$ the numbers
\begin{eqnarray*}
c_{2i-1}&=& 0 + 2^{\#(ma_m)+(i-1)\#(m)} + 2^{\#(ma_m)+m\#(m)},\\
c_{2i}&=& a_i + 2^{\#(ma_m)+(i-1)\#(m)} + 2^{\#(ma_m)+m\#(m)}, \text{ and } \\
c &=& a + \sum_{i=1}^m 2^{\#(ma_m)+(i-1)\#(m)} + m \cdot 2^{\#(ma_m)+m\#(m)} .
\end{eqnarray*}
Note that we have $c_1 < c_2 < \dots < c_{2m}$. 

\begin{newclaim} \label{another claim}
If $\sum_{i=1}^m x_ia_i = a$ with $x_i \in \{0,1\}$ then we have 
\[\sum_{i=1}^m (x_ic_{2i} + (1-x_i)c_{2i-1}) =c.\]
\end{newclaim}

\noindent
\emph{Proof of Claim~\ref{another claim}.}
We have
\begin{align*}
\sum_{i=1}^m (x_ic_{2i} + (1-x_i)c_{2i-1})&= \sum_{i=1}^m x_ia_i + \sum_{i=1}^m 2^{\#(ma_m)+(i-1)\#(m)} + \sum_{i=1}^m 2^{\#(ma_m)+m\#(m)}\\
&= a + \sum_{i=1}^m 2^{\#(ma_m)+(i-1)\#(m)} + m \cdot 2^{\#(ma_m)+m\#(m)}\\
&=c,
\end{align*}
which proves the claim.
\qed

\begin{newclaim} \label{claim3}
Assume that $\sum_{i=1}^{2m} x_ic_i = c$ for $x_1, \ldots, x_{2m} \in \naturals$. Then the following holds: \bigskip
\begin{enumerate}
\item $\sum_{i=1}^{2m} x_i \leq m$ \bigskip
\item For $i \in [1,m]$ we have $x_{2i-1},x_{2i} \in \{0,1\}$ and $x_{2i-1} = 1$ if and only if $x_{2i}=0$. \bigskip
\item $\sum_{i=1}^m x_{2i}a_i = a$
\end{enumerate}
\end{newclaim}

\noindent
\emph{Proof of Claim~\ref{claim3}.}
For the first statement, suppose that $\sum_{i=1}^{2m} x_i > m$.
Then we obtain
\begin{eqnarray*}
\sum_{i=1}^{2m} x_ic_i &\geq& (m+1) \cdot c_1\\
&=& (m+1) \cdot (2^{\#(ma_m)} + 2^{\#(ma_m)+m\#(m)})\\
&\geq& 2^{\#(ma_m)} + (m+1) \cdot 2^{\#(ma_m)+m\#(m)}\\
&>& a + (m+1) \cdot 2^{\#(ma_m)+m\#(m)}\\
&=& a + m \cdot 2^{\#(ma_m)+m\#(m)} + 2^{\#(ma_m)} \cdot 2^{m\#(m)}\\
&>& a + m \cdot 2^{\#(ma_m)+m\#(m)} + 2^{\#(ma_m)} \cdot \sum_{i=1}^m \big(2^{\#(m)}\big)^{i-1}\\
&=& a + m \cdot 2^{\#(ma_m)+m\#(m)} + \sum_{i=1}^m 2^{\#(ma_m)+(i-1)\#(m)} \ = \ c ,
\end{eqnarray*}
which is a contradiction.

Let us now show the second statement of Claim~\ref{claim3}. By the first statement 
we have $\sum_{i=1}^{2m} x_i \leq m$. Hence, the sum $c = \sum_{i=1}^{2m} x_ic_i$ is a 
sum of $m$ numbers $c_i$, possibly with repetitions. The binary representations of the $c_i$ are divided into blocks. The first block contains the $\#(ma_m)$ first bits and the next blocks consist of exactly $\#(m)$ bits each. The lengths of the blocks are such that in the sum $\sum_{i=1}^{2m} x_ic_i$ a carry cannot enter a block from the previous block.
The bit at position $1 + \#(ma_m)+(i-1)\#(m)$ (i.e., the first bit of the $i^{\text{th}}$ block) in $c, c_{2i-1}, c_{2i}$ is $1$ (due to the summand $2^{\#(ma_m)+(i-1)\#(m)}$ in these numbers). 
In all other numbers $c_j$, the bit at position $1 + \#(ma_m)+(i-1)\#(m)$ is zero and by adding at most $m$ numbers $c_i$ one cannot get a carry at position $1 + \#(ma_m)+(i-1)\#(m)$.
This implies that $x_{2i-1},x_{2i} \in \{0,1\}$ and $x_{2i-1} = 1$ if and only if $x_{2i}=0$.

Finally, the third statement of Claim~\ref{claim3} follows from 
 the identity $c = \sum_{i=1}^{2m} x_ic_i$ (where $\sum_{i=1}^{2m} x_i \leq m$ by the first statement) and only taking the first block consisting of the $\#(ma_m)$
 low-order bits.
 \qed
 
 \medskip
 \noindent
Claim~\ref{claim3} gives us a reduction from the
subset sum equation
\begin{displaymath}
\sum_{i=1}^m x_ia_i = a
\end{displaymath}
to an equation
\begin{displaymath}
\sum_{i=1}^{2m} x_ic_i = c
\end{displaymath}
with $c_1 < \dots < c_{2m}$. Moreover since by Claim~\ref{claim3} (second statement) it is ensured that we have for all variables $x_i \in \{0,1\}$ this reduction works in both the bounded and unbounded setting. 

Now we come to the actual reduction from \textsc{subset sum} to the $d^{\text{th}}$ slice of {\sf p-unbounded-change-approx} (respectively, {\sf p-bounded-change-approx} and {\sf p-0-1-change-approx}).
We have $\sum_{i=1}^{2m} x_ic_i = c$ if and only if $\sum_{i=1}^{2m} x_i(d+1)c_i = (d+1)c$, which holds if and only if the following two conditions hold:
\begin{eqnarray*}
\sum_{i=1}^{2m} x_i (d+1)c_i & \geq & (d+1)c , \\
a \left(\sum_{i=1}^m x_i(d+1)c_i -(d+1)c\right) & \leq & d .
\end{eqnarray*}
Note that $a \geq 1$. This concludes the reduction.
\end{proof}

\begin{theorem}
When restricted to $a \geq 1$ and $b=0$, 
 {\sf p-unbounded-change-approx}, {\sf p-bounded-change-approx} and {\sf p-0-1-change-approx} are {\sf para-NP}-complete.
\end{theorem}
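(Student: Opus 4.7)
The plan is to deduce para-NP-completeness directly from the preceding Theorem~\ref{np-completeslices}, since essentially all the hard work has already been done there. I would invoke the standard Flum--Grohe characterization of para-NP-hardness: a parameterized problem $L$ is para-NP-hard under fpt-reductions if and only if there exists some fixed value $k_0$ such that the slice $L_{k_0}$ is NP-hard under polynomial-time many-one reductions.

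Theorem~\ref{np-completeslices} supplies both ingredients for the three problems under the restriction $a \geq 1$, $b=0$. First, it already asserts membership in para-NP, witnessed by the nondeterministic algorithm that guesses the $x_i$ (bounded respectively by $\lceil c/c_i \rceil$, by $b_i$, or by $\{0,1\}$) and verifies $\sum_{i=1}^m x_i c_i \geq c$ together with $a(\sum_{i=1}^m x_i c_i - c) \leq d$ in polynomial time. Second, for every $d \geq 1$ the $d^{\text{th}}$ slice of each of the three problems is NP-complete under this restriction; in particular, fixing $d = 1$ gives an NP-hard slice.

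From the NP-hardness of the $d=1$ slice one obtains para-NP-hardness essentially for free: composing any polynomial-time many-one reduction from an NP-hard problem (e.g.~\textsc{subset sum}) into that slice with the trivial map $x \mapsto (x,1)$ yields an fpt-reduction into the full parameterized problem. Combined with membership in para-NP, this immediately gives para-NP-completeness.

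Since the genuine technical content---the binary-block encoding reduction from \textsc{subset sum} that makes each slice NP-hard even after the surplus $a(\sum_i x_i c_i - c) \leq d$ is permitted---was already carried out in the proof of Theorem~\ref{np-completeslices}, no genuinely new obstacle arises here. The only remaining step is the routine reduction-composition described above, which is a direct application of the Flum--Grohe slicewise characterization.
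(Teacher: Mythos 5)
Your proposal is correct and takes essentially the same route as the paper: both invoke the Flum--Grohe slicewise characterization of \textsf{para-NP}-completeness (the paper cites \cite[Theorem 2.14]{flumgrohe}) together with Theorem~\ref{np-completeslices}, which already establishes \textsf{NP}-completeness of a single slice and membership in \textsf{para-NP}.
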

\begin{proof}
By \cite[Theorem 2.14]{flumgrohe} a nontrivial parameterized problem in \textsf{para-NP} is \textsf{para-NP}-complete under fpt-reductions if and only if the union of finitely many slices is \textsf{NP}-complete under polynomial time reductions (the $d^{\text{th}}$ slice of a parameterized problem is the restriction of the problem, where the parameter is set to the fixed value $d$). 
By Theorem~\ref{np-completeslices} already a single slice of the problems from the theorem is \textsf{NP}-complete under polynomial time reductions.
\end{proof}

\section{Conclusion}
In this paper we have shown the equivalence of $\sgfactor$, $\sgks$ and $\sgsss$ with respect to fpt-reductions. This may provide a tool for showing new upper and lower bounds for $\sgfactor$ in the \textsf{W}-hierarchy because the equivalence resolves the problem to ensure the generators to factor in a specific order. The equivalence allows now to give the generators in a suitable order.

Moreover we have shown equivalence with respect to fpt-reductions of several problems, namely subset sum problems (for integers), change making problems and the special case of permutation group factorization for cyclic permutation groups ($\cpgfactor$). By this and the membership in $\mathsf{W}[3]$ for the subset sum problem \cite{bussislam} we obtain membership in $\mathsf{W}[3]$ of the change making problems which addresses the gap between $\mathsf{W}[1]$-hardness and membership in $\mathsf{XP}$ that was obtained in \cite{goebbels}. In the same paper it was also asked for better lower bounds. However, improving the $\mathsf{W}[1]$-hardness 
 for any of the change making problems seems to be very challenging for several reasons.
 \begin{itemize}
 \item 
Because of the equivalence with respect to fpt-reductions, a better lower bound for the change making problems would directly
yield a better lower bound for permutation group factorization, for which in almost 30 years since the appearance of the paper \cite{caichendowneyfellows} nothing better than $\mathsf{W}[1]$-hardness has been shown.
\item Proving $\mathsf{W}[3]$-completeness of the change making problems (and hence the  problems from Theorem~\ref{maintheorem}) would be surprising since it would imply
\begin{displaymath}
\textsf{W}[3] \subseteq \textsf{W}^{\text{func}}[2] \subseteq \textsf{A}[2],
\end{displaymath}
because permutation group factorization is a special case of transformation monoid factorization ($\tmfactor$) which is contained in $\textsf{W}^{\text{func}}[2]$ by Theorem~\ref{thm-tmfactor}. By \cite{flumgrohe} we only know
\begin{displaymath}
\textsf{W}[2] \subseteq \textsf{W}^{\text{func}}[2] \subseteq \textsf{A}[2].
\end{displaymath}
\item The previous point still leaves room for an $\mathsf{W}[2]$-hardness proof. But there has 
been some progress in showing $\mathsf{W}[1]$-completeness for subset sum over $\integers$ \cite{abboud}, which makes it unlikely that better lower bounds are achievable. In \cite{abboud} the following variant of subset sum over $\integers$ has been studied:
The input  consists of integers $z_1,\dots,z_m \in \integers$ and $k \in \naturals$ (the parameter) and it is asked whether 
there are numbers $x_1,\dots,x_m \in \{0,1\}$ such that
\begin{displaymath}
\sum_{i=1}^m x_iz_i = 0 \text{ with } \sum_{i=1}^m x_i = k.
\end{displaymath}
It is shown in \cite{abboud} that this problem is $\mathsf{W}[1]$-complete when the numbers $z_1,\dots,z_m$ are restricted to $[-n^{2k},n^{2k}]$.
In \cite{abboud} the result was also 
improved to $\mathsf{W}[1]$-completeness for unrestricted integers $z_1,\dots,z_m$
under the assumption that a certain circuit lower bound holds.

Note that the subset sum version from \cite{abboud} is a slight variant of our subset sum version $\Zsss$, 
where 
the input consists of integers $z_1,\dots,z_m,z \in \integers$ and $k \in \naturals$ (the parameter) and it is asked whether there are numbers $x_1,\dots,x_m \in \{0,1\}$ such that
\begin{equation} \label{our-subsetsum}
\sum_{i=1}^m x_iz_i = z \text{ with } \sum_{i=1}^m x_i = k.
\end{equation}
But for $k \geq 1$ this is equivalent to asking whether there are numbers $x_1,\dots,x_m \in \{0,1\}$ such that
\begin{displaymath}
\sum_{i=1}^m x_i(kz_i-z) = 0 \text{ with } \sum_{i=1}^m x_i = k
\end{displaymath}
In the case $k = 0$, \eqref{our-subsetsum} holds if and only if $z = 0$. 
Hence, the subset sum variant from  \cite{abboud} is equivalent to  $\Zsss$ 
with respect to fpt-reductions.
Under the circuit lower bound assumption from \cite{abboud} also all problems mentioned in Theorem~\ref{maintheorem} are $\mathsf{W}[1]$-complete since all these problems are equivalent to $\Zsss$ with respect to fpt-reductions.
\end{itemize}
It would be also interesting to study the parameterized complexity
of $\tmfactor$ for restricted classes of transformation monoids (e.g. aperiodic or commutative transformation monoids).
Due to the fpt-equivalence of $\tmfactor$ and $\mathsf{p\text{-}DFA\text{-}SW}$, this question should be
related to the parameterized complexity of $\mathsf{p\text{-}DFA\text{-}SW}$ (Problem~\ref{pdfasw}) for 
DFAs where the transformation monoid of the DFA is from a restricted class.
For the non-parameterized version of $\mathsf{p\text{-}DFA\text{-}SW}$, several {\sf NP}-completeness
results for some algebraic classes of transformation monoids (aperiodic monoids and commutative monoids among others) have been
shown in \cite{Martyugin09}.

A generalization of the parameterized factorization problem $\Cfactor$
is the \emph{parameterized rational subset membership problem} for the class of monoids $\mathsf{C}$. The input consists of a monoid $M \in \mathsf{C}$, a finite (nondeterministic) automaton $\mathcal{A}$, whose transitions are
labelled with elements from $M$ and an additional element $a \in M$. The automaton $\mathcal{A}$ defines a subset $L(\mathcal{A}) \subseteq M$ in the natural way
(take the set of all paths in $\mathcal{A}$ from an initial state to a final state and for each path multiply the $M$-labels of the transitions in the 
order given by the path) and the question is whether $a \in L(\mathcal{A})$. For the parameter we take the number of states of 
$\mathcal{A}$.
Clearly, all lower bounds for $\Cfactor$ (for every class $\mathsf{C}$) are inherited by the 
parameterized rational subset membership problem for $\mathsf{C}$.
One might therefore investigate whether better lower bounds can be shown for the 
parameterized rational subset membership problem or whether there is an fpt-reduction from 
the parameterized rational subset membership problem for a class $\mathsf{C}$ to $\Cfactor$.
The non-parameterized rational subset membership problem for permutation groups was shown to 
be $\mathsf{NP}$-complete in \cite{Khashaev22,LohreyRZ22}.

\bibliographystyle{abbrvnat}
\bibliography{bib}

\appendix

\section{Additional material on cyclic permutation groups} \label{appendix-cyclic}

\begin{lemma}\label{lemmacyclicgroup}
There is an algorithm that given two permutations $\alpha,\beta \in S_n$ checks in polynomial time whether $\langle \alpha,\beta \rangle$ is cyclic and if so returns a permutation $\sigma$ such that $\langle \alpha,\beta \rangle = \langle \sigma \rangle$ and 
\[\lcm(\ord(\alpha),\ord(\beta)) = \ord(\sigma).\]
\end{lemma}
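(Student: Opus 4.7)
The plan is to first reduce to the abelian case and then explicitly build a candidate generator. If $\alpha\beta \neq \beta\alpha$, reject immediately, since a cyclic group is abelian. Otherwise, read off $a := \ord(\alpha)$ and $b := \ord(\beta)$ as the $\lcm$ of the respective cycle lengths, and set $L := \lcm(a,b)$. A key observation is that every prime $p$ dividing $L$ must divide some cycle length of $\alpha$ or $\beta$, so $p \leq n$; hence the complete prime factorization of $L$ can be obtained in polynomial time by trial division on the (at most $2n$) cycle lengths, even though $L$ itself may be exponentially large.

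Using this factorization, I would partition $L = a'\cdot b'$ with $a' \mid a$, $b' \mid b$ and $\gcd(a',b')=1$ by assigning, for each prime $p \mid L$, the entire $p$-part of $L$ to $a'$ when $v_p(a)\ge v_p(b)$, and to $b'$ otherwise. Define $\sigma := \alpha^{a/a'}\,\beta^{b/b'}$, computed by iterated squaring in $S_n$. Since $\alpha^{a/a'}$ and $\beta^{b/b'}$ commute and have coprime orders $a'$ and $b'$, their product satisfies $\ord(\sigma) = a'b' = L$, as required by the statement. It then remains to decide whether $\sigma$ already generates $\langle\alpha,\beta\rangle$, which is the case if and only if both $\alpha \in \langle\sigma\rangle$ and $\beta\in\langle\sigma\rangle$; this can be tested in polynomial time via the permutation-group membership algorithm of \cite{FurstHL80} applied to the cyclic group $\langle\sigma\rangle$.

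Correctness in the negative direction is the subtle part: if $\alpha\notin\langle\sigma\rangle$ or $\beta\notin\langle\sigma\rangle$, I need to rule out that $\langle\alpha,\beta\rangle$ is nevertheless cyclic. Suppose for contradiction that $\langle\alpha,\beta\rangle = \langle\tau\rangle$ for some $\tau$ of order $N$. Then $a\mid N$ and $b\mid N$, so $L\mid N$, and the unique subgroup of $\langle\tau\rangle$ of order $L$, namely $\langle\tau^{N/L}\rangle$, consists precisely of the elements whose order divides $L$; in particular it contains both $\alpha$ and $\beta$. This forces $\langle\alpha,\beta\rangle = \langle\tau^{N/L}\rangle$, which has order $L$ and is therefore generated by any of its order-$L$ elements, including $\sigma$---contradicting the failure of the membership test. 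The main obstacle looks at first like the need to factor the potentially exponentially large integer $L$, but it dissolves through the cycle-length bound $p \leq n$; the remainder is bookkeeping around the subgroup lattice of a cyclic group together with the well-known polynomial-time permutation-group membership test.
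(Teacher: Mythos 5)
Your proposal is correct and follows essentially the same route as the paper: factor the orders over primes bounded by $n$, split $\lcm(\ord\alpha,\ord\beta)$ into coprime parts $a'\mid\ord\alpha$ and $b'\mid\ord\beta$, form the candidate generator $\sigma=\alpha^{\ord(\alpha)/a'}\beta^{\ord(\beta)/b'}$ (which is exactly the paper's $\gamma=\alpha^r\beta^s$ up to notation), and finish with a membership test via \cite{FurstHL80}. The only difference is presentational: you spell out the subgroup-lattice argument showing that failure of the membership test certifies non-cyclicity, whereas the paper cites the same fact implicitly through $\ord(\sigma)=|\langle\alpha,\beta\rangle|=\lcm(\ord\alpha,\ord\beta)$.
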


\begin{proof}
On input $\alpha,\beta \in S_n$, the algorithm computes the following data:
\begin{enumerate}
\item  $\ord(\alpha)$ and $\ord(\beta)$,
\item the prime factorizations $\ord(\alpha) = p_1^{a_1} \cdots p_m^{a_m}$ and $\ord(\beta) = p_1^{b_1} \cdots p_m^{b_m}$
(we assume here that $a_i$ or $b_i$ is non-zero for every $i \in [1,m]$),
\item the numbers $r = \prod_{i=1}^m p_i^{x_i}$ and $s = \prod_{i=1}^m p_i^{y_i}$, where
for all $i \in [1,m]$ we have
\[
x_i = \begin{cases}0 & \text{if } a_i \geq b_i\\ a_i  & \text{if } a_i < b_i\end{cases} \text{ and }
y_i = \begin{cases}0 & \text{if } b_i > a_i\\ b_i  & \text{if } b_i \leq a_i\end{cases}
\]
\item the permutations $\alpha' = \alpha^r$, $\beta' = \beta^s$, and $\gamma = \alpha'\beta'$.
\end{enumerate}
If $\langle \alpha,\beta \rangle = \langle \gamma \rangle$ then the algorithm returns $\gamma$, otherwise it returns ``not cyclic''.

We first show that the algorithm is correct, then we argue that it runs in polynomial time.
Clearly, if $\langle \alpha,\beta \rangle$ is not cyclic then the algorithm will return ``not cyclic''.
Vice versa suppose there is a permutation $\sigma$ such that $\langle \alpha,\beta \rangle = \langle \sigma \rangle$. Then we have $\ord(\sigma) = |\langle \alpha,\beta \rangle| = \lcm(\ord(\alpha),\ord(\beta))$, where $\lcm$ denotes the least common multiple. Now let $\alpha'$, $\beta'$ and $\gamma$ be as defined in the algorithm. 
Then $\ord(\alpha') = \ord(\alpha)/r$ and $\ord(\beta') = \ord(\beta)/s$ and these numbers  
are coprime. We obtain $\ord(\gamma) = \ord(\alpha')\ord(\beta') = \lcm(\ord(\alpha),\ord(\beta)) = \ord(\sigma)$. Moreover $\gamma \in \langle \alpha,\beta \rangle = \langle \sigma \rangle$. By this it follows that $\gamma$ is also a generator of $\langle \sigma \rangle$, i.e., $\langle \gamma \rangle = \langle \alpha,\beta \rangle$.
Hence, checking $\langle \alpha,\beta \rangle = \langle \gamma \rangle$ succeeds and the algorithm correctly returns $\gamma$.

We now argue that the algorithm runs in polynomial time. Recall that $n$ (the degree of the permutations) is given in unary notation.
To compute the order of a permutation we simply have to compute the length of the disjoint cycles and compute the least common multiple of the lengths which can be computed in polynomial time. By Lagrange's theorem both $\ord(\alpha)$ and $\ord(\beta)$ divide $n!$ and thus $p_i \leq n$ for $i \in [1,m]$. Moreover $m \leq n$ because every prime $p_i$ is bounded by $n$. Furthermore the exponents $a_i$ and $b_i$ are bounded by $\mathcal{O}(\log n)$ because every cycle has length at most $n$. Hence the prime factorizations of $\ord(\alpha)$ and $\ord(\beta)$ can be computed in polynomial time. Clearly $\alpha',\beta'$ and $\gamma$ can be computed in polynomial time. Finally, in order to check $\langle \alpha,\beta \rangle = \langle \gamma \rangle$, it suffices to check whether $\alpha \in \langle \gamma \rangle$ and $\beta \in \langle \gamma \rangle$.
This can be done in polynomial time by  \cite{FurstHL80}.
\end{proof}

\begin{lemma}\label{lemmacyclicgroup2}
There is an algorithm that given a list of permutations $\pi_1, \ldots, \pi_m \in S_n$ checks in polynomial time whether $\langle \pi_1, \ldots, \pi_m \rangle$ is cyclic and if so returns a permutation $\sigma$ such that $\langle \pi_1, \ldots, \pi_m \rangle = \langle \sigma \rangle$.
\end{lemma}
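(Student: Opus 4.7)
The plan is to reduce the $m$-generator case to the two-generator case of Lemma~\ref{lemmacyclicgroup} by a straightforward induction on the number of generators. I initialize $\sigma := \pi_1$, and then for $i = 2, \ldots, m$ I invoke the algorithm of Lemma~\ref{lemmacyclicgroup} on the pair $(\sigma, \pi_i)$. If it reports ``not cyclic'', the overall algorithm aborts and returns ``not cyclic''; otherwise it replaces $\sigma$ by the permutation $\sigma'$ produced by the pairwise algorithm, which satisfies $\langle \sigma' \rangle = \langle \sigma, \pi_i \rangle$. After processing every $\pi_i$ successfully, the algorithm returns the current value of $\sigma$.

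For correctness I prove by induction on $i$ the invariant that after the $i$-th step $\langle \sigma \rangle = \langle \pi_1, \ldots, \pi_i \rangle$. The base case $i = 1$ is immediate and the inductive step is exactly the guarantee provided by Lemma~\ref{lemmacyclicgroup}. If at some step the pairwise test fails, then $\langle \pi_1, \ldots, \pi_i \rangle = \langle \sigma, \pi_i \rangle$ is not cyclic; since every subgroup of a cyclic group is cyclic, $\langle \pi_1, \ldots, \pi_m \rangle$ cannot be cyclic either, so returning ``not cyclic'' is justified. Conversely, if the algorithm never aborts, the final $\sigma$ generates $\langle \pi_1, \ldots, \pi_m \rangle$.

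For the runtime, note that each intermediate $\sigma$ is a permutation of $[1,n]$, whose encoding has size $O(n \log n)$ and therefore does not blow up across iterations. The algorithm performs $m-1$ invocations of the pairwise procedure from Lemma~\ref{lemmacyclicgroup}, each polynomial in $n$, so the total running time is polynomial in the input size. There is no real obstacle beyond checking that the representation size of $\sigma$ stays bounded between iterations; once the two-generator case is in hand, the $m$-generator case is essentially a routine inductive extension.
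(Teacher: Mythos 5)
Your proposal is correct and uses essentially the same inductive/iterative reduction to the two-generator case (Lemma~\ref{lemmacyclicgroup}) as the paper's proof, including the same justification that a failure at an intermediate step certifies non-cyclicity because subgroups of cyclic groups are cyclic. The paper phrases the induction recursively while you phrase it as an explicit loop, but the argument is the same.
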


\begin{proof}
If $\langle \pi_1, \ldots, \pi_m \rangle$ is cyclic then also $\langle \pi_1, \ldots, \pi_l \rangle$ is cyclic for every $l \leq m$.
Therefore, using Lemma~\ref{lemmacyclicgroup} we can first check whether $\langle \pi_1, \pi_2 \rangle$ is cyclic. If this is not the case, then $\langle \pi_1, \ldots, \pi_m \rangle$
is not cyclic. Otherwise, Lemma~\ref{lemmacyclicgroup} returns a permutation $\pi_{1,2}$ such that $\langle \pi_1, \pi_2 \rangle = \langle \pi_{1,2} \rangle$.
We then recursively check whether $\langle \pi_{1,2}, \pi_3, \ldots, \pi_m \rangle$ is cyclic.
\end{proof}

\end{document}